\documentclass[12pt]{article}
\usepackage{graphicx} % Required for inserting images
\usepackage{amsfonts}
\usepackage{amsmath}
\usepackage{amsthm}
\usepackage{amssymb}
\usepackage[hidelinks]{hyperref}
\usepackage{thmtools}
\usepackage{mathtools}
\usepackage{cases}
\usepackage{xcolor}
\usepackage{comment}
\usepackage{enumitem}
\usepackage[normalem]{ulem}
\usepackage[utf8]{inputenc}
\usepackage{tikz}
\usepackage{here}
\everymath{\displaystyle}
\usepackage{mathabx}
\numberwithin{equation}{section}

\usepackage{hyperref,cleveref} 
%%%%%%%%%%%%%%%%%% Pour les marges %%%%%%%%%%%%%%%%%%%%%%%%%%
\usepackage[left=2cm,right=2cm,top=2.5cm,bottom=2.5cm]{geometry}

%%%%%%%%%%%%%%%%%%%%%%% ESpace entre les lignes %%%%%%%%%%
\usepackage{setspace}
%\onehalfspacing
%\doublespacing

%\newcommand{\hmr}[1]{\textcolor{black}{#1}}
%\newcommand{\hmb}[1]{\textcolor{white}{#1}}

%%%%%%% This way of defining newcommand displan the indicies bold sometimes%%%%%%%%
% \newcommand{\bU}{{\bf U}}
% \newcommand{\tbU}{\widetilde {\bf U}}
% \newcommand{\bu}{{\bf u}}
% \newcommand{\tbu}{\widetilde {\bf u}}
% \newcommand{\bv}{{\bf v}}
% \newcommand{\tbv}{\widetilde {\bf v}}

\newcommand{\bU}{\mathbf{U}}
\newcommand{\tbU}{\widetilde{\mathbf{U}}}
\newcommand{\bu}{\mathbf{u}}
\newcommand{\tbu}{\widetilde{\mathbf{u}}}
\newcommand{\bv}{\mathbf{v}}
\newcommand{\tbv}{\widetilde{\mathbf{v}}}
\newcommand{\boeta}{\boldsymbol{\eta}}

\newcommand{\bzeta}{\boldsymbol{\zeta}}

 \newcommand{\be}{\begin{equation}}
\newcommand{\ee}{\end{equation}}
\newcommand{\eps}{\varepsilon}

\newcommand{\cH}{\mathcal{H}}

\newcommand{\cA}{\mathcal{A}}

\newcommand{\mC}{\mathbb{C}}
\newcommand{\mR}{\mathbb{R}}

\newtheorem{theorem}{Theorem}[section]
\newtheorem{lemma}[theorem]{Lemma}
\newtheorem{remark}{Remark}
\newtheorem{proposition}[theorem]{Proposition}
\newcommand{\n}[1]{\|#1\|_{\infty}}

\newcommand{\D}{\mathcal{D}(\mathcal{A})}

\title{One dimensional wave equation with in-domain localized damping and Wentzell boundary conditions}
\author{
A.~Dahmani\footnote{Aalen University, Research Center for Complex Systems, Aalen,
Germany (\texttt{abdelhakim.dahmani@hs-aalen.de}).}, 
Y.~Chitour\footnote{Universit\'e Paris-Saclay, CNRS, CentraleSup\'elec, Laboratoire des signaux et syst\`emes, 91190 Gif-sur-Yvette, France (\texttt{yacine.chitour@l2s.centralesupelec.fr}).},
H.-M.~Nguyen\footnote{Sorbonne Universit\'e, Universits\'e Paris Cit\'e, CNRS, INRIA, Laboratoire Jacques-Louis Lions, LJLL, F-75005 Paris, France (\texttt{hoai-minh.nguyen@sorbonne-universite.fr}).},
and C.~Roman\footnote{Aix-Marseille Universit\'e, Laboratoire informatique et syst\`emes, Marseille, France (\texttt{christophe.roman@lis-lab.fr}).}
}

\date{\today}

%\author[Y. Chitour]{Yacine Chitour}
%        \address[Y. Chitour]{Laboratoire des signaux et syst\`emes, \newline\indent
%        Universit\'e Paris Saclay, \newline\indent
%        France}
%\email{yacine.chitour@l2s.centralesupelc.fr }
%
%\author[H.-M. Nguyen]{Hoai-Minh Nguyen}
%\address[H.-M. Nguyen]{Laboratoire Jacques Louis Lions, \newline\indent
%Sorbonne Universit\'e\newline\indent
%Paris, France}
%\email{hoai-minh.nguyen@sorbonne-universite.fr}
%
%\author[C. Roman]{Christophe Roman}
%\address[H.-M. Nguyen]{Laboratoire informatique et syst\`emes, \newline\indent
%Aix-Marseille Universit\'e\newline\indent, Marseille, France}
%\email{christophe.roman@lis-lab.fr}

\begin{document}
\maketitle
\begin{abstract}
    This paper is devoted to the exponential stability for one-dimensional linear wave equations with in-domain localized damping and several types of Wentzell (or dynamic) boundary conditions. In a quite general boundary setting, we establish the exponential decay of solutions towards the corresponding steady states. The results are obtained either by the multiplier method or spectral analysis in an $L^2$-functional framework, and then with input-to-state technics in an $L^p$-functional framework for $p \in (2,\infty)$.

\end{abstract}

\tableofcontents
\section{Introduction}
In recent decades, Partial Differential Equations (PDEs) have been increasingly used in modeling because of their ability to represent complex phenomena. Although they often yield more precise results than other methods, they are challenging and delicate to handle due to their infinite-dimensional nature.

Among the many areas where PDEs are heavily utilized, one particularly interesting application is the damping and suppression of vibrations. This area has generated a substantial body of research over the past decades, see e.g. \cite{russell1969linear, dafermos_wave_1970,chen1982mathematical, huang1988mathematical}

Most vibration phenomena are modeled using the wave equation and significant research has been devoted to investigate its well-posedness, stability, controllability, and observability, among other aspects, see e.g. \cite{bardos1992sharp,lions1988controlabilite, chen1981note, haraux2018nonlinear,komornik1994exact,zuazua2024exact,MR2169126} and references therein. Recently, increasing attention has been directed towards the wave equation subject to Wentzell boundary conditions from researchers in both engineering and mathematics due to its usefulness in real-world problems and the technical mathematical challenges it presents, see, e.g., (\cite{mercier2018indirect, fourrier2013regularity, mugnolo2011damped, buffe2017stabilization, cavalcanti2012geo, cavalcanti2012wave,chitour2024exponential}) and the references therein. 

The one-dimensional $(1D)$ case, in particular, has been extensively studied since it has been used in modelling of many applications.  For instance, the 1D wave equation with Wentzell boundary conditions has been utilized in \cite{conrad1998strong,d2000exponential} to analyze the stability of an overhead crane, in \cite{bohm2014modeling} to model a hanging cable immersed in water, and in \cite{vanspranghe2021velocity} to investigate drilling torsional vibrations. Further examples can be found in \cite{lee1987stabilization, morgul1994stabilization, terrand2019regulation, roman2018boundary, roman2022pi} and the references therein.

In \cite{chitour2023lyapunov}, one deals with the $(1D)$ wave equation on $[0,1]$ with Wentzell boundary conditions at both ends, considering constant disturbances both within the domain and at the boundary. Assuming an everywhere active damping, they achieved regulation with an exponential convergence rate using proportional-integral control. Their analysis relied on identifying an appropriate Lyapunov functional besides some further analysis. In addition, they showed the exponential stability of some related systems among which systems with no integral action on the dynamics boundary condition.

It is important to mention that boundary damping alone is not sufficient to achieve exponential stability, for the case of a Wentzell boundary condition at one end and Dirichlet at the other end, see \cite{lee1987stabilization} (the authors are unable to find this reference, however, its results were announced in \cite{morgul1994stabilization}, in addition, an available proof can be found in  \cite[Sec.4.]{li2017boundary}), for the case of Wentzell boundary conditions at both ends,  see \cite{conrad1998strong}. Therefore, internal damping is required to obtain exponential stability. This has been addressed in \cite{chitour2023lyapunov} where the damping is used in the entire domain, as explained in the previous paragraph. However, this approach may not be optimal, as suggested by the sufficiency and necessity of the geometric control condition (GCC) established in \cite{bardos1992sharp}. Notice that the GCC in this setting is equivalent to the fact that the damping is active on some non empty subinterval of the domain of the space variable. Aiming for an exponential stability result and motivated by the fact that engineers strive to reduce the human intervention in models, we would like to resume the work conducted in \cite{chitour2023lyapunov} while considering a localized damping (Assumption ($A2$) in the sequal to be compared to \cite[Assumption ($h_2$)]{chitour2023lyapunov}) and we will also consider the $L^p$ framework for $p\in (2,\infty)$.

More precisely, we consider the following problem
\begin{subnumcases}{\label{sys_main_int}}
u_{tt}(t, x) - (a (x)u_x(t, x))_x  = -q(x) u_t(t, x) \quad \mbox{ for } (t,x) \in \mathbb{R}_+ \times (0, 1), \label{sys_main_int:1} \\
u_t(t,1)=\eta_1(t) \quad \mbox{ for } t\geq 0, \label{eq:eta_1} \\
u_t(t,0)=\zeta_1(t) \quad\mbox{ for }  t\geq 0, \\
\dot\eta_1(t)=-\alpha_1\eta_1(t)-\alpha_2\eta_2(t)-\beta_1u_x(t,1) \quad \mbox{ for } t\geq 0, \label{eq:xi1}\\
\dot\eta_2(t)=\eta_1(t) \quad \mbox{ for } t\geq 0, \label{eq:eta}\\
\dot\zeta_1(t)=-\gamma_1\zeta_1(t)+\mu_1u_x(t,0) \quad \mbox{ for } t\geq 0,\label{eq:xi0}\\
u(0, \cdot) = u_0, \quad u_t (0, \cdot) = u_1 \quad \mbox{ in } (0, 1), \\
\eta_1(0)=\eta_{1,0},\quad \eta_2(0)=\eta_{2,0}, \quad \zeta_1(0)=\zeta_{1,0},  
\end{subnumcases}
where $\big(u_0(\cdot), u_1(\cdot), \eta_{1,0}, \eta_{2, 0}, \zeta_{1,0} \big) $ is the initial data belonging to an appropriate space given later.
Here are the assumptions on the coefficients $a, q, \alpha_1, \alpha_2, \beta_1, \gamma_1$, and  $\mu_1$:

\begin{enumerate}
\item $a$ is a Lipschitz positive function defined in $(0, 1)$ bounded below and above by a positive constant, i.e.,  
\[
(A1)\quad a \in W^{1,\infty}(0,1) \textrm{ and there exist }
\underline{a},\ \overline{a}>0 \textrm{ s.t. }\underline{a}\leq a(\cdot) \leq \overline{a}
\textrm{ a.e. in }[0,1]. 
\]
\item $q$ is a bounded non-negative function defined in $[0,1]$  and is bounded below by a positive constant on some open interval $\omega$ of $(0, 1)$, i.e., 
\[
(A2)\quad q \in L^\infty(0, 1),\ \  q \ge 0\textrm{ a.e. in }(0, 1),\ 
    q(x)\in [\underline{q},\overline{q}],\quad  \mbox{a.e. in } \omega \subset  [0,1],
\]
for some  $\underline{q}, \overline{q}>0$
\item $\alpha_1$, $\alpha_2$, $\beta_1$, $\gamma_1,\mu_1$ are constants such that  
\[
(A3)\quad\alpha_1, \alpha_2, \beta_1, \gamma_1, \mu_1 > 0.
\]
\end{enumerate}

We have, by \eqref{eq:eta_1} and \eqref{eq:eta}, 
\be
\frac{d}{dt} \big(u(t, 1) - \eta_2(t) \big) = \eta_1(t) - \eta_2'(t) = 0. 
\ee
It follows that $u(t, 1) - \eta_2(t)$ is constant along every trajectory. Denote 
\begin{equation}\label{u_*}
    u_*=u_0(1)-\eta_{2,0}.  
\end{equation}
Then 
\be \label{prop-u_*}
u(t,1)-\eta_2(t) = u_* \mbox{ for } t \ge 0. 
\ee

The paper aims to establish exponential decay of the energy of the solutions and prove that the solutions converge to the system's attractor, which is $(u_*, 0, 0, 0)$. 

This paper is organized as follows: In Section 2, we define the energy functional associated with the problem in the $L^p$-functional framework for $1 \le p < + \infty$ and state the main results of the paper. Section 3 is devoted to establishing the exponential decay in the $L^2$-setting. Two approaches are proposed: one is based on the multiplier technique, and one is based on the spectral theory of a strongly continuous semigroup.
In Section 4, we discuss other systems with Wentzell boundary conditions as considered in \cite{chitour2023lyapunov} and we demonstrate their exponential stability. Finally, in Section 5, we establish the exponential decay in the $L^p$-setting for $p > 2$.

\section{Statement of the main results}
We first introduce the energy of the solution of the system, for $1 \le p < + \infty$, 
\begin{align}\label{def_E}
    E_p(t)=E_{p, i} (t)+E_{p, b}(t),
\end{align}
where $E_{p, i}$ and $E_{p, b}$ are respectively the energy related to the interior and to the boundary of the domain and are defined by  
\begin{align}
    E_{p, i}(t)=\frac{1}{p}\int_0^1 (|u_t(t, x)|^p+a(x) |u_x(t, x)|^p) \, dx,
\end{align}
\begin{align}
    E_{p, b}(t)= \frac{a(1)}{p\beta_1} |\eta_1(t)|^p+\frac{a(1) \alpha_2}{p\beta_1} |\eta_2(t)|^p+\frac{a(0)}{p\mu_1} |\zeta_1(t)|^p.  
\end{align}
When $p=2$, we simply denote $E_p$, $E_{p, i}$, and $E_{p, b}$ by $E$, $E_{i}$, and $E_{b}$ for notational ease.

Simple computations show that, for $p=2$, 
\begin{align}\label{ELocdot}
\dot{E}(t) = -\int_0^1 qu_t^2 \, dx-\frac{a(1)\alpha_1}{\beta_1}\eta_1^2-
\frac{a(0)\gamma_1}{\mu_1}\zeta_1^2.
\end{align}
Thus $E(t) = E_2(t)$ is a decreasing function for $t \ge 0$.

It is known that for $p=2$ and for $(u_0, u_1, \eta_{1, 0}, \eta_{2, 0}, \xi_{1, 0}) \in H^1(0, 1) \times L^2 (0, 1) \times \mR^3$, there exists a unique weak solution (in the sense given by a strongly continuous semigroup)
\begin{equation*}
    (u, \eta_1, \eta_2, \zeta_1) \in X \times \big((C[0, + \infty);\mathbb{R}) \big)^3
\end{equation*}
where 
$$
X = X_2 = C^0([0, +\infty); H^1(0,1)) \cap C^1([0,+\infty); L^2(0,1))
$$
(see, e.g., \Cref{wellposedness} in \Cref{sec:frequency}). 

\medskip 
Here is the first main result of the paper concerning the case $p=2$.

\begin{theorem}\label{th:mainth} Let $p=2$ and assume that $(A1)$, $(A2)$, and $(A3)$ hold true. Then there exist two positive constants $M$ and  $\nu$ such that
for every initial condition $(u_0, u_1, \eta_{1, 0}, \eta_{2, 0}, \xi_{1, 0}) \in H^1(0, 1) \times L^2(0, 1) \times \mR^3$, it holds for the corresponding unique weak solution of \eqref{sys_main_int} that, for $t \ge 0$, 
\begin{align}\label{th:mainth-cl1}
E(t)\leq M E(0)e^{- \nu t}
\end{align}
and 
\be \label{th:mainth-cl2}
\max_{x \in [0,1]} |u(t,x)-u_*|^2  \le M E(0)e^{- \nu t},
\ee
where $u_*$ is defined in \eqref{u_*}.
\end{theorem}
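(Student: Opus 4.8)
The plan is to prove the energy bound \eqref{th:mainth-cl1} first and then deduce the pointwise bound \eqref{th:mainth-cl2} from it. For the deduction I would use the conservation law \eqref{prop-u_*}, which gives $\eta_2(t) = u(t,1) - u_*$. Writing $u(t,x) - u_* = \eta_2(t) - \int_x^1 u_x(t,s)\,ds$ and applying the Cauchy--Schwarz inequality yields, for every $x \in [0,1]$,
\[
|u(t,x)-u_*|^2 \le 2\,\eta_2(t)^2 + 2\int_0^1 u_x(t,s)^2\,ds \le C\,E(t),
\]
where $(A1)$ and the presence of the $\eta_2^2$-term in $E_b$ are used; combined with \eqref{th:mainth-cl1} this gives \eqref{th:mainth-cl2}. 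Thus the whole weight of the theorem lies in \eqref{th:mainth-cl1}.

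For \eqref{th:mainth-cl1} I would use the multiplier method to build a Lyapunov functional $\mathcal{L}(t) = E(t) + \epsilon\,\Psi(t)$, with $\epsilon>0$ small and $\Psi$ a combination of a spatial multiplier and boundary correction terms, satisfying $c_1 E \le \mathcal{L} \le c_2 E$ together with $\dot{\mathcal{L}} \le -c_3 E$; a Gr\"onwall argument then yields \eqref{th:mainth-cl1}. The dissipation identity \eqref{ELocdot} only controls $\int_0^1 q u_t^2\,dx$ (bounded below by $u_t^2$ on $\omega$ only, by $(A2)$) together with the boundary velocities $\eta_1^2 = u_t(t,1)^2$ and $\zeta_1^2 = u_t(t,0)^2$, so the role of $\Psi$ is to manufacture the missing negative contributions $-\int_0^1 a u_x^2\,dx$, $-\int_0^1 u_t^2\,dx$ on $(0,1)\setminus\omega$, and $-\eta_2^2$.

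The core building block is the spatial multiplier $\int_0^1 m(x)\,u_t\,u_x\,dx$: differentiating along \eqref{sys_main_int:1} and integrating by parts produces the interior term $\tfrac12\int_0^1 m'(u_t^2 + a u_x^2)\,dx$, a variable-coefficient remainder $-\tfrac12\int_0^1 a'\,m\,u_x^2\,dx$ (finite since $a\in W^{1,\infty}$), the damping cross term $-\int_0^1 q\,u_t\,m\,u_x\,dx$, and boundary contributions at $x=0,1$. To convert the velocity control available only on $\omega$ into control on all of $(0,1)$, I would add a functional of the form $\int_0^1 \chi(x)\,u_t\,(u-u_*)\,dx$ (using $u-u_*$, not $u$, to keep $|\Psi|\le CE$), whose derivative trades $\int \chi u_t^2$ for $\int \chi a u_x^2$ up to lower-order terms absorbed through Young's inequality by the dissipation on $\omega$. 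Control of $\eta_2^2$ would come from a boundary functional such as $\eta_1\eta_2$, whose derivative contains $-\alpha_2\eta_2^2$ by \eqref{eq:xi1}--\eqref{eq:eta}.

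The main obstacle is twofold and characteristic of localized interior damping with Wentzell boundaries. First, the positive weight needed to recover the full interior energy ($m'>0$) is incompatible with the sign of $m$ that would render the boundary gradient terms harmless, and those traces $u_x(t,0),\,u_x(t,1)$ are not controlled by $E$: by \eqref{eq:xi1}, $u_x(t,1)$ involves $\dot\eta_1$ (equivalently $u_{tt}(t,1)$), the dynamic part of the boundary, and similarly at $x=0$ through \eqref{eq:xi0}. This forces $m$ to vanish at the endpoints, killing the traces at the price of a sign-indefinite $m'$, so that a single multiplier no longer dominates the whole energy. Second, and relatedly, the damping acts only on $\omega$. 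I expect to resolve both by localizing with cutoffs adapted to $\omega$, absorbing the wrong-sign and cross terms by Young's inequality against the dissipation and a small fraction of $\int_0^1 a u_x^2\,dx$, and removing the remaining lower-order terms by a compactness--unique-continuation argument; since $E$ is non-increasing, the resulting observability estimate upgrades to \eqref{th:mainth-cl1} by the standard energy iteration. The alternative announced in the introduction --- the frequency-domain criterion of Gearhart--Huang--Pr\"uss, verifying that $i\mathbb{R}$ lies in the resolvent set of the generator with a uniformly bounded resolvent --- reduces, in its contradiction step, to exactly the same propagation of the vanishing dissipation from $\omega$ to $(0,1)$, which is the heart of the matter.
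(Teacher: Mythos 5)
Your reduction of \eqref{th:mainth-cl2} to \eqref{th:mainth-cl1} is exactly the paper's, and your multiplier plan matches the paper's structure remarkably closely: the paper's Lemma~\ref{lem:S0} uses the multiplier $x\psi a u_x$ with $\psi$ vanishing near $x=1$ (the factor $x$ kills the trace at $x=0$), confirming your diagnosis that the spatial weight must vanish at both endpoints because $u_x(t,0)$, $u_x(t,1)$ are tied to $\dot\zeta_1$, $\dot\eta_1$ and are not dominated by the dissipation \eqref{ELocdot}; and the paper's Lemma~\ref{lemmaS0} recovers $\eta_2^2$ by exactly the mechanism you sketch, since multiplying \eqref{eq:xi1} by $\eta_2$ (identity \eqref{eeq37}) is your $\eta_1\eta_2$ functional in disguise --- be aware, though, that this functional also generates the cross term $-\beta_1 u_x(t,1)\eta_2$, which you do not mention and which only cancels because it is paired with the boundary term of the interior multiplier $\phi(u-u_*)$ via $u(t,1)-u_*=\eta_2$. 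The genuine divergence is in the endgame. You propose to remove the remaining localized lower-order term by a compactness--unique-continuation argument; the paper stays constructive: it introduces the auxiliary elliptic function $v$ of \eqref{aeequ_v}, with boundary conditions $v_x(0)=0$ and $v_x(1)+\frac{\alpha_2}{\beta_1}v(1)=0$ tailored to the Wentzell ODEs, uses $v$ as a third multiplier (Lemma~\ref{lemma16}) to bound $\int_S^T\int_{Q_2}|u-u_*|^2$ by $\delta\int_S^T E + C_\delta E(S)$, and concludes from \eqref{atthispoint} via \cite[Theorem 8.1]{komornik1994exact}. Your route buys modularity and is viable in one dimension --- the unique continuation needed for the limit system ($u_t=0$ on $\omega$, $\eta_1=\zeta_1=0$ forcing $u\equiv u_*$) reduces to a Cauchy--Lipschitz ODE argument of precisely the kind the paper itself uses in Proposition~\ref{kerilambda-A} --- but it is nonconstructive, and it is the one step of your plan you have not actually carried out; note also that the compactness step must be run on the full state $(u,u_t,\eta_1,\eta_2,\zeta_1)$ in the hyperplane $\cH_0$ after normalizing $u_*=0$.

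Two smaller points. First, your framing is internally inconsistent: a Lyapunov functional with $\dot{\mathcal L}\le -c_3E$ plus Gr\"onwall cannot coexist with a compactness-uniqueness step, which only yields an integrated observability inequality of the form $\int_S^T E\,dt\le CE(S)$; your closing sentence correctly abandons the Gr\"onwall route for the standard energy iteration, which is the same mechanism as the paper's appeal to Komornik's theorem, so this is a presentational rather than mathematical flaw. Second, your remark about the frequency-domain alternative is only half right: the paper's resolvent bound (Proposition~\ref{pro-A}) is indeed proved by contradiction, but the contradiction is reached through multiplier identities (culminating in the choice $\varphi=e^{Mx}$), not through unique continuation, which enters only in the spectral statement $i\mathbb{R}\subset\rho(\mathcal{A})$ of Proposition~\ref{kerilambda-A}.
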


We present two approaches to prove \Cref{th:mainth} (see \Cref{sect-L2}). The first one relies on the use of multipliers, which, to the best of our knowledge, have not been previously utilized in the present context (Wentzell boundary conditions and localized damping). The second approach is based on the frequency domain analysis and more precisely on checking the assumptions of a theorem proved by Huang \cite{huang1985} and Prüss \cite{pruss1984}. The multiplier technique is also involved in this approach.

We next deal with the case $p>2$. In this case, we assume in addition that $a$ is constant on $(0, 1)$ and for notational ease this constant will be assumed to be $1$.
Given an initial condition $(u_0, u_1, \eta_{1, 0}, \eta_{2, 0}, \xi_{1, 0}) \in W^{2, p}(0, 1) \times W^{1, p}(0, 1) \times \mR^3$, one can show that (see \Cref{prop:existence-p} and \Cref{pro-WP}) that there exists a unique weak solution 
\begin{equation*}
    (u, \eta_1, \eta_2, \zeta_1) \in X_{p} \times \big(C^0([0, \infty];\mathbb{R}) \big)^3
\end{equation*}
of \eqref{sys_main_int}, where 
$$
X_{p} = C^0([0, + \infty); W^{1, p}(0,1)) \cap C^1([0, +\infty); L^p(0,1)),
$$
such that 
\be
\partial_t u, \partial_x u \in C([0, 1]; L^p(0, T)) \mbox{ for all } T>0. 
\ee

Here is the main result in the case $p>2$.

\begin{theorem} \label{thm-Lp} Let $2 < p < + \infty$ and assume that (A1), (A2), and (A3) hold true, and $a \equiv 1$ in $[0, 1]$. Then there exist two positive constants $M$ and $\nu$ such that, for every initial condition $(u_0, u_1, \eta_0, \eta_{2, 0}, \xi_0) \in W^{1, p}(0,1) \times L^p(0, 1) \times \mR^3$, it holds for the corresponding unique weak solution
of \eqref{sys_main_int} that, for $t \ge 0$, 
\be\label{thm-Lp-cl1}
E_p(t) \le M e^{- \nu t} E_p(0)
\ee
and 
\be\label{thm-Lp-cl2}
\max_{x \in [0,1]} |u(t,x)-u_*|^p  \le M E_p(0)e^{- \nu t},
\ee
where $u_*$ is defined in \eqref{u_*}.
\end{theorem}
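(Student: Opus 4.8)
The plan is to exploit that, when $a\equiv 1$, the interior equation \eqref{sys_main_int:1} diagonalizes in Riemann invariants, and then to propagate $L^p$ bounds along characteristics by an input-to-state (ISS) argument, feeding in the already-established $L^2$ decay of \Cref{th:mainth}. Since $p>2$ and the spatial domain is bounded, the initial data lies in $H^1(0,1)\times L^2(0,1)\times\mR^3$, so \Cref{th:mainth} applies; in particular $E(0)=E_2(0)<\infty$, and because the boundary part $E_b$ controls $|\eta_1|^2+|\eta_2|^2+|\zeta_1|^2$ (all coefficients being positive under (A1) and (A3)), the finite-dimensional states decay pointwise, $|\eta_1(t)|+|\eta_2(t)|+|\zeta_1(t)|\le C\,E(0)^{1/2}e^{-\nu t/2}$. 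These will serve as exponentially small boundary inputs. The traces $u_x(t,1)$, $u_x(t,0)$ driving \eqref{eq:xi1} and \eqref{eq:xi0} are well defined in $L^p_t$ by the regularity $\partial_t u,\partial_x u\in C([0,1];L^p(0,T))$ stated with well-posedness.

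Set $R=u_t+u_x$ and $S=u_t-u_x$. A direct computation gives the transport system $(\partial_t-\partial_x)R=-\tfrac{q}{2}(R+S)$ and $(\partial_t+\partial_x)S=-\tfrac{q}{2}(R+S)$, while the interior energy $E_{p,i}$ is comparable to $\int_0^1(|R|^p+|S|^p)\,dx$. The boundary relations $u_t(t,1)=\eta_1(t)$ and $u_t(t,0)=\zeta_1(t)$ become the reflection laws $R(t,1)=-S(t,1)+2\eta_1(t)$ and $S(t,0)=-R(t,0)+2\zeta_1(t)$; their homogeneous part is merely non-expansive (reflection coefficient of modulus one), and the inhomogeneity is the decaying input above. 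Hence the only mechanism producing a strict contraction is the interior damping $-\tfrac{q}{2}(R+S)$, active exactly on $\omega$ by (A2). Integrating along characteristics of speed one, one gets for $S$ moving right $\tfrac{d}{dt}S(t,x(t))=-\tfrac{q}{2}S-\tfrac{q}{2}R$, so $S$ picks up the genuine contraction factor $\exp\!\big(-\tfrac12\int q\big)\le e^{-\frac12\underline q\,|\omega|}<1$ on each crossing of $\omega$, plus a forcing term $-\tfrac{q}{2}R$ supported on $\omega$; the analogous statement holds for $R$ along leftward characteristics.

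The core of the argument is a window estimate: over any time interval of length $T_0=2$ (one round-trip time) every characteristic crosses $\omega$ at least once — the strong form of the GCC needed here — so that $\int_0^1(|R|^p+|S|^p)(t+T_0)\,dx\le\theta\int_0^1(|R|^p+|S|^p)(t)\,dx+Ce^{-\nu t}$ for some $\theta\in(0,1)$, using Minkowski's integral inequality and the measure-preserving change of variables along characteristics to pass to the $L^p_x$ norm. A discrete Grönwall iteration over consecutive windows then yields $E_{p,i}(t)\le M e^{-\nu t}E_p(0)$, which together with the exponential decay of the boundary states established above gives \eqref{thm-Lp-cl1}. Finally, for the pointwise statement, writing $u(t,x)-u_*=\big(u(t,x)-u(t,1)\big)+\big(u(t,1)-u_*\big)=-\int_x^1 u_x(t,\cdot)\,dy+\eta_2(t)$ via \eqref{prop-u_*}, Hölder's inequality gives $\max_{x}|u(t,x)-u_*|^p\le C\big(\|u_x(t,\cdot)\|_{L^p}^p+|\eta_2(t)|^p\big)\le C\,E_p(t)$, and \eqref{thm-Lp-cl1} concludes \eqref{thm-Lp-cl2}.

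The step I expect to be the main obstacle is the forcing term $-\tfrac{q}{2}R$ (respectively $-\tfrac{q}{2}S$) coupling the two invariants on $\omega$: unlike the case $p=2$, where the energy identity \eqref{ELocdot} disposes of this coupling globally, for $p\neq 2$ there is no Hilbert structure and one cannot simply diagonalize. The plan is to bound this coupling by Hölder over the short window and absorb it into the contraction $\theta<1$ arising from the two passages through $\omega$ (forward for $S$, backward for $R$), i.e. a small-gain argument on the window map; the $L^2$ estimate is used to guarantee that the residual coupling cannot defeat the net contraction. Making the window recursion close with an explicit $\theta\in(0,1)$, uniformly in the data, is the delicate point on which the whole $L^p$ argument rests.
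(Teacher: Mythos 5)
Your overall architecture is exactly that of the paper: pass to the Riemann invariants $R=u_t+u_x$, $S=u_t-u_x$ (your transport system and reflection laws $R(t,1)=-S(t,1)+2\eta_1(t)$, $S(t,0)=-R(t,0)+2\zeta_1(t)$ are correct), use the $p=2$ result of \Cref{th:mainth} to make $(\eta_1,\eta_2,\zeta_1)$ exponentially small boundary inputs, exploit $p>2$ (via $E_2(0)^{p/2}\le C E_p(0)$ on the bounded domain) to express their decay against $E_p(0)$, and close with the Hölder estimate $\max_x|u(t,x)-u_*|^p\le C(\|u_x(t,\cdot)\|_{L^p}^p+|\eta_2(t)|^p)$. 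All of that matches the paper's proof of \Cref{thm-Lp}. The genuine gap is in the one step you yourself flag as delicate: the window contraction for the \emph{homogeneous} damped system. Your claimed factor $\exp\bigl(-\tfrac12\int q\bigr)<1$ per crossing of $\omega$ only accounts for the diagonal term $-\tfrac q2 S$ in $\tfrac{d}{dt}S=-\tfrac q2 S-\tfrac q2 R$; the cross term $-\tfrac q2 R$ carries the \emph{same} coefficient $q/2$, so there is no smallness to run a small-gain argument on. Worse, the obstruction is not merely technical: on data with $R\approx -S$ (i.e.\ $u_t\approx 0$) along the portion of $\omega$ swept during the window, the damping $-\tfrac q2(R+S)$ nearly vanishes along \emph{both} characteristic families, so no uniform pointwise contraction factor $\theta<1$ exists for the window map. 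In block form the naive estimate gives a matrix $\begin{pmatrix}\kappa & O(1)\\ O(1) & \kappa\end{pmatrix}$ with $\kappa<1$ but order-one off-diagonal entries, whose norm can exceed $1$. The true mechanism is that the configuration $u_t\approx 0$ on $\omega$ cannot persist under the free dynamics — an observability/compactness-uniqueness ingredient — and invoking the $L^2$ decay to "absorb the residual coupling" does not supply it: Hölder bounds on the coupling mix $L^2$ and $L^p$ quantities with mismatched homogeneity in the data, so they cannot yield a $\theta<1$ uniform over the unit ball of $E_p$.

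The paper avoids proving this from scratch: the exponential $L^p$ decay of the homogeneous system \eqref{sys-W-Lp} is imported wholesale from \cite[Theorem 4.1]{chitour2024exponential}, and the new work in \Cref{lem-R} only treats the inhomogeneous boundary terms, by linearity with zero initial data, using the $L^p$ multiplier $\rho|\rho|^{p-2}$, the elementary inequality $|a+b|^p\le(1+\eps)|a|^p+C\eps^{1-p}|b|^p$ (\Cref{lem-ineq}), characteristics, and Gronwall, which produces the ISS bound with the growth factor $C e^{\eps t}/\eps^{p-1}$. Note also that this growth factor is why the paper's conclusion needs the two-step iteration over $[0,t]$ and $[t,2t]$ with a careful choice of $\eps_0$ and $T$ to extract $E_p(2T)\le\tfrac12 E_p(0)$; your "discrete Grönwall over consecutive windows" would need the same bookkeeping, but that part is routine. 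To repair your proof, either cite the homogeneous $L^p$ decay as the paper does, or prove an $L^p$ observability-type inequality for the damped transport pair — which is substantially more than Hölder on a short window.
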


The starting point of the proof of \Cref{thm-Lp} is based on the existence of the Riemann invariant for $u$, a component of the system, in the case $a \equiv 1$. 
As far as we know, it is not known that the energy in $L^p$-scale is decreasing for $p \neq 2$. To overcome this issue, our analysis is based on three ingredients. The first one is the exponential decay in the $L^p$-functional framework of the following system 
\be \label{sys-W-Lp}
\left\{\begin{array}{c}
u_{tt}(t, x) - u_{xx} (t, x)  = - q(x) u_t(t, x) \quad \mbox{ for } (t,x) \in \mathbb{R}_+ \times (0, 1),  \\[6pt]
u_t(t,1)=0 \quad  \mbox{ for } t \in \mR_+, \\[6pt]
u_t(t,0)=0 \quad  \mbox{ for } t \in \mR_+. 
\end{array} \right.
\ee 
The analysis of this problem follows the same lines as in the one given in \cite{chitour2024exponential}. The second ingredient is the estimate of the solution of \eqref{sys-W-Lp} in the $L^p$-scale with inhomogeneous boundary conditions, i.e., $u_t(t, 1)$ and $u_t(t, 0)$ are not supposed to be 0. The results obtained from these two ingredients are given in \Cref{lem-R}. The third ingredient is the use of these two estimates and the one in the $L^2$-functional framework mainly on $(\eta_1, \eta_2, \zeta_1)$ to derive the exponential decay. It is the place where we require the condition $p>2$.

\section{Analysis in the $L^2$-setting - Proof of \Cref{th:mainth}} \label{sect-L2}

This section is devoted to the proof of \Cref{th:mainth}. Two proofs using different approaches are given. The first proof which is based on the multipliers technique is given in the first subsection. The second proof is based on the spectral theory of the infinitesimal generator of a strongly continuous semigroup is given in the second subsection.

\subsection{Multipliers technique approach}
In this subsection, we establish certain inequalities which, together with \cite[Theorem 8.1, p. 103]{komornik1994exact}, allow us to conclude the exponential decay of the energy towards zero. These inequalities are derived using appropriate multipliers and the fact that the problem possesses strong (differentiable) solutions.

The second assertion of Theorem \ref{th:mainth} follows from the first one. Indeed, we have for every $t\geq 0$ that
\begin{equation}
    \begin{aligned}
\left|u(t, x)-u_*\right|^2 & \leq 2|u(t, x)-u(t, 1)|^2+2 \eta_2^2(t)  \\
& \leq 2 \int_0^1 u_x^2(t, x)\, dx+2 \eta_2^2(t) \\
& \leq \frac{4}{\underline{a}} E(t)+2 \eta_2^2(t)\\
& \leq C E(t).
\end{aligned}
\end{equation}

Following the standard strategy, the proof of the first assertion in Theorem \ref{th:mainth} is performed first for strong solutions and then extended by density to weak solutions. 

Before the actual proof of the main result, we need several definitions and lemmas, which are stated next. Let us consider $\zeta_2$ defined as 
\begin{equation}
     \zeta_2(t):=u(t,0)-u_*.
\end{equation}

According to the choice of $\omega=[1-\varepsilon_3,1]$, we choose $x_0=0$ as an observation point. We consider for $0<\varepsilon_0<\varepsilon_1<\varepsilon_2<\varepsilon_3$, the sets $Q_i=\left(1-\varepsilon_i, 1+\varepsilon_i\right)$, $i=0,1,2$, as well as three smooth non negative functions $\psi, \phi$ and $\varphi$ (and taking values in $[0,1]$ for $\phi$ and $\varphi$) defined below and depicted according to Figure~\ref{fig_smooth}.

\begin{equation*}
\left\{\begin{array} { l } 
%{ 0 \leq \psi \leq 1 , } \\
{ \psi = 0 \text { on } Q _ { 0 } , } \\
{ \psi = \frac{e^{Lx}-1}{\underline{a}Lx} \text { on } ( 0 , 1 ) \backslash Q _ { 1 } , }
\end{array} \quad \left\{\begin{array} { l } 
%{ 0 \leq \phi \leq 1 , } \\
{ \phi = 1 \text { on } Q _ { 1 } , } \\
{ \phi = 0 \text { on } ( 0 , 1 ) \backslash Q _ { 2 } , }
\end{array} \quad \left\{\begin{array}{l}
%0 \leq \varphi \leq 1, \\
\varphi=1 \text { on } Q_2 \cap(0,1), \\
\varphi=0 \text { on } \mathbb{R} \backslash \omega,
\end{array}\right.\right.\right.
\end{equation*}
where $L=\sup_{x\in[0,1]}\frac{\vert a'(x)\vert}{a(x)}$. The definition of $\psi$ yields after a simple computation that
\begin{equation}\label{eq:psi-L}
(ax\psi)_x\geq 1,\quad (x\psi)_x a\geq \frac{a}{\underline{a}} \geq 1,\quad \textrm{ on } 
[0 , 1]\backslash Q _ { 1 }.
\end{equation}

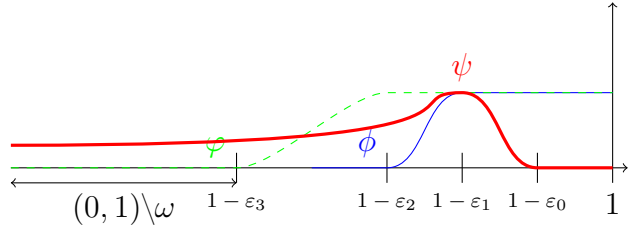
\begin{figure}[H]
\centering
\begin{tikzpicture}
\draw[->] (0,-.2)--(0,2.2) node[pos=0,below]{1};

\draw (-1,.2)--++(0,-.4) node[below]{\scriptsize$1-\varepsilon_0$};
\draw (-2,.2)--++(0,-.4) node[below]{\scriptsize$1-\varepsilon_1$};
\draw (-3,.2)--++(0,-.4) node[below]{\scriptsize$1-\varepsilon_2$};
\draw (-5,.2)--++(0,-.4) node[below]{\scriptsize$1-\varepsilon_3$};
\draw[->] (-8,0)--(.2,0) ;
\draw[blue] (-4,0)--(-3,0) node[above left]{$\phi$} .. controls (-2.5,0) and (-2.5,1) .. (-2,1) -- (0,1);
\draw[dashed,green] (-8,0)--(-5,0) node[above left]{$\varphi$} .. controls (-4.5,0) and (-3.5,1) .. (-3,1) -- (0,1);
\draw[<->] (-8,-.2)--(-5,-.2) node[pos=.5,below]{$(0,1)\setminus\omega$};
%\draw[red,line width=1.2pt] (-4,1)--(-2,1) node[above]{$\psi$}.. controls (-1.5,1 ) and (-1.5,0 ) .. (-1,0)--(0,0);
\draw[red,line width=1.2pt] (-8,.3).. controls (-1,.3 ) and (-3,1 ).. (-2,1) node[above]{$\psi$}.. controls (-1.5,1 ) and (-1.5,0 ) .. (-1,0)--(0,0);
\end{tikzpicture}
\caption{Illustration of the functions $\phi$, $\psi$, and $\varphi$.}
\label{fig_smooth}
\end{figure}

In all what follows $C$ denotes a generic positive constant that depends only on the data of the problem and also on $\psi, \phi$ and their derivatives.

The proof of the first assertion in Theorem \ref{th:mainth} is a consequence of a series of lemmas which are given below. 
\begin{lemma}\label{lem:S0}
Assume $(A1)$, $(A2)$, and $(A3)$ hold true. Then, there exists a positive constant $C$
 such that strong solutions of system \eqref{sys_main_int} satisfy, for every $0\leq S \leq T$, 
\begin{equation} \label{eslemma22}
\int_S^T  \Big( E_i(t)+ \eta_1^2(t) +\zeta_1^2(t)\Big) \, dt \leq C \underbrace{\int_S^T \int_{Q_1 \cap(0,1)}(u_t^2+au_x^2) \, dx  \, dt}_{\mathbf{S}_0} +CE(S).
\end{equation}
\end{lemma}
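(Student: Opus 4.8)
The plan is to separate the boundary unknowns $\eta_1,\zeta_1$ from the interior energy $E_i$. The boundary part is immediate from the dissipation law \eqref{ELocdot}: integrating it over $[S,T]$ and dropping $E(T)\ge 0$ gives
\[
\int_S^T\Big(\int_0^1 q u_t^2\,dx+\frac{a(1)\alpha_1}{\beta_1}\eta_1^2+\frac{a(0)\gamma_1}{\mu_1}\zeta_1^2\Big)dt=E(S)-E(T)\le E(S),
\]
so that $\int_S^T(\eta_1^2+\zeta_1^2)\,dt\le C E(S)$ and, since every term is nonnegative, $\int_S^T\int_0^1 qu_t^2\,dx\,dt\le E(S)$. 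It therefore remains to prove $\int_S^T E_i(t)\,dt\le C\,\mathbf{S}_0+C E(S)$.

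For the interior energy I would use the multiplier $2 m u_x$ with $m=x\psi$. By the explicit form of $\psi$ on $(0,1)\setminus Q_1$ one computes $m=(e^{Lx}-1)/(\underline{a}L)$, whence $m'=e^{Lx}/\underline{a}$, and, crucially, $m(0)=0=m(1)$, the value at $x=1$ vanishing because $\psi\equiv 0$ on $Q_0\ni 1$. Multiplying \eqref{sys_main_int:1} by $2mu_x$, integrating over $(S,T)\times(0,1)$ and integrating by parts in $x$ and $t$ produces the identity
\[
\int_S^T\!\!\int_0^1\!\big(m'u_t^2+(m'a-ma')u_x^2\big)dx\,dt=-\Big[\int_0^1 2m u_x u_t\,dx\Big]_S^T-\int_S^T\!\!\int_0^1\! 2q\,m\,u_xu_t\,dx\,dt,
\]
in which the spatial boundary contribution $\big[m(u_t^2+au_x^2)\big]_{x=0}^{x=1}$ disappears since $m$ vanishes at both endpoints; in particular the Wentzell data never enter this estimate. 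The point of the construction of $\psi$ is that both quadratic coefficients are bounded below by a positive constant on $(0,1)\setminus Q_1$: from \eqref{eq:psi-L} one has $m'=(x\psi)_x\ge 1/\overline{a}$, and a companion computation using $|a'|\le La$ gives $m'a-ma'\ge a/\underline{a}\ge 1$.

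The conclusion then follows by absorption. On $(0,1)\setminus Q_1$ the left-hand side dominates $c\int_S^T\int_{(0,1)\setminus Q_1}(u_t^2+au_x^2)$, while on $Q_1\cap(0,1)$ the coefficients $m'$ and $m'a-ma'$ are merely bounded, so that part of the integral is controlled by $C\mathbf{S}_0$. The time-boundary term is estimated by $|2mu_xu_t|\le C(u_t^2+au_x^2)$, hence by $C(E(S)+E(T))\le CE(S)$. The step I expect to be the most delicate is the damping cross term: here I would use that $q$ is supported in $\omega=[1-\varepsilon_3,1]$ while $m\equiv 0$ on $(1-\varepsilon_0,1)$, so the integrand lives only on $[1-\varepsilon_3,1-\varepsilon_0]$, and apply Young's inequality in the form $2q|m|\,|u_x||u_t|\le \epsilon\,a u_x^2+C\epsilon^{-1}q\,u_t^2$. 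The term $C\epsilon^{-1}q u_t^2$ integrates to $C\epsilon^{-1}E(S)$ by the dissipation bound of the first paragraph; the portion of $\epsilon a u_x^2$ sitting in $Q_1\cap(0,1)$ is $\le\epsilon\mathbf{S}_0$, and the portion in $(0,1)\setminus Q_1$ is absorbed into the left-hand side once $\epsilon$ is chosen small.

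Collecting these estimates yields $\int_S^T\int_{(0,1)\setminus Q_1}(u_t^2+au_x^2)\le C\mathbf{S}_0+CE(S)$; adding the trivial identity $\int_S^T\int_{Q_1\cap(0,1)}(u_t^2+au_x^2)=\mathbf{S}_0$ gives $\int_S^T E_i\,dt\le C\mathbf{S}_0+CE(S)$, which together with the boundary estimate of the first paragraph is exactly \eqref{eslemma22}. All manipulations are legitimate for the strong solutions assumed in the statement, the only genuinely non-routine ingredient being the weight $\psi$, designed to annihilate the endpoint terms and, despite the variable coefficient $a$, to keep both quadratic coefficients positive away from the observation window $Q_1$.
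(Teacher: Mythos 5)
Your proof is correct and follows essentially the same route as the paper's: the same weight $\psi$ and observation window $Q_1$, the dissipation law \eqref{ELocdot} to control $\int_S^T(\eta_1^2+\zeta_1^2)\,dt$ and $\int_S^T\int_0^1 qu_t^2\,dx\,dt$ by $E(S)$, vanishing endpoint terms, and Young's inequality with absorption for the damping cross term; the only deviation is cosmetic, namely you multiply by $2x\psi u_x$ where the paper uses $x\psi a u_x$, so your lower bound $m'a-ma'\ge a/\underline{a}$ (using $a'\le La$) is the mirror image of the paper's $(ax\psi)_x\ge 1$ in \eqref{eq:psi-L} (using $a'\ge -La$), both built into the design of $\psi$. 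One small inaccuracy worth fixing: assumption $(A2)$ does not say $q$ is supported in $\omega$, only that $q$ is bounded below on $\omega$, but this is harmless since your pointwise estimate $2q|m||u_x||u_t|\le \epsilon a u_x^2 + C\epsilon^{-1} q u_t^2$ holds everywhere (as $qm^2/a$ is bounded) and never actually uses the support claim.
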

\begin{proof}
Multiplying the first equation of (\ref{sys_main_int}) by $x \psi a u_x$ then integrating over $[S, T] \times[0,1]$ yields, 
\begin{equation}\label{eeq26}
\int_S^T \int_0^1 x \psi au_x\left(u_{t t}-(au_{x})_x+q(x) u_t\right) \mathrm{d} x \mathrm{~d} t=0 .
\end{equation}
Starting with $\int_S^T \int_0^1 x \psi a u_x u_{tt} \, dx  \, dt$. One has
\begin{equation}\label{eeq27}
    \int_S^T \int_0^1 x \psi a  u_x u_{tt} \, dx  \, dt= -\int_S^T \int_0^1 x \psi a  u_{tx} u_{t} \, dx  \, dt +\left[\int_0^1x \psi a u_xu_t \, dx \right]_S^T.
\end{equation}
We integrate by parts the first term on the right-hand side with respect to $x$ to get
 \begin{equation}\label{eeq28}
      -\int_S^T \int_0^1 x \psi a u_{tx} u_{t} \, dx  \, dt = \frac{1}{2} \int_S^T \int_0^1 (x \psi a)_x u_t^2 \, dx  \, dt.
 \end{equation}
After integrating by parts with respect to $x$, we have
\begin{equation}\label{eeq29_1}
    -\int_S^T \int_0^1 x \psi a u_x (au_{x})_x \, dx  \, dt= \int_S^T \int_0^1 (x \psi)_x (a u_x)^2 \, dx  \, dt + \int_S^T \int_0^1 x \psi a u_x  (a u_x)_x \, dx  \, dt,
\end{equation}
from which we deduce that 
\begin{equation}\label{eeq29}
    -\int_S^T \int_0^1 x \psi a u_x (au_{x})_x \, dx  \, dt=\frac{1}{2} \int_S^T \int_0^1 (x \psi)_x a^2 u_x^2 \, dx  \, dt.
\end{equation}

Combining (\ref{eeq26}), (\ref{eeq27}), (\ref{eeq28}) and (\ref{eeq29}) leads to

\begin{equation}\label{eq:good-psi}
  \int_S^T \int_0^1 \big[ (x \psi a )_x u_t^2  + (x \psi)_x a^2 u_x^2 \big]\, dx  \, dt   =
  -2\left[\int_0^1 x \psi a u_xu_t \, dx \right]_S^T -2 \int_S^T \int_0^1 x \psi a u_x q(x)u_t \, dx  \, dt.
\end{equation}
We cut the first integral according to the partition 
$[0,1]=([0,1]\setminus Q_1)\cup Q_1$. Using \eqref{eq:psi-L}, we have 
\[
 (x \psi a )_x u_t^2  + (x \psi)_x a^2 u_x^2\geq u_t^2  +a^2 u_x^2,
 \quad \textrm{ for } x\in [0,1]\setminus Q_1,
 \]
 and hence $ \int_S^T \int_{[0,1]\setminus Q_1} \big[ (x \psi a )_x u_t^2  + (x \psi)_x a^2 u_x^2 \big]\, dx  \, dt\geq \int_S^T \int_{[0,1]\setminus Q_1} (u_t^2+a^2 u_x^2)\, dx  \, dt$. 

Using the above and adding to \eqref{eq:good-psi} the integral $\int_S^T \int_{Q_1} (u_t^2+a^2 u_x^2)\, dx  \, dt$, we get that there exists a positive constant such that
\begin{equation}\label{eeq32}
\begin{aligned}
   \int_S^T E_i(t) \, dt \leq &\ C \underbrace{ \int_S^T \int_{ Q_1 \cap (0,1) } (u_t^2+au_x^2)\, dx  \, dt}_{\textbf{S}_0} +C\underbrace{\left|\left[\int_0^1 u_xu_t \, dx \right]_S^T\right|}_{\textbf{T}_1}\\ & + C\underbrace{\int_S^T \int_0^1 | aqu_xu_t| \, dx  \, dt}_{\textbf{T}_2}.
\end{aligned}
    \end{equation}
We estimate now some terms on the right-hand side of \eqref{eeq32}. 
For $\textbf{T}_1$, by using Cauchy-Schwarz inequality, we obtain that
\begin{equation*}
   \left|\int_0^1 u_xu_t \, dx \right|\leq C\int_0^1 (u_t^2+au_x^2 ) \, dx\leq CE_i(t) \leq CE(t).
\end{equation*}
Then, since $E$ is a non-increasing function of $t$ we get
\begin{equation}\label{eeq34}
    \textbf{T}_1 \leq  CE(S)+ CE(T) \leq 2C E(S).
\end{equation}
Using  Young's inequality with $\delta_0>0$ and \eqref{ELocdot}, we find that
\begin{equation}\label{eeq35}
\begin{aligned}
        \textbf{T}_2 &\leq  \frac{\delta_0}{2}  \int_S^T \int_0^1 au_x^2 \, dx  \, dt + \frac{C}{2\delta_0} \int_S^T \int_0^1 q(x) u_t^2 \, dx  \, dt \\
       & \leq \delta_0  \int_S^T E_i(t)\, dt + \frac{C }{2\delta_0}E(S).
    \end{aligned}
\end{equation}
Moreover, from \eqref{ELocdot} we have
\begin{equation}\label{eq:eta1xi1}
    \int_S^T \Big(\eta_1^2(t) +\zeta_1^2(t)\Big) \, dt \leq C \int_S^T \big( -\dot{E}_2(t)\big) \, dt \leq CE(S).
\end{equation}
Gathering (\ref{eeq32}), (\ref{eeq34}), (\ref{eeq35}) and \eqref{eq:eta1xi1} besides taking $\delta_0$ sufficiently small, we conclude (\ref{eslemma22}).

 \end{proof}

\begin{lemma}\label{lemmaS0}
 Assume $(A1)$, $(A2)$, and $(A3)$ hold true. Then, there exists a positive constant $C>0$ such that strong solutions of system \eqref{sys_main_int} satisfy
    \begin{equation}\label{eq36}
\mathbf{S}_0 + \int_S^T \eta_2^2(t) \, dt \leq C\underbrace{\int_S^T \int_{Q_2 \cap(0,1)}|u-u_*|^2 \, dx  \, dt}_{\mathbf{S}_1}+C E(S),
\end{equation}
for every $0\leq S \leq T.$
\end{lemma}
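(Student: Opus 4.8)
The plan is to test the equation against the localized multiplier $\phi\,(u-u_*)$, exploiting that $\phi\equiv 1$ on $Q_1$ and $\supp\phi\subset\overline{Q_2}$. Writing $v:=u-u_*$, so that $v$ solves $v_{tt}-(av_x)_x=-qv_t$ and $v(t,1)=\eta_2(t)$ by \eqref{prop-u_*}, I would multiply by $\phi v$ and integrate over $[S,T]\times(0,1)$. Integrating the $v_{tt}$ term by parts in $t$ and the $(av_x)_x$ term by parts in $x$ produces the identity
\begin{equation*}
\int_S^T\!\!\int_0^1 \phi\, a v_x^2 \,dx\,dt = \int_S^T\!\!\int_0^1 \phi\, v_t^2 \,dx\,dt - \Big[\int_0^1 \phi\, v v_t\,dx\Big]_S^T - \int_S^T\!\!\int_0^1 \phi_x\, a v v_x \,dx\,dt - \int_S^T\!\!\int_0^1 \phi\, q v v_t\,dx\,dt + \int_S^T \big[\phi\, a v v_x\big]_0^1\,dt.
\end{equation*}
Since $\phi\equiv 1$ on $Q_1$ and $\phi\ge 0$, the left-hand side dominates $\int_S^T\!\int_{Q_1\cap(0,1)} a u_x^2$, so it remains to estimate every term on the right by $C\mathbf{S}_1+CE(S)$, up to a favorable $\eta_2^2$ contribution.

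The kinetic terms are controlled by the dissipation. Because $\supp\phi\cap(0,1)\subset Q_2\cap(0,1)\subset\omega$, assumption $(A2)$ gives $q\ge\underline{q}$ there, whence $\int_S^T\!\int_0^1 \phi v_t^2\le \underline{q}^{-1}\int_S^T\!\int_0^1 q u_t^2\le \underline{q}^{-1}E(S)$ by \eqref{ELocdot}; the same bound handles the $u_t^2$ part of $\mathbf{S}_0$ directly. The damping cross term $\int\!\int\phi q v v_t$ splits by Young's inequality into the previous kinetic term ($\le CE(S)$) and $\int\!\int\phi q v^2\le C\mathbf{S}_1$. For the time-boundary term I would use Cauchy--Schwarz together with the pointwise estimate $v(t,x)^2\le 2\eta_2^2(t)+C\,E_i(t)$, obtained from $v(t,x)=\eta_2(t)-\int_x^1 u_x\,ds$, to get $\big|\int_0^1\phi v v_t\,dx\big|\le CE(t)$, and conclude $\le CE(S)$ by monotonicity of $E$.

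The interior cross term $\int\!\int\phi_x a v v_x$ must be treated without ever producing $v_x^2$ on the transition region $Q_2\setminus Q_1$, where it could not be absorbed. The trick is to write $v v_x=\tfrac12(v^2)_x$ and integrate by parts in $x$: the boundary contributions vanish since $\phi_x$ is supported away from both endpoints, leaving $-\tfrac12\int\!\int(\phi_x a)_x v^2$, which is bounded by $C\mathbf{S}_1$ because $\supp\phi_x\subset Q_2$.

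The spatial boundary term at $x=1$ is the crux of the argument and the source of the $\int_S^T\eta_2^2$ on the left of \eqref{eq36}. Using $\phi(1)=1$ and $v(t,1)=\eta_2(t)$, it equals $a(1)\int_S^T \eta_2\, u_x(t,1)\,dt$; I would substitute the boundary dynamics \eqref{eq:xi1} in the form $\beta_1 u_x(t,1)=-\dot\eta_1-\alpha_1\eta_1-\alpha_2\eta_2$, which generates the favorable term $-\tfrac{a(1)\alpha_2}{\beta_1}\int_S^T\eta_2^2$ (moved to the left to supply the $\eta_2^2$ term of \eqref{eq36}), together with $-\tfrac{a(1)}{\beta_1}\int_S^T\eta_2\dot\eta_1$ and $-\tfrac{a(1)\alpha_1}{\beta_1}\int_S^T\eta_1\eta_2$. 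For the first I would integrate by parts in $t$ and use $\dot\eta_2=\eta_1$ to rewrite it as $\int_S^T\eta_1^2$ plus endpoint terms, all $\le CE(S)$ via \eqref{eq:eta1xi1} and the control of $\eta_1,\eta_2$ by $E$; the last is handled by Young's inequality, absorbing a small multiple of $\int_S^T\eta_2^2$ into the left-hand side and leaving $\int_S^T\eta_1^2\le CE(S)$. Collecting all estimates and absorbing the small $\eta_2^2$ term yields \eqref{eq36}. I expect the bookkeeping of this boundary term---extracting the correct sign of the $\eta_2^2$ contribution from the coupled boundary ODEs---to be the main obstacle.
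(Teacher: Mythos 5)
Your proposal is correct and follows essentially the same route as the paper: the multiplier $\phi\,(u-u_*)$, the rewriting $vv_x=\tfrac12(v^2)_x$ for the transition-region cross term, and the substitution of \eqref{eq:xi1} at $x=1$ to extract the favorable $\tfrac{a(1)\alpha_2}{\beta_1}\int_S^T\eta_2^2$ are exactly the paper's steps \eqref{lemmaS0:eq37}--\eqref{eeq37}. The only (harmless) deviations are cosmetic: where you use Young's inequality plus absorption for $\int\!\!\int \phi\,q\,v v_t$ and for $\alpha_1\int\eta_1\eta_2$, the paper observes these are exact time derivatives ($\tfrac12\frac{d}{dt}\int\phi q v^2$ and $\tfrac{\alpha_1}{2}\frac{d}{dt}\eta_2^2$, via $\dot\eta_2=\eta_1$), avoiding any absorption.
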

\begin{proof}
    We multiply the first equation of $(\ref{sys_main_int})$ by $\phi  (u-u_*)$ and we integrate over $[S, T] \times[0,1]$ to obtain that
\begin{equation}\label{lemmaS0:eq37}
\int_S^T \int_0^1 \phi (u-u_*) \left(u_{tt}-(au_{x})_x+q(x)u_t\right) \, dx  \, dt=0.
\end{equation}
For the first term, we have
\begin{equation}\label{lemmaS0:eq38}
\int_S^T \int_0^1 \phi (u-u_*) u_{tt} \, dx  \, dt=- \int_S^T \int_0^1  \phi u_t^2  \, dx \, dt+ \left[\int_0^1 \phi (u-u_*) u_t \, dx\right]_S^T.
\end{equation}
For the second term, an integration by part with respect to $x$ yields
\begin{equation}\label{lemmaS0:eq39}
\begin{aligned}
-\int_S^T \int_0^1 \phi (u-u_*) (au_{x})_x \, dx  \, dt=&\int_S^T \int_0^1 (\phi (u-u_*))_x (a u_x) \, dx  \, dt  - \int_S^T  \big[\phi(u-u_*) (a u_x) \big]_0^1dt\\
=& \int_S^T \int_0^1\phi a u_x^2 \, dx  \, dt +\int_S^T \int_0^1\phi_x (u-u_*) a u_x \, dx  \, dt\\ 
&- \int_S^T a(1)\eta_2(t)u_x(t,1)\, dt.
\end{aligned}
\end{equation}
We now simplify the two last terms on the right-hand side of \eqref{lemmaS0:eq39}.
Using a suitable integration by parts we find that
\begin{equation} \label{neq33}
    \begin{aligned}
        \int_S^T \int_0^1  \phi_x (u-u_*) au_x \, dx  \, dt &= \frac{1}{2}\int_S^T \int_0^1 a \phi_x \big((u-u_*)^2\big)_x \, dx  \, dt \\
        &= - \frac{1}{2}\int_S^T \int_0^1 (a\phi_{x})_x (u-u_*)^2 \, dx  \, dt + \left[ \frac{1}{2}\int_S^T a \phi_x (u-u_*)^2  \, dt  \right]_0^1 \\
        &= -\frac{1}{2}\int_S^T \int_0^1 (a \phi_{x})_x (u-u_*)^2 \, dx  \, dt,
    \end{aligned}
\end{equation}
where the last term on the second line in \eqref{neq33} vanishes because $\phi$ is constant in some neighborhoods of $0$ and $1$.\\
As for the boundary term $-\int_S^T a(1) \eta_2(t)u_x(t,1)\, dt$, we multiply \eqref{eq:xi1} by $\frac{1}{\beta_1} \eta_2(t)$ and integrate over $[S,T]$ to obtain that
\begin{equation}\label{eeq37}
    \begin{aligned}
     -\int_S^T a(1)\eta_2 u_x(t, 1) \, dt=& \frac{a(1)\alpha_2}{\beta_1} \int_S^T\eta_2^2(t) \, dt -\frac{a(1)}{\beta_1} \int_S^T \eta_1^2(t)\, dt\\
     &+ \int_S^T\frac{d}{\, dt}\left(\frac{a(1) \alpha_1}{2 \beta_1} \eta_2^2(t)+ \frac{a(1)}{\beta_1}\eta_1(t) \eta_2(t)\right)\, dt\\    
    =& \frac{a(1) \alpha_2}{\beta_1} \int_S^T\eta_2^2(t) \, dt -\frac{a(1)}{\beta_1} \int_S^T \eta_1^2(t)\, dt\\
    &+ \left[\frac{a(1) \alpha_1}{2 \beta_1} \eta_2^2(t)+ \frac{a(1)}{\beta_1}\eta_1(t) \eta_2(t)\right]_S^T.
    \end{aligned}
\end{equation}
Moreover, for the third term in \eqref{lemmaS0:eq37} an integration by parts with respect to $t$ gives
\begin{equation}\label{neq34}
\begin{aligned}
       \int_S^T \int_0^1 \phi (u-u_*) q(x)u_t \, dx  \, dt&= \frac{1}{2}\int_S^T \int_0^1 \big( \phi q(x) (u-u_*)^2\big)_t \, dx  \, dt\\
       &= \frac{1}{2}\left[ \int_0^1 \phi q(x) (u-u_*)^2 \, dx  \right]_S^T.
       \end{aligned}
\end{equation}
Putting together \eqref{lemmaS0:eq37},\eqref{lemmaS0:eq38} \eqref{lemmaS0:eq39}, \eqref{neq33}, \eqref{eeq37} and \eqref{neq34} gives
\begin{equation}\label{neweq40}
\begin{aligned}
\int_S^T \int_0^1 \phi (u_t^2 +au_x^2) \, dx  \, dt &+ \frac{a(1)\alpha_2}{\beta_1} \int_S^T\eta_2^2(t) \, dt= 2\int_S^T \int_0^1 \phi u_t^2 \, dx  \, dt- \left[\int_0^1 \phi (u-u_*) u_t \, dx \right]_S^T\\
& + \frac{1}{2}\int_S^T \int_0^1 (a\phi_{x})_x (u-u_*)^2 \, dx  \, dt 
-\frac{1}{2} \left[\int_0^1 \phi q(x) (u-u_*)^2 \, dx  \right]_S^T   \\ 
&   +\frac{a(1)}{\beta_1} \int_S^T \eta_1^2(t)\, dt- \left[\frac{a(1)\alpha_1}{2 \beta_1} \eta_2^2(t)+ \frac{a(1)}{\beta_1}\eta_1(t) \eta_2(t)\right]_S^T.
\end{aligned}
\end{equation}
Since $\phi=1$ on $Q_1 \cap (0,1)$ and $\phi \geq 0 $ on $(0,1)$, we have
\begin{equation*}
    \underbrace{\int_S^T \int_{Q_1 \cap (0,1)} (u_t^2 +au_x^2) \, dx  \, dt}_{\textbf{S}_0} \leq \int_S^T \int_0^1 \phi(u_t^2 +au_x^2)\, dx  \, dt
\end{equation*}
Hence, we have
\begin{equation}\label{eq42}
\begin{aligned}
     \mathbf{S}_0 + \frac{a(1)\alpha_2}{\beta_1} \int_S^T\eta_2^2(t) \, dt \leq & 2 \underbrace{ \int_S^T \int_0^1 \phi u_t^2 \, dx  \, dt}_{\textbf{T}_4}+ \underbrace{\left| \left[\int_0^1  (u-u_*) u_t \, dx \right]_S^T \right|}_{\textbf{T}_5}\\
     &+\frac{1}{2}\underbrace{\left|\int_S^T \int_0^1 (a\phi_{x})_x (u-u_*)^2 \, dx  \, dt\right|}_{\textbf{T}_6} 
      +\frac{1}{2}\underbrace{ \left| \left[\int_0^1 \phi q(x) (u-u_*)^2 \, dx  \right]_S^T \right|}_{\textbf{T}_7} 
     \\
     &+ \frac{a(1)}{\beta_1}\underbrace{ \int_S^T \eta_1^2(t)\, dt }_{\textbf{T}_8} +\underbrace{\left| \left[\frac{a(1)\alpha_1}{2 \beta_1} \eta_2^2(t)+ \frac{a(1)}{\beta_1}\eta_1(t) \eta_2(t)\right]_S^T \right|}_{\textbf{T}_9 }.    
\end{aligned}
\end{equation}
We estimate now the terms on the right-hand side of \eqref{eq42}.
Using the fact that $\phi \leq C q$ on $[0,1]$ and \eqref{ELocdot}, we have
\begin{equation}\label{eq43}
    \textbf{T}_4 \leq  C\int_S^T \int_0^1 q u_t^2 \, dx  \, dt  \leq C E(S).
\end{equation}
To estimate $\textbf{T}_5$, we note that for every $t\geq 0$ and $x \in [0,1]$ the following inequality holds
\begin{equation}\label{umoinsustar}
\begin{aligned}
         |u-u_*|^2 &\leq 2|u-u(t,1)|^2+2|u(t,1)-u_*|^2\\
         &\leq 2|u-u(t,1)|^2+2\eta_2^2(t),
\end{aligned}
\end{equation}
integrating over $[0,1]$ with respect to $x$ and using Poincaré's inequality, give
\begin{equation} \label{q36}
  \int_0^1 |u-u_*|^2dx \leq 2\int_0^1 u_x^2 \, dx + 2 \eta_2^2(t).
\end{equation}
Then, using Young's inequality and \eqref{q36} we obtain that
\begin{equation*}
\begin{aligned}
    \left| \int_0^1 (u-u_*) u_t \, dx \right| &\leq \frac{1}{2} \int_0^1 u_t^2 \, dx +\frac{1}{2} \int_0^1 |u-u_*|^2dx\\
    &\leq C\int_0^1 (u_t^2 +au_x^2 ) \, dx +\eta_2^2(t) \leq CE(t).
    \end{aligned}
    \end{equation*}
Then it follows
    \begin{equation}\label{eq44}
      \textbf{T}_5 =\left| \left[\int_0^1  (u-u_*) u_t \, dx \right]_S^T \right|\leq  2E(S)+ 2E(T) \leq 4E(S).
    \end{equation}
For $ \textbf{T}_6$, we have
\begin{equation}\label{eq45}
\begin{aligned}    
    \textbf{T}_6 &= \left| \int_S^T \int_0^1 (a\phi_{x})_x |u-u_*|^2 \, dx  \, dt\right| \\
    &\leq  \n{(a\phi_{x})_x} \underbrace{\int_S^T \int_{Q_2 \cap (0,1)} |u-u_*|^2 \, dx  \, dt}_{\textbf{S}_1}.    \end{aligned}
\end{equation}
To estimate $\textbf{T}_7$, we use  \eqref{umoinsustar}  and Poincaré's inequality, to obtain that  
\begin{equation}\label{eq46}
\begin{aligned}
   \int_0^1 \phi q(x) |u-u_*|^2 \, dx  \leq & 2\bar{q}\int_0^1 |u-u(t,1)|^2 \, dx + 2\bar{q} \eta_2^2(t)\\
    &\leq 2\bar{q}\int_0^1 u_x^2 \, dx + 2\bar{q} \eta_2^2(t)\\
    & \leq C E(t).
\end{aligned}
\end{equation}
Then we conclude
\begin{equation}\label{qq41}
    \textbf{T}_7 =\left| \left[\int_0^1 \phi q(x) (u-u_*)^2 \, dx  \right]_S^T \right|\leq  C\big( E(S)+ E(T) \big) \leq CE(S).
\end{equation}
As for $\textbf{T}_8$, we use \eqref{ELocdot} and the fact that $E$ is a non-increasing function of $t$ to find that
\begin{equation}\label{T_8}
    \textbf{T}_8=\int_S^T \eta_1^2(t)\, dt \leq \frac{1}{\widetilde{\alpha}_1}\int_S^T (-\dot{E}(t)) \, dt \leq \frac{1}{\widetilde{\alpha}_1} \big( E(S)+ E(T) \big) \leq \frac{2}{\widetilde{\alpha}_1}E(S).
\end{equation}
Finally, using Young's inequality and the fact that $E$ is a non-increasing function of $t$ give
\begin{equation}\label{qq42}
   \textbf{T}_9= \left|\left[\frac{a(1)\alpha_1}{2 \beta_1} \eta_2^2(t)+ \frac{a(1)}{\beta_1}\eta_1(t) \eta_2(t)\right]_S^T \right| \leq  C \big( E(S)+ E(T) \big) \leq CE(S).
\end{equation}
Gathering \eqref{eq42}, \eqref{eq43}
\eqref{eq44}, \eqref{eq45}, \eqref{qq41}, \eqref{T_8} and \eqref{qq42} we obtain that \eqref{eq36}.
\end{proof}

From Lemma \ref{lem:S0} and Lemma \ref{lemmaS0} the following inequality follows
\begin{equation} \label{atthispoint}
\int_S^T E(t) \, dt \leq C\underbrace{\int_S^T \int_{Q_2 \cap(0,1)}|u-u_*|^2 \, dx  \, dt}_{\mathbf{S}_1}+CE(S), \quad \forall \,  0\leq S \leq T.
\end{equation}
Next, we estimate $\mathbf{S}_1$. For this, we use a special multiplier which is concentrated on the part $Q_2 \cap (0,1)$ by the function $\varphi$. 

Let us define the function $v$
as the solution of
\begin{equation}\label{aeequ_v}
\left\{ \begin{array}{l}
(av_{x})_x=\varphi (u-u_*), \quad \forall x \in (0, 1) \\
v_x(t,0)=v_x(t,1)+\frac{\alpha_2}{\beta_1}v(t,1)=0.
\end{array}\right.
\end{equation}
\begin{lemma} For $v$ as defined in \eqref{aeequ_v}, we have the following estimates:
\begin{equation}\label{v1andv0}
|v(t, 0)|^2 +|v(t, 1)|^2 \leq  C E(t).
\end{equation} 
\begin{equation}\label{aelemma_vL}
\int_0^1|v|^2 \, dx  \leq CE(t).
\end{equation}

\begin{equation}\label{vt1andvt0}
|v_t(t, 0)|^2 +|v_t(t, 1)|^2 \leq  C \int_0^1 q|u_t|^2 \, dx.
\end{equation} 

\begin{equation}\label{aelemma_vtL}
\int_0^1\left|v_t\right|^2 \, dx \leq C\int_0^1 q \left|u_t\right|^2 \, dx.
\end{equation}

\end{lemma}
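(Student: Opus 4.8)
The plan is to treat \eqref{aeequ_v} as a family, parametrized by $t$, of elliptic two-point boundary value problems in the $x$ variable, and to prove a single coercive energy estimate from which all four bounds follow by inserting the appropriate right-hand side. Consider, for $h\in L^2(0,1)$, the auxiliary problem $(aw_x)_x=h$ on $(0,1)$ with $w_x(0)=0$ and $w_x(1)+\frac{\alpha_2}{\beta_1}w(1)=0$. Multiplying by a test function and integrating by parts, using \emph{both} boundary conditions, the natural weak formulation has bilinear form
\begin{equation*}
b(w,z)=\int_0^1 a\,w_x z_x\,dx+\frac{\alpha_2 a(1)}{\beta_1}\,w(1)z(1),
\end{equation*}
which, thanks to $(A1)$ and $(A3)$ (so that $\underline a>0$ and $\alpha_2,\beta_1,a(1)>0$), is coercive on $H^1(0,1)$: indeed $b(w,w)\ge \underline a\|w_x\|_{L^2}^2+\frac{\alpha_2 a(1)}{\beta_1}|w(1)|^2$, and a Poincaré-type inequality (writing $w(x)=w(1)-\int_x^1 w_x$) gives $\|w\|_{H^1}^2\le C(\|w_x\|_{L^2}^2+|w(1)|^2)$. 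By Lax--Milgram this guarantees that $v$ in \eqref{aeequ_v} is well defined for each $t$.

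First I would take $z=w$ in the identity $b(w,w)=-\int_0^1 h w\,dx$, so that $\|w_x\|_{L^2}^2+|w(1)|^2\le C\big|\int_0^1 hw\big|\le C\|h\|_{L^2}\|w\|_{L^2}$. Combined with the pointwise bound $|w(x)|\le |w(1)|+\|w_x\|_{L^2}$, hence $\|w\|_{L^2}^2\le 2|w(1)|^2+2\|w_x\|_{L^2}^2$, this self-improves to the master estimate
\begin{equation*}
|w(0)|^2+|w(1)|^2+\int_0^1|w|^2\,dx+\int_0^1|w_x|^2\,dx\le C\int_0^1|h|^2\,dx,
\end{equation*}
the bound on $|w(0)|$ coming again from $|w(0)|\le |w(1)|+\|w_x\|_{L^2}$. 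Applying this with $h=\varphi(u-u_*)$, and using $0\le\varphi\le 1$ together with the already established inequality \eqref{q36}, gives $\int_0^1|h|^2\le\int_0^1|u-u_*|^2\le CE(t)$, which yields at once \eqref{v1andv0} and \eqref{aelemma_vL}.

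For the two bounds on $v_t$, I would differentiate \eqref{aeequ_v} in $t$. Since $a$, $\varphi$ and $u_*$ are time-independent, $v_t$ solves the same boundary value problem with right-hand side $h=\varphi u_t$ and the same homogeneous conditions $(v_t)_x(t,0)=0$, $(v_t)_x(t,1)+\frac{\alpha_2}{\beta_1}v_t(t,1)=0$. Applying the master estimate to $w=v_t$ gives the $L^2$ and trace bounds in terms of $\int_0^1\varphi^2|u_t|^2\,dx$. The final point is to replace the $\varphi$-weight by the damping weight $q$: since $\varphi$ is supported in $\omega$, where $q\ge\underline q$, and $0\le\varphi\le 1$, we have $\varphi^2\le\varphi\le q/\underline q$ pointwise, whence $\int_0^1\varphi^2|u_t|^2\le\frac{1}{\underline q}\int_0^1 q|u_t|^2$. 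This delivers \eqref{vt1andvt0} and \eqref{aelemma_vtL}.

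The main obstacle I anticipate is not any single inequality but the rigorous justification of differentiating the elliptic problem in $t$, i.e. showing that $v_t$ exists, belongs to $H^1(0,1)$, and solves the differentiated problem with the differentiated boundary conditions. I would handle this by noting that the solution map $h\mapsto v$ of the Lax--Milgram problem is linear and bounded from $L^2$ to $H^1$, so $t\mapsto v(t,\cdot)$ inherits the time regularity of $t\mapsto\varphi(u(t,\cdot)-u_*)$ (differentiable into $L^2$ for strong solutions), and $\partial_t$ commutes with the $t$-independent operator and boundary forms; the estimates are first proved for strong solutions and extended by density, consistently with the strategy adopted throughout this section.
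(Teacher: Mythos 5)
Your proof is correct, and it takes a genuinely different route from the paper's. The paper solves \eqref{aeequ_v} explicitly: integrating $(av_x)_x=\varphi(u-u_*)$ once from $0$ using $v_x(t,0)=0$ and once more, with the Robin condition at $x=1$ fixing the constant, it obtains the closed-form representation \eqref{defv}, from which a single Cauchy--Schwarz application gives the pointwise bound $|v(t,x)|^2\le C\int_0^1\varphi|u-u_*|^2\,dx$ for \emph{every} $x\in[0,1]$; the $v_t$ estimates come from the same formula with $u-u_*$ replaced by $u_t$ together with $\varphi\le Cq$. You instead pass through the weak formulation, prove coercivity of $b(w,z)=\int_0^1 a\,w_xz_x\,dx+\frac{\alpha_2 a(1)}{\beta_1}w(1)z(1)$, and bootstrap the identity $b(w,w)=-\int_0^1 hw\,dx$ through the trace-Poincar\'e inequality into the master estimate $|w(0)|^2+|w(1)|^2+\|w\|_{H^1}^2\le C\|h\|_{L^2}^2$; this is sound, and your weight replacement $\varphi^2\le\varphi\le q/\underline{q}$ on $\supp\varphi\subset\omega$ is exactly what (A2) supplies, matching the paper's use of $\varphi\le Cq$. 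As for what each approach buys: your Lax--Milgram route additionally establishes existence and uniqueness of $v$, which the paper takes for granted via the formula, it is insensitive to the one-dimensional structure, and it even sidesteps a small slip in \eqref{defv}, where the constant $\beta_1/(a(1)\alpha_1)$ should read $\beta_1/(a(1)\alpha_2)$ given the boundary condition in \eqref{aeequ_v}. The paper's route is more elementary, yields the stronger uniform-in-$x$ pointwise bound in one line, and makes the differentiation in $t$ transparent (one simply differentiates the explicit formula), whereas your abstract version needs --- and correctly provides --- the boundedness of the solution map $h\mapsto v$ from $L^2$ to $H^1$ combined with the time regularity of $t\mapsto\varphi(u(t,\cdot)-u_*)$ for strong solutions, extended by density as in the rest of the section. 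Both arguments deliver all four estimates in full.
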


\begin{proof}
 From the definition of $v$, one gets
\begin{equation}\label{defv}
    v(t, x)=\int_0^x \Big(\int_s^x \frac{d\tau}{a(\tau)} \Big) \varphi (u-u_*) d s -\int_0^1 \Big( \frac{\beta_1}{a(1)\alpha_1} +\int_s^1 \frac{d\tau}{a(\tau)} \Big) \varphi (u-u_*) d s, \quad \forall x \in[0,1] .
\end{equation}
Using Cauchy-Schwarz inequality, we deduce
\begin{equation}\label{anew51}
|v(t, x)|^2 \leq  C \int_0^1\varphi|u-u_*|^2 \, dx, \quad \forall x \in[0,1].
\end{equation}
Then we use \eqref{umoinsustar} and Poincaré's inequality to obtain that 
\begin{equation}\label{anew52}
|v(t, x)|^2 \leq C \int_0^1 \varphi|u-u_*|^2 \, dx \leq C\Big( \int_0^1 \varphi \left|u_x\right|^2 \, dx+ \eta_2^2(t) \Big) \leq CE(t), \quad \forall x \in [0,1].
\end{equation}
yielding \eqref{v1andv0} and also \eqref{aelemma_vL} after integrating over $[0,1]$ with respect to $x$.\\
Similarly, one has
\begin{equation}\label{defvt}
v_t(t, x)=\int_0^x \Big(\int_s^x \frac{d\tau}{a(\tau)} \Big) \varphi u_t d s -\int_0^1 \Big( \frac{\beta_1}{a(1)\alpha_1} +\int_s^1 \frac{d\tau}{a(\tau)} \Big) \varphi u_t d s.
\end{equation}
Using Cauchy-Schwarz inequality and the fact that $\varphi \leq C q $, we infer that
\begin{equation}
|v_t(t, x)|^2 \leq  C \int_0^1 \varphi|u_t|^2 \, dx\leq  C \int_0^1 q|u_t|^2 \, dx, \quad \forall x \in[0,1]
\end{equation} 
and then \eqref{vt1andvt0} and \eqref{aelemma_vtL} follow. 
\end{proof}

\begin{lemma}\label{lemma16}
Assume  (A1), (A2), and (A3). Then there exists a positive constant $C>0$ such that strong solutions of system \eqref{sys_main_int} satisfy 
\begin{equation}\label{aeS1}
\underbrace{\int_S^T \int_{Q_2 \cap(0,1)}|u-u_*|^2 \, dx  \, dt}_{\textbf{S}_1} \leq C \delta_5  \int_S^T E(t) \, dt+C(\frac{1}{\delta_5}+1) E(S),
\end{equation}
for every $0\leq S \leq T$ and every $\delta_5>0$.
\end{lemma}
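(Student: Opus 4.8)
The plan is to use the auxiliary function $v$ defined in \eqref{aeequ_v} as a multiplier. Since $\varphi\equiv 1$ on $Q_2\cap(0,1)$ and $\varphi\ge 0$ on $(0,1)$, we have $\mathbf{S}_1\le \int_S^T\int_0^1\varphi|u-u_*|^2\,dx\,dt$, so it suffices to bound the latter. Working with strong solutions (so that all integrations by parts are licit), I would multiply the first equation of \eqref{sys_main_int} by $v$ and integrate over $[S,T]\times(0,1)$, obtaining $\int_S^T\int_0^1 v\big(u_{tt}-(au_x)_x+qu_t\big)\,dx\,dt=0$, and then transform each term by parts so as to recover $\int_S^T\int_0^1\varphi|u-u_*|^2$ through the defining relation $(av_x)_x=\varphi(u-u_*)$.

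The term $\int_S^T\int_0^1 v u_{tt}$ yields, after integration by parts in $t$, the boundary-in-time quantity $\big[\int_0^1 v u_t\,dx\big]_S^T$ and $-\int_S^T\int_0^1 v_t u_t$. The elliptic term $-\int_S^T\int_0^1 v(au_x)_x$ yields, after one integration by parts in $x$, the spatial boundary term $-\int_S^T[v a u_x]_0^1\,dt$ together with $\int_S^T\int_0^1 a v_x u_x$; integrating this last integral by parts once more and invoking $(av_x)_x=\varphi(u-u_*)$ turns it into $-\int_S^T\int_0^1\varphi(u-u_*)^2$ plus the boundary term $\big[a v_x(u-u_*)\big]_0^1$, which, by the conditions $v_x(t,0)=0$, $v_x(t,1)=-\tfrac{\alpha_2}{\beta_1}v(t,1)$ and the identity $u(t,1)-u_*=\eta_2(t)$, equals $-\tfrac{a(1)\alpha_2}{\beta_1}v(t,1)\eta_2(t)$. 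Rearranging isolates $\int_S^T\int_0^1\varphi(u-u_*)^2$ as a sum of boundary-in-time terms, the interior terms $\int_S^T\int_0^1 v_t u_t$ and $\int_S^T\int_0^1 qu_t v$, and spatial boundary terms carrying $u_x(t,1)$ and $u_x(t,0)$.

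The decisive step is to eliminate the boundary fluxes via the dynamics: from \eqref{eq:xi1}, $u_x(t,1)=-\tfrac{1}{\beta_1}(\dot\eta_1+\alpha_1\eta_1+\alpha_2\eta_2)$, and from \eqref{eq:xi0}, $u_x(t,0)=\tfrac{1}{\mu_1}(\dot\zeta_1+\gamma_1\zeta_1)$. The $\alpha_2\eta_2$ contribution coming from $u_x(t,1)$ is then exactly cancelled by the term $-\tfrac{a(1)\alpha_2}{\beta_1}v(t,1)\eta_2$ produced above; this cancellation, which is precisely the reason the Robin condition for $v$ at $x=1$ was imposed, is the key point, since $\int_S^T\eta_2^2$ is not yet controlled by $E(S)$ at this stage of the argument. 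The surviving $\dot\eta_1$ and $\dot\zeta_1$ terms I would integrate by parts in $t$, producing boundary-in-time terms and the cross terms $\int_S^T v_t(t,1)\eta_1$ and $\int_S^T v_t(t,0)\zeta_1$.

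Finally I would estimate every remaining term using the bounds \eqref{v1andv0}--\eqref{aelemma_vtL} on $v$, the dissipation identity \eqref{ELocdot}, the inequality \eqref{eq:eta1xi1}, and the monotonicity of $E$. The boundary-in-time terms and the cross terms $\int_S^T v_t(t,1)\eta_1$, $\int_S^T v_t(t,0)\zeta_1$ are all bounded by $CE(S)$ after Cauchy--Schwarz in $t$, using $\int_S^T(|v_t(t,0)|^2+|v_t(t,1)|^2)\,dt\le C\int_S^T\int_0^1 qu_t^2\le CE(S)$ and $\int_S^T(\eta_1^2+\zeta_1^2)\,dt\le CE(S)$. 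The main obstacle is the interior terms $\int_S^T\int_0^1 v_t u_t$ and $\int_S^T\int_0^1 qu_t v$ together with the products $v(t,1)\eta_1$ and $v(t,0)\zeta_1$: because $\int_0^1 v^2\le CE(t)$ and $\int_0^1 v_t^2\le C\int_0^1 qu_t^2$ only yield control by $\int_S^T E$, these cannot be absorbed into $E(S)$ alone. Applying Young's inequality with parameter $\delta_5$ splits each of them into $C\delta_5\int_S^T E(t)\,dt$ plus $\tfrac{C}{\delta_5}E(S)$ (the $qu_t v$ term additionally contributing $\tfrac{C}{\delta_5}\int_S^T\int_0^1 qu_t^2\le\tfrac{C}{\delta_5}E(S)$), which is exactly the structure $C\delta_5\int_S^T E$ and $C(\tfrac{1}{\delta_5}+1)E(S)$ asserted in \eqref{aeS1}.
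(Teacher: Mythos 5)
Your proposal is correct and follows essentially the same route as the paper's proof of Lemma~\ref{lemma16}: multiply the equation by $v$, integrate by parts, exploit the Robin condition $v_x(t,1)+\frac{\alpha_2}{\beta_1}v(t,1)=0$ to cancel the $\alpha_2\eta_2$ flux contribution exactly as in \eqref{eeq63}--\eqref{eq:59}, and close with the bounds \eqref{v1andv0}--\eqref{aelemma_vtL}, the dissipation identity \eqref{ELocdot}, and Young's inequality with a small parameter. The only difference is in the bookkeeping at the end: where the paper juggles four parameters $\delta_2,\dots,\delta_5$ and re-injects the refined dissipation estimate \eqref{qu_testimate} (choosing $\delta_4=\tfrac{\widetilde C}{2}\delta_5$) to absorb $\int_S^T(\eta_1^2+\zeta_1^2)\,dt$, you bound that integral directly by $CE(S)$ via \eqref{eq:eta1xi1} and use Cauchy--Schwarz for the $v_t$-boundary cross terms, which is a legitimate and in fact slightly simpler way to arrive at the same estimate \eqref{aeS1}.
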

\begin{proof}
We multiply the first equation of (\ref{sys_main_int}) by $v$
\begin{equation}\label{aeeq61}
\int_S^T \int_0^1 v\left(u_{tt} -(au_{x})_x +q(x)u_t\right) \, dx  \, dt=0.
\end{equation}
First, an integration by part with respect to $t$ gives
\begin{equation}\label{aeeq62}
 \int_S^T \int_0^1v u_{tt}  \, dx  \, dt=-\int_S^T\int_0^1  v_t u_t  \, dx \, dt+\left[\int_0^1 v u_t \, dx\right]_S^T.
\end{equation}
Then, integrating by parts with respect to $x$ two times and using \eqref{aeequ_v} yields
\begin{equation}\label{aeeq63}
\begin{aligned} 
-\int_S^T\int_0^1 v (au_{x})_x \, dx &=\int_S^T\int_0^1 v_x au_x \, dx-\int_S^T [ v a u_x]_0^1 \, dt \\
&=-\int_S^T\int_0^1 (av_{x})_x (u-u_*) \, dx+\int_S^T [a v_x (u-u_*)]_0^1dt -\int_S^T [v au_x]_0^1 \, dt \\
&=\underbrace{\int_S^T a(1)v_x(t,1)\eta_2(t)\, dt  - \int_S^T a(1)v(t,1) u_x(t,1) \, dt+\int_S^T   a(0) v(t,0) u_x(t,0) \, dt}_{\textbf{I}}\\
&\quad -\int_S^T\int_0^1 \varphi |u-u_*|^2 \, dx.
\end{aligned}
\end{equation}
We simplify the three integrals  of $\textbf{I}$  in \eqref{aeeq63}.
Multiplying \eqref{eq:xi1} by $\frac{a(1)}{\beta_1} v(t,1)$ and \eqref{eq:xi0} by $\frac{a(0) }{\mu_1} v(t,0)$ then integrating over $[S,T]$ with respect to $t$ gives
\begin{equation}\label{qq57}
\begin{aligned}
     -\int_S^T a (1)v(t,1)u_x(t,1) \, dt &= \frac{a(1)}{\beta_1} \int_S^T \dot{\eta}_1(t) v(t,1) \, dt + \frac{a(1)\alpha_1}{\beta_1}\int_S^T \eta_1 v(t,1) \, dt\\ &+ \frac{a(1)\alpha_2}{\beta_1}\int_S^T \eta_2(t) v(t,1) \, dt,
\end{aligned}   
\end{equation}
\begin{equation}\label{qq58}
    \int_S^T  a(0) v(t,0) u_x(t,0) \, dt = \frac{a(0)}{\mu_1} \int_S^T \dot{\xi}_1 v(t,0) \, dt + \frac{a(0)\gamma_1}{\mu_1} \int_S^T \zeta_1 v(t,0) \, dt.
\end{equation}
Summing \eqref{qq57}, \eqref{qq58} and $\int_S^T a(1) v_x(t,1)\eta_2(t)\, dt$, then using the boundary conditions in \eqref{aeequ_v} we find that
\begin{equation}
\begin{aligned}
  \textbf{I}=& \frac{a(1)}{\beta_1}\int_S^T \dot{\eta}_1(t) v(t,1)\, dt +\frac{a(0)}{\mu_1}\int_S^T \dot{\xi}_1 v(t,0) \, dt\\ &+\frac{a(1)\alpha_1}{\beta_1} \int_S^T \eta_1(t) v(t,1)\, dt + \frac{a(0)\gamma_1}{\mu_1}\int_S^T \zeta_1 v(t,0)\, dt.
\end{aligned}
\end{equation}
An integration by parts with respect to $t$ gives
\begin{equation}\label{eq:59}
\begin{aligned}
     \textbf{I}= &-\frac{a(1)}{\beta_1}\int_S^T \eta_1(t) v_t(t,1)\, dt -\frac{a(0)}{\mu_1}\int_S^T \zeta_1(t) v_t(t,0) \, dt\\ &+\frac{a(1)\alpha_1}{\beta_1} \int_S^T \eta_1(t) v(t,1)\, dt
     + \frac{a(0)\gamma_1}{\mu_1}\int_S^T \zeta_1 (t)v(t,0)\, dt\\
     &+ \left[ \frac{a(1)}{\beta_1} \eta_1(t) v(t,1) + \frac{a(0)}{\mu_1} \zeta_1(t) v(t,0)\right]_S^T.
\end{aligned}   
\end{equation}
Combining \eqref{aeeq61}, \eqref{aeeq62}, \eqref{aeeq63} and \eqref{eq:59} we obtain that
\begin{equation}\label{qq56}
\begin{aligned}
\int_S^T \int_0^1 \varphi|u-u_*|^2 \, dx  \, dt=&-  \int_S^T\int_0^1 v_t u_t  d xdt+\left[\int_0^1 v u_t \, dx\right]_S^T+\int_S^T \int_0^1  v q(x)u_t \, dx  \, dt\\
& -\frac{a(1)}{\beta_1}\int_S^T \eta_1(t) v_t(t,1)\, dt -\frac{a(0)}{\mu_1}\int_S^T \zeta_1(t) v_t(t,0) \, dt\\
&+ \frac{a(1)\alpha_1}{\beta_1} \int_S^T \eta_1(t) v(t,1) \, dt+ \frac{a(0)\gamma_1}{\mu_1}\int_S^T \zeta_1 (t)v(t,0)\, dt\\
     &+\left[ \frac{a(1)}{\beta_1} \eta_1(t) v(t,1) + \frac{a(0)}{\mu_1} \zeta_1(t) v(t,0)\right]_S^T.
\end{aligned}
\end{equation}
Since $\varphi=1$ on $Q_2 \cap (0,1)$ and $\varphi \geq 0$ on $(0,1)$, we have
\begin{equation}
    \underbrace{\int_S^T \int_{Q_2 \cap(0,1)}|u-u_*|^2 \, dx  \, dt}_{\mathbf{S}_1} \leq \int_S^T \int_0^1 \varphi|u-u_*|^2 \, dx  \, dt.
\end{equation}
Hence we have that
\begin{equation}\label{aeS1aveclesv}
\begin{aligned}
\mathbf{S}_1 \leq & \underbrace{\int_S^T \int_0^1\left|v_t\right||u_t| \, dx  \, dt}_{\mathbf{V}_1}+\underbrace{\left|\left[\int_0^1 vu_t \, dx\right]_S^T\right|}_{\mathbf{V}_2}+\underbrace{\int_S^T \int_0^1|v q(x)u_t| \, dx  \, dt}_{\mathbf{V}_3}\\
&+\underbrace{\left|\frac{a(1)}{\beta_1}\int_S^T \eta_1(t) v_t(t,1)\, dt +\frac{a(0)}{\mu_1}\int_S^T \zeta_1(t) v_t(t,0) \, dt\right|}_{\textbf{I}_1}\\
&+ \underbrace{\left|\frac{a(1)\alpha_1}{\beta_1} \int_S^T \eta_1(t) v(t,1) \, dt+ \frac{a(0)\gamma_1}{\mu_1}\int_S^T \zeta_1 (t)v(t,0)\, dt\right|}_{\textbf{I}_2}\\
     &+ \underbrace{\left|\left[ \frac{a(1)}{\beta_1} \eta_1(t) v(t,1) + \frac{a(0)}{\mu_1} \zeta_1(t) v(t,0)\right]_S^T\right|}_{\textbf{I}_3}.
\end{aligned}
\end{equation}
We start by estimating $\mathbf{V}_1$. 
Using Young's inequality with $\delta_2>0$ we have
\begin{equation}\label{aeequ60}
\mathbf{V}_1 = \int_S^T \int_0^1 |v_t| |u_t| \, dx  \, dt \leq  \frac{1}{2\delta_2} \int_S^T \int_0^1\left|v_t\right|^2 \, dx  \, dt+ \frac{\delta_2}{2} \int_S^T \int_0^1|u_t|^2 \, dx  \, dt.
\end{equation}
From \eqref{aelemma_vtL} and \eqref{ELocdot} we find that 
\begin{equation}\label{aeequ62}
\int_S^T \int_0^1\left|v_t\right|^2 \, dx  \, dt\leq  C\int_S^T \int_0^1 q(x)|u_t|^2  \, dx  \, dt  \leq CE(S).
\end{equation}
Combining \eqref{aeequ60} and \eqref{aeequ62} we obtain that
\begin{equation}\label{av1}
\mathbf{V}_1  \leq \frac{\delta_2}{2} \int_S^T E(t) \, dt+ \frac{C}{ \delta_2} E(S).
\end{equation}
We now estimate $\mathbf{V}_2$. For a fixed $t \in[S, T]$, using Young's inequality and (\ref{aelemma_vL}) gives
\begin{equation}
\left|\int_0^1 vu_t \, dx\right| \leq \int_0^1|v||u_t| \, dx \leq \frac{1}{2}  \int_0^1|v|^2 \, dx+\frac{1}{2} \int_0^1|u_t|^2d x \leq C E(t).
\end{equation}
Since $E$ is a non-increasing function of $t$, we get
\begin{equation}\label{av2}
\mathbf{V}_2 =\left|\left[\int_0^1 vu_t \, dx\right]_S^T\right| \leq C \big( E(S)+ E(T) \big) \leq CE(S).
\end{equation}
For $\mathbf{V}_3$, we use Young’s inequality with $\delta_3>0$ and obtain that
\begin{equation} \label{aeeq72}
\mathbf{V}_3 \leq \frac{\delta_3}{2} \int_S^T \int_0^1 |v|^2 \, dx  \, dt + \frac1{2\delta_3} \int_S^T \int_0^1 q(x) u_t^2 \, dx  \, dt,
\end{equation}
then using \eqref{aelemma_vL} and \eqref{aeequ62} gives
\begin{equation}\label{av3}
\mathbf{V}_3 \leq C \delta_3 \int_S^TE(t)\, dt + \frac{C}{\delta_3} E(S).
\end{equation}
We now estimate $\textbf{I}_1$. Using Young's inequality with $\delta_4>0$ and \eqref{vt1andvt0} we find that
\begin{equation}\label{qq71}
\begin{aligned}
    \textbf{I}_1 &\leq \left|\frac{a(1)}{\beta_1}\int_S^T \eta_1(t) v_t(t,1)\, dt +\frac{a(0)}{\mu_1}\int_S^T \zeta_1(t) v_t(t,0) \, dt\right|\\
    &\leq C \delta_4\int_S^T (\eta_1^2(t)+\zeta_1^2(t)) \, dt + \frac{C}{\delta_4} \int_S^T\Big( |v_t(t, 0)|^2 +|v_t(t, 1)|^2 \Big)\, dt\\
    &\leq C \delta_4\int_S^T (\eta_1^2(t)+\zeta_1^2(t)) \, dt + \frac{C}{\delta_4} \int_S^T\int_0^1 qu_t^2 d x dt.
    \end{aligned}
   \end{equation}

For $\textbf{I}_2$, we use Young's inequality with $\delta_5>0$ and  \eqref{v1andv0}, to find that
\begin{equation}\label{qq75}
    \begin{aligned}
       \textbf{I}_2 & \leq \left|\frac{a(1)\alpha_1}{\beta_1} \int_S^T \eta_1(t) v(t,1) \, dt+ \frac{a(0)\gamma_1}{\mu_1}\int_S^T \zeta_1 (t)v(t,0)\, dt\right|\\
       &\leq  C \delta_5 \int_S^T \Big(|v(t, 0)|^2 +|v(t, 1)|^2 \Big)\, dt+\frac{C}{\delta_5} \int_S^T ( \eta_1^2(t)+ \zeta_1^2(t)) \, dt \\
       &\leq  C \delta_5 \int_S^T E(t)\, dt + \frac{C}{\delta_5} \int_S^T ( \eta_1^2(t)+ \zeta_1^2(t)) \, dt.
    \end{aligned}
\end{equation}

For $\textbf{I}_3$, we rely on  Young's inequality and \eqref{v1andv0} to obtain that
\begin{equation*}
\begin{aligned}
      \left| \frac{a(1)}{\beta_1} \eta_1(t) v(t,1) + \frac{a(0)}{\mu_1} \zeta_1 v(t,0)\right| &\leq C( \eta_1^2(t)+ \zeta_1^2(t)) + C \Big( |v(t, 0)|^2 +|v(t, 1)|^2\Big)\\
      &\leq C ( \eta_1^2(t)+ \zeta_1^2(t)) + C E(t)\\
      &\leq CE(t),
\end{aligned} 
\end{equation*}
Then since $E$ is a non-increasing function of $t$ we obtain that  
\begin{equation}\label{qq76}
   \textbf{I}_3 \leq CE(S).
\end{equation}
Gathering \eqref{aeS1aveclesv}, \eqref{av1}, \eqref{av2}, \eqref{av3},\eqref{qq71}, \eqref{qq75} and \eqref{qq76} we obtain that
\begin{equation}\label{S1beforequ_t}
\begin{aligned}
      \textbf{S}_1\leq & C \left(\delta_4 + \frac{1}{\delta_5} \right) \int_S^T (\eta_1^2(t)+\zeta_1^2(t)) \, dt +C \Big(  \delta_2+\delta_3 +\delta_5 \Big)\int_S^T E(t)\, dt\\
      &+\frac{C}{\delta_4}\int_S^T \int_0^1 q u_t^2 \, dx  \, dt + C\Big(\frac{1}{\delta_2} + \frac{1}{\delta_3}+ 1\Big)E_"(S). 
\end{aligned}
   \end{equation}
We now give the following inequality that we obtain from \eqref{ELocdot} and that plays a crucial role in finishing the estimate of $\textbf{S}_1$.
\begin{equation}\label{qu_testimate}
    \begin{aligned}
\int_S^T\int_0^1 q|u_t|^2 \, dx \, dt&=  \int_S^T \Big(-\dot{E}_2(t) - \widetilde{\alpha}_1 \eta_1^2(t)-\widetilde{\gamma}_1\zeta_1^2(t)\Big) \, dt\\
&\leq E(S) -E(T) - \widetilde{C}\int_S^T \Big(\eta_1^2(t) +\zeta_1^2(t)\Big) \, dt \\
&\leq E(S)- \widetilde{C}\int_S^T \Big(\eta_1^2(t) +\zeta_1^2(t)\Big) \, dt ,  \end{aligned}
\end{equation}
 where $\widetilde{C}=\min \{\widetilde{\alpha}_1,  \widetilde{\gamma}_1\}$ such that $\widetilde{\alpha}_1$ and  $\widetilde{\gamma}_1$ are given in \eqref{ELocdot}.
Putting \eqref{qu_testimate} in \eqref{S1beforequ_t} we obtain that

\begin{equation}\label{S1afterqu_t}
\begin{aligned}
      \textbf{S}_1\leq & C\left( \delta_4 + \frac{1}{\delta_5} - \frac{\widetilde{C}}{\delta_4}  \right) \int_S^T (\eta_1^2(t)+\zeta_1^2(t)) \, dt +C \Big(\delta_2+\delta_3 +\delta_5 \Big)\int_S^T E(t)\, dt\\
      & +C\Big(\frac{1}{\delta_2} + \frac{1}{\delta_3}+\frac{1}{\delta_4}+ 1\Big)E(S). 
\end{aligned}
   \end{equation}
Choosing $\delta_4=\frac{\widetilde{C}} {2}\delta_5 $  and $\delta_2=\delta_3=\delta_5$ yields
\begin{equation}\label{eq:76s1}
\begin{aligned}
       \textbf{S}_1\leq & C\left( \delta_5 
 - \frac{1}{\delta_5}\right)\int_S^T (\eta_1^2(t)+\zeta_1^2(t)) \, dt +C \delta_5 \int_S^T E(t)\, dt\\
      &+ C\Big( \frac{1}{\delta_5} + 1 \Big)E(S).  
\end{aligned}
\end{equation}
Then, up to keeping $\delta_5$ sufficiently small we obtain that 
\begin{equation}
       \textbf{S}_1\leq C  \delta_5 \int_S^T E(t)\, dt + C\Big(\frac{1}{\delta_5} + 1 \Big)E(S).
\end{equation}
This yields \eqref{aeS1}.
\end{proof}

From \eqref{atthispoint} and Lemma \ref{lemma16} we now have that
\begin{equation}
\int_S^T E(t)  \, dt  \leq C \delta_5  \int_S^T E(t) \, dt+C \big( \frac{1}{\delta_5} +1\big)E(S). 
\end{equation}
Up to choosing $\delta_5$ sufficiently small, we conclude
\begin{equation}
     \int_S^T E(t) \, dt \leq C E(S).
\end{equation}
Using \cite[Theorem 8.1]{komornik1994exact}, we obtain \eqref{th:mainth-cl1}.  

\medskip 
The proof is complete. 
\qed

\subsection{Frequency domain approach}\label{sec:frequency}

In this subsection, we give an alternative proof to the stability result from the previous subsection. 
Recall that $u_*$ is defined in \eqref{u_*} and \eqref{prop-u_*} holds. Note that if $(u,\eta_1, \eta_2, \zeta_1)$ is a weak solution then $(u + c,  \eta_1, \eta_2, \zeta_1)$ with $c$ being a constant is also a weak solution. By appropriately choosing the constant $c$, we get
\be
u_* = 0. \label{u_star_equ_zero}
\ee
Therefore, without loss of generality, we assume that \eqref{u_star_equ_zero} holds true in the rest of this subsection. Under this assumption, one can now involve the spectral theory of strongly continuous semigroups to conclude \eqref{th:mainth-cl1}. The details are below. 

We first introduce the energy space associated with problem \eqref{sys_main_int}.
\begin{equation}
    \mathcal{H}:= H^1\times L^2\times \mathbb{C}^3,
\end{equation}
where $H^j:=H^j([0,1],\mathbb{C})$, $j=1$ or $j=2$, and $L^2:=L^2([0,1],\mathbb{C})$ endowed with the natural inner product.
Consider the following hyperplane of $\cH$: 
\begin{equation}\label{hyperplan}
       \mathcal{H}_0:= \Big\{ \bU= (\mathbf{u},\mathbf{v}, \boldsymbol{\eta}_1, \boldsymbol{\eta}_2, \boldsymbol{\zeta}_1) \in \mathcal{H}: \mathbf{u}(1)=\boldsymbol{\eta}_2 \Big\}.
\end{equation}

We endow $\mathcal{H}_0$ with the following inner product
\begin{equation}\label{innerproduct}
    \langle \mathbf{U}, \widetilde{\mathbf{U}} \rangle_{\mathcal{H}_0}= \int_0^1 a \mathbf{u}_x \bar{\widetilde{\mathbf{u}}}_x \, dx + \int_0^1 \mathbf{v} \bar{\widetilde{\mathbf{v}}} \, dx + 
         \frac{a(1)}{p\beta_1} \boldsymbol{\eta}_1 \bar{\widetilde{\boldsymbol{\eta}}}_1 +\frac{a(1) \alpha_2}{p\beta_1}   \bar{\widetilde{\boldsymbol{\eta}}}_2 + \frac{a(0)}{p\mu_1}  \boldsymbol{\zeta}_1 \bar{\widetilde{\boldsymbol{\zeta}}}_1,
\end{equation}
where 
$\mathbf{U}=(\mathbf{u},\mathbf{v}, \boldsymbol{\eta}_1, \boldsymbol{\eta}_2, \boldsymbol{\zeta}_1)^T$ and $\widetilde{\mathbf U}= (\widetilde{\mathbf{u}},\widetilde{\mathbf{v}}, \widetilde{\boldsymbol{\eta}}_1, \widetilde{\boldsymbol{\eta}}_2, \widetilde{\boldsymbol{\zeta}}_1)^T$ belong to $\cH_0$. One can check that the inner products $\langle \cdot, \cdot \rangle_{\cH_0}$ and $\langle \cdot, \cdot \rangle_{\cH}$ are equivalent in $\cH_0$,
since, for $\bU = (\mathbf{u},\mathbf{v}, \boldsymbol{\eta}_1, \boldsymbol{\eta}_2, \boldsymbol{\zeta}_1) \in \cH_0$,  
\[
\int_0^1|\mathbf{u}|^2 \, dx\leq 2\big(|\mathbf{u}(1)|^2+\int_0^1|\mathbf{u}_x|^2dx\big)=2\big(|\boldsymbol{\eta}_2|^2+\int_0^1|\mathbf{u}_x|^2dx\big)\leq C\|\mathbf{U}\|_{\mathcal{H}_0}^2.
\]

Consider the following subspace of ${\mathcal{H}_0}$ given by
\begin{equation}
    \D:=\Big\{\bold{U} = (\mathbf{u},\mathbf{v}, \boldsymbol{\eta}_1, \boldsymbol{\eta}_2, \boldsymbol{\zeta}_1)^T \in \mathcal{H}_0\mid \ \bold{u}\in H^2(0, 1),\ \bold{v} \in H^1(0,1),\ \bold{v}(1)=\boldsymbol{\eta}_1,\ \bold{v}(0)=\boldsymbol{\zeta}_1 \Big\}.
\end{equation}
Note that, since $a \in W^{1, \infty}(0, 1)$ then $au_x \in H^1(0, 1)$ if $u \in H^2(0, 1)$. 
Equip $\D$ with the following scalar product
$$
\langle \bU, \widetilde \bU \rangle_{\D} = \langle \bU, \widetilde \bU \rangle_{\cH_0} + \int_0^1 \bu_{xx} \bar \tbu_{xx} \, dx +  \int_0^1 \bv _x \bar \tbv_x \, dx \quad \mbox{ for } \bU, \tbU \in \D, 
$$
which endows $\D$ with a structure of Hilbert space. Set 
 \begin{equation}
    \mathcal{A}\bold{U} := \left( \begin{array}{c} 
    \bold{v} \\
    (a\bold{u}_{x})_x-q\bold{v} \\
    -\alpha_1 \boldsymbol{\eta}_1 -\alpha_2 \boldsymbol{\eta}_2 -\beta_1 \bold{u}_x(1)\\
    \boldsymbol{\eta}_1\\
    -\gamma_1 \boldsymbol{\zeta}_1 +\mu_1 \bold{u}_x(0) 
    \end{array} \right) \quad \mbox{ for } \, \bold U \in \D. 
\end{equation}
By the Sobolev embedding, one gets that the embedding $\D$ into $\cH_0$ is compact.
If we denote $u_t=v$, $U=(u,v, \eta_1, \eta_2, \zeta_1)^T$ and $U_0=(u_0, u_1, \eta_{1,0}, \eta_{2,0}, \zeta_{1,0})^T$, system \eqref{sys_main_int} is then written as the abstract Cauchy problem given by
\begin{equation}\label{compact_sys}
    \left\{ \begin{array}{l}
         U_t=\mathcal{A}U,   \\
         U(0)=U_0 \in \mathcal{H}_0,
    \end{array} \right.
\end{equation}

In what follows, we denote 
\be \label{def-c}
c_1 = a(1)/\beta_1, \quad c_2 = a(1) \alpha_2 / \beta_1, \quad \mbox{ and } \quad c_{3} = a(0)/\mu_1. 
\ee

The following elementary result is repeatedly used in this section. 
\begin{lemma} \label{lem-A} Assume (A1), (A2), and (A3). For $\bU \in \D$, we have 
\be \label{lem-A-cl1}
\langle - \cA \bU, \bU \rangle_{\cH_0} =  -  \int_0^1 a \bv_x \bar  \bu_x + \int_0^1 a \bu_x \bar \bv_x  + \int_0^1 q |\bv|^2 + c_1  \alpha_1 |\boeta_1|^2 + c_2 \boeta_2 \bar \boeta_1 - c_2 \boeta_1 \bar  \boeta_2  + c_3 \gamma_1 |\bzeta_1|^2. 
\ee
Consequently,  
\be \label{lem-A-cl2} 
\Re \langle - \cA \bU, \bU \rangle_{\cH_0} =  \int_0^1 q |\bv|^2 + c_1  \alpha_1 |\boeta_1|^2 +  c_3 \gamma_1 |\bzeta_1|^2.
\ee
\end{lemma}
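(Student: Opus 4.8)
The plan is to obtain \eqref{lem-A-cl1} by a direct expansion of the inner product \eqref{innerproduct}, followed by a single integration by parts, and then to read off \eqref{lem-A-cl2} by taking real parts. First I would substitute the five components of $\cA \bU$ from its definition into $\langle -\cA \bU, \bU\rangle_{\cH_0}$. With $\cA\bU = \big(\bv,\ (a\bu_x)_x - q\bv,\ -\alpha_1\boeta_1 - \alpha_2\boeta_2 - \beta_1 \bu_x(1),\ \boeta_1,\ -\gamma_1\bzeta_1 + \mu_1\bu_x(0)\big)^T$, the $\bu$-slot of the inner product contributes $-\int_0^1 a \bv_x \bar \bu_x$, the $\bv$-slot contributes $-\int_0^1 (a\bu_x)_x \bar \bv + \int_0^1 q|\bv|^2$, and the three finite-dimensional slots contribute the ODE/boundary terms weighted by the constants $c_1 = a(1)/\beta_1$, $c_2 = a(1)\alpha_2/\beta_1$, $c_3 = a(0)/\mu_1$ of \eqref{def-c}. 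At this stage the only term not yet in the target form is $-\int_0^1 (a\bu_x)_x \bar \bv$, together with the boundary values $\bu_x(1)$ and $\bu_x(0)$ appearing in the $\boeta_1$- and $\bzeta_1$-slots.

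The key step is to integrate $-\int_0^1 (a\bu_x)_x \bar \bv$ by parts; since $\bu \in H^2(0,1)$ gives $a\bu_x \in H^1(0,1)$, this is legitimate and yields $\int_0^1 a\bu_x \bar \bv_x - a(1)\bu_x(1)\bar \bv(1) + a(0)\bu_x(0)\bar \bv(0)$. Here I would invoke the defining conditions of $\D$, namely $\bv(1) = \boeta_1$ and $\bv(0) = \bzeta_1$, to convert the two boundary terms into $-a(1)\bu_x(1)\bar \boeta_1 + a(0)\bu_x(0)\bar \bzeta_1$. On the other hand, the $\boeta_1$-slot produces $c_1\beta_1 \bu_x(1)\bar \boeta_1 + c_1\alpha_1|\boeta_1|^2 + c_1\alpha_2\boeta_2\bar \boeta_1$, the $\boeta_2$-slot produces $-c_2\boeta_1\bar \boeta_2$, and the $\bzeta_1$-slot produces $-c_3\mu_1 \bu_x(0)\bar \bzeta_1 + c_3\gamma_1|\bzeta_1|^2$. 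Using $c_1\beta_1 = a(1)$ and $c_3\mu_1 = a(0)$ from \eqref{def-c}, the two $\bu_x(1)\bar \boeta_1$ terms cancel and likewise the two $\bu_x(0)\bar \bzeta_1$ terms cancel; since moreover $c_1\alpha_2 = c_2$, what survives is exactly the right-hand side of \eqref{lem-A-cl1}. This cancellation of the boundary traces is the crux of the computation, and it is forced precisely by the normalization of the weights $c_1,c_3$ in the inner product.

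Finally, \eqref{lem-A-cl2} follows by taking real parts in \eqref{lem-A-cl1}. Writing $z = \int_0^1 a\bu_x\bar \bv_x$, the pair $-\int_0^1 a\bv_x\bar \bu_x + \int_0^1 a\bu_x\bar \bv_x$ equals $-\bar z + z$, hence is purely imaginary; similarly $c_2\boeta_2\bar \boeta_1 - c_2\boeta_1\bar \boeta_2$ is purely imaginary. Both drop out under $\Re$, leaving the real, nonnegative quantity $\int_0^1 q|\bv|^2 + c_1\alpha_1|\boeta_1|^2 + c_3\gamma_1|\bzeta_1|^2$, which is \eqref{lem-A-cl2}. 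I do not expect a genuine obstacle here beyond careful sign- and trace-bookkeeping: the single delicate point is matching the boundary terms generated by the integration by parts against those coming from the finite-dimensional slots, which works only because of the specific normalization recorded in \eqref{def-c}.
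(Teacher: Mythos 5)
Your proposal is correct and takes essentially the same route as the paper's proof: a direct expansion of $\langle -\cA \bU, \bU\rangle_{\cH_0}$, a single integration by parts on $-\int_0^1 (a\bu_x)_x \bar \bv \, dx$, cancellation of the boundary traces via $\bv(1)=\boeta_1$, $\bv(0)=\bzeta_1$ together with the normalizations $c_1\beta_1 = a(1)$ and $c_3\mu_1 = a(0)$ from \eqref{def-c}, and finally taking real parts to kill the purely imaginary pairs. Nothing needs to be changed.
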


\begin{proof} We have, for $\bU \in \D$,  by an integration by parts, 
\begin{multline*}
\langle - \cA \bU, \bU \rangle_{\cH_0} =  -  \int_0^1 a \bv_x \bar  \bu_x + \int_0^1 a \bu_x \bar \bv_x  + \int_0^1 q \bv \bar \bv - a (1) \bu_x (1) \bar \bv(1) + a(0) \bu_x(0) \bar \bv(0) \\[6pt]
+ c_1 \Big( \alpha_1 \boeta_1 + \alpha_2 \boeta_2 + \beta_1 \bu_x(1) \Big) \bar \boeta_1 - c_2  \boeta_1 \bar \boeta_2 - c_3 \Big( -\gamma_1 \bzeta_1 + \mu_1 \bu_x(0) \Big) \bar \bzeta_1.  
\end{multline*}
Using the fact $\bv(1) = \boeta_1$ and $\bv(0) = \bzeta_1$,  and \eqref{def-c}, after simplyfing the expression, we obtain \eqref{lem-A-cl1}. Assertion \eqref{lem-A-cl2} is a direct consequence of \eqref{lem-A-cl1}. 
\end{proof}
The following result deals with the existence and uniqueness of a solution to \eqref{compact_sys} via the semigroup theory.

\begin{theorem}\label{wellposedness}
Assume $(A1)$, $(A2)$, and $(A3)$ hold true. Then $\cA$ generates a strongly continuous semigroup $\big(S(t) \big)_{t \ge 0}$ of contractions. Consequently,  
\begin{itemize}
\item[i)] for  $U_0 \in \mathcal{H}_0$, there exists a unique (weak) solution $U \in C([0,+ \infty),\mathcal{H}_0)$ of problem \eqref{compact_sys}. 

\item[ii)] for $U_0 \in \mathcal{D}(\mathcal{A})$, then the unique solution $U \in C([0,+ \infty),\mathcal{H}_0)$ of problem \eqref{compact_sys} also satisfies
$$
U \in C([0,+\infty), \mathcal{D}(\mathcal{A})) \cap C^{1}([0,+\infty), \mathcal{H}_0).
$$
\end{itemize}
\end{theorem}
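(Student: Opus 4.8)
The plan is to verify the hypotheses of the Lumer--Phillips theorem for the operator $\cA$ on the Hilbert space $(\cH_0, \langle \cdot, \cdot\rangle_{\cH_0})$: namely that $\cA$ is densely defined, dissipative, and maximal (i.e. $\lambda I - \cA$ is surjective for some, equivalently every, $\lambda>0$). Dissipativity is already in hand: by \eqref{lem-A-cl2} of \Cref{lem-A}, for every $\bU \in \D$ one has $\Re\langle -\cA\bU, \bU\rangle_{\cH_0} = \int_0^1 q|\bv|^2 + c_1\alpha_1|\boeta_1|^2 + c_3\gamma_1|\bzeta_1|^2 \ge 0$, using $q\ge 0$ from $(A2)$ and the positivity of the constants from $(A3)$. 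Density of $\D$ in $\cH_0$ is routine: given $\bU=(\bu,\bv,\boeta_1,\boeta_2,\bzeta_1)\in\cH_0$, approximate $\bu$ in $H^1$ by $H^2$-functions keeping the trace value $\bu(1)=\boeta_2$ fixed, and approximate $\bv$ in $L^2$ by $H^1$-functions whose prescribed traces at $0,1$ equal $\bzeta_1,\boeta_1$ respectively; the resulting sequence lies in $\D$ and converges in $\cH_0$.

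The substantive step, and the main obstacle, is maximality. Fix $\lambda>0$ (for concreteness $\lambda=1$) and, given $\mathbf{F}=(f_1,f_2,g_1,g_2,h_1)\in\cH_0$, solve $(\lambda I - \cA)\bU=\mathbf{F}$. From the first component $\bv=\lambda\bu-f_1$, and substituting into the second component reduces the system to the scalar elliptic equation $\lambda^2\bu - (a\bu_x)_x + \lambda q\,\bu = f_2 + (\lambda+q)f_1$ on $(0,1)$. The definition of $\D$ forces $\boeta_2=\bu(1)$, $\boeta_1=\bv(1)=\lambda\bu(1)-f_1(1)$, $\bzeta_1=\bv(0)=\lambda\bu(0)-f_1(0)$; the fourth equation $\lambda\boeta_2-\boeta_1=g_2$ is then automatically satisfied because $\mathbf{F}\in\cH_0$ gives $f_1(1)=g_2$. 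The third and fifth equations, after this substitution, become Robin boundary conditions for $\bu$, of the form $\big(\lambda(\lambda+\alpha_1)+\alpha_2\big)\bu(1) + \beta_1\bu_x(1) = g_1+(\lambda+\alpha_1)g_2$ at $x=1$ and $\lambda(\lambda+\gamma_1)\bu(0) - \mu_1\bu_x(0) = h_1+(\lambda+\gamma_1)f_1(0)$ at $x=0$, whose coefficients are strictly positive by $(A3)$.

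To solve this Robin problem I would apply the (complex) Lax--Milgram theorem in $H^1(0,1)$. Multiplying by a test function $\mathbf{w}$, integrating by parts, and eliminating $\bu_x(0),\bu_x(1)$ via the Robin conditions produces the sesquilinear form
\[
B(\bu,\mathbf{w}) = \int_0^1 a\,\bu_x\bar{\mathbf{w}}_x + \int_0^1 (\lambda^2+\lambda q)\bu\bar{\mathbf{w}} + \frac{a(1)\big(\lambda(\lambda+\alpha_1)+\alpha_2\big)}{\beta_1}\bu(1)\bar{\mathbf{w}}(1) + \frac{a(0)\lambda(\lambda+\gamma_1)}{\mu_1}\bu(0)\bar{\mathbf{w}}(0),
\]
together with a bounded antilinear functional collecting the data terms. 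Boundedness of $B$ follows from $(A1)$ and the continuity of the trace $H^1(0,1)\hookrightarrow C[0,1]$; coercivity follows since $\Re B(\bu,\bu) \ge \underline{a}\|\bu_x\|_{L^2}^2 + \lambda^2\|\bu\|_{L^2}^2$, the boundary terms being nonnegative because their coefficients are positive and they multiply $|\bu(1)|^2,|\bu(0)|^2$. Lax--Milgram then yields a unique $\bu\in H^1(0,1)$; elliptic regularity upgrades this to $\bu\in H^2(0,1)$, and one recovers $\bv,\boeta_1,\boeta_2,\bzeta_1$ by the formulas above, checking that the resulting $\bU$ lies in $\D$ and satisfies $(\lambda I-\cA)\bU=\mathbf{F}$. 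This establishes $\mathrm{Range}(\lambda I-\cA)=\cH_0$.

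With density, dissipativity, and maximality verified, Lumer--Phillips gives that $\cA$ generates a $C_0$-semigroup $(S(t))_{t\ge 0}$ of contractions on $\cH_0$. Assertions i) and ii) are then the standard consequences of semigroup theory: for $U_0\in\cH_0$ the function $U(t)=S(t)U_0$ is the unique weak solution in $C([0,+\infty),\cH_0)$, and for $U_0\in\D$ it is the classical solution lying in $C([0,+\infty),\D)\cap C^1([0,+\infty),\cH_0)$ (see, e.g., Pazy). The delicate point throughout is the correct algebraic bookkeeping that converts the five coupled relations defining $\D$ into the two positive Robin conditions, since it is precisely the sign/positivity of those boundary coefficients that secures coercivity.
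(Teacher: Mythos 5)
Your proposal is correct and takes essentially the same route as the paper: dissipativity from \eqref{lem-A-cl2} of Lemma~\ref{lem-A}, maximal dissipativity via Lax--Milgram, and the conclusion via the Lumer--Phillips/Hille--Yosida theorem. The paper states the Lax--Milgram step as ``standard'' without detail; your reduction of $(\lambda I-\cA)\bU=\mathbf{F}$ to a Robin problem with strictly positive boundary coefficients (including the observation that the fourth equation holds automatically since $f_1(1)=g_2$ for $\mathbf{F}\in\cH_0$) is an accurate filling-in of exactly that omitted step.
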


\begin{proof}
The proof of Theorem \ref{wellposedness} is based on Hille-Yosida Theorem and is standard. As a consequence of \eqref{lem-A-cl2} of \Cref{lem-A}, we derive that  
$\cA$ is dissipative on $\cH_0$ and, by applying Lax-Milgram's theorem,  
$\cA$ is maximally dissipative on $\cH_0$. The conclusion now follows from the Hille-Yosida theorem, see, e.g., \cite[Thm. 3.15 \& Prop. 6.2]{engel2000one} (see also \cite[Chapter 7]{brezis2011functional}).
\end{proof}

In the alternative proof of the exponential stability of system \eqref{sys_main_int}, we use
the following result.

\begin{theorem}{(Huang-Prüss, \cite{huang1985, pruss1984})}\label{HuangPruss} Let $\left(\mathbb{S}(t)\right)_{t \geq 0}$ be a strongly continuous semigroup of contractions on a Hilbert space $H$ and let $\mathbb{A}$ be its infinitesimal generator. Then $\left(\mathbb{S} (t) \right)_{t \geq 0}$  is exponentially stable if and only if
\begin{equation}\tag{$H_1$}\label{H1}
i \mathbb{R} \subset \rho(\mathbb{A}),
\end{equation}
and
\begin{equation}\tag{$H_2$}\label{H2}
\sup _{\lambda \in \mathbb{R}}\left\|(i \lambda I-\mathbb{A})^{-1}\right\|_{\mathcal{L}(H)}<\infty,
\end{equation}
where $\mathbb{A}$ is the infinitesimal generator of $\left(S(t)\right)_{t \geq 0}$ with resolvent set denoted $\rho(\mathbb{A})$. 
\end{theorem}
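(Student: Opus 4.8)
The plan is to prove the two implications separately. The forward (necessity) direction is soft and follows from the integral representation of the resolvent, while the reverse (sufficiency) direction carries all the content and ultimately rests on Plancherel's theorem in $L^2(\mathbb{R};H)$ — the step that makes the Hilbert-space hypothesis indispensable, and the reason the equivalence is known to fail on general Banach spaces. For necessity, suppose $\|\mathbb{S}(t)\|_{\mathcal{L}(H)}\le Me^{-\nu t}$ for some $M,\nu>0$. Then for every $\lambda$ with $\Re\lambda>-\nu$ the Bochner integral $\int_0^\infty e^{-\lambda t}\mathbb{S}(t)\,dt$ converges and represents $(\lambda I-\mathbb{A})^{-1}$, with $\|(\lambda I-\mathbb{A})^{-1}\|\le M/(\Re\lambda+\nu)$; specializing to $\lambda=i\mu$, $\mu\in\mathbb{R}$, yields $i\mathbb{R}\subset\rho(\mathbb{A})$ and $\sup_{\mu\in\mathbb{R}}\|(i\mu I-\mathbb{A})^{-1}\|\le M/\nu$, i.e.\ \eqref{H1} and \eqref{H2}.

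For sufficiency, assume \eqref{H1}--\eqref{H2} and set $M_0:=\sup_{\mu\in\mathbb{R}}\|(i\mu I-\mathbb{A})^{-1}\|$. First I would promote the imaginary-axis bound to a uniform resolvent bound on a closed right half-plane. Since $(\mathbb{S}(t))$ is contractive, Hille--Yosida gives $\|(\lambda I-\mathbb{A})^{-1}\|\le 1/\Re\lambda$ for $\Re\lambda>0$; for $\Re\lambda$ small one expands the resolvent in a Neumann series about the nearest imaginary point, $(\lambda I-\mathbb{A})^{-1}=(i\mu I-\mathbb{A})^{-1}\sum_{n\ge0}\big((i\mu-\lambda)(i\mu I-\mathbb{A})^{-1}\big)^n$, which converges for $|\lambda-i\mu|<1/M_0$ and yields $\|(\lambda I-\mathbb{A})^{-1}\|\le 2M_0$ there. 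Combining the two regimes produces a constant $M$ with $\sup_{\Re\lambda\ge0}\|(\lambda I-\mathbb{A})^{-1}\|\le M$ (the same expansion even covers a thin left strip $\Re\lambda>-\delta$).

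The heart of the argument is then a Plancherel estimate. Fix $x\in\mathcal{D}(\mathbb{A})$ and $\epsilon>0$, and consider the $H$-valued function $f_\epsilon(t):=e^{-\epsilon t}\mathbb{S}(t)x$ for $t\ge0$ and $0$ otherwise; contractivity gives $\|f_\epsilon(t)\|\le e^{-\epsilon t}\|x\|$, so $f_\epsilon\in L^2(\mathbb{R};H)$, and its Fourier transform is $\widehat{f_\epsilon}(\mu)=((\epsilon+i\mu)I-\mathbb{A})^{-1}x$. Plancherel's identity then reads
\[
\int_0^\infty e^{-2\epsilon t}\|\mathbb{S}(t)x\|^2\,dt=\frac{1}{2\pi}\int_{\mathbb{R}}\big\|((\epsilon+i\mu)I-\mathbb{A})^{-1}x\big\|^2\,d\mu.
\]
To bound the right-hand side uniformly in $\epsilon$, I would use the resolvent identity $(\lambda I-\mathbb{A})^{-1}x=\lambda^{-1}x+\lambda^{-1}(\lambda I-\mathbb{A})^{-1}\mathbb{A}x$ together with the bound $M$: the integrand is $\le M^2\|x\|^2$ for $|\mu|\le1$ and $\le 2|\mu|^{-2}(\|x\|^2+M^2\|\mathbb{A}x\|^2)$ for $|\mu|>1$, hence integrable with a bound independent of $\epsilon$. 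Letting $\epsilon\downarrow0$ and using monotone convergence on the left gives $\int_0^\infty\|\mathbb{S}(t)x\|^2\,dt\le C(\|x\|^2+\|\mathbb{A}x\|^2)$ for every $x\in\mathcal{D}(\mathbb{A})$.

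It remains to upgrade this to $\int_0^\infty\|\mathbb{S}(t)x\|^2\,dt\le C\|x\|^2$ for all $x\in H$, after which the classical Datko--Pazy theorem — a $C_0$-semigroup all of whose orbits are square-integrable is exponentially stable — finishes the proof. The passage from $\mathcal{D}(\mathbb{A})$ to $H$ is where I expect the main technical friction, since the naive estimate carries the graph norm $\|\mathbb{A}x\|$. I would remove it by exploiting $\mathbb{A}\mathbb{S}(t)x=\mathbb{S}(t)\mathbb{A}x$ and contractivity to absorb the derivative term over a fixed time window, together with density and a closed-graph/uniform-boundedness argument for the solution map $x\mapsto\mathbb{S}(\cdot)x\in L^2(\mathbb{R}_+;H)$. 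Beyond this bookkeeping, the genuinely essential point — and the crux to get right — is the Plancherel step: it is exactly the Hilbert-space geometry that converts a pointwise resolvent bound on the boundary of the half-plane into $L^2$-integrability of the orbits, and hence into exponential decay.
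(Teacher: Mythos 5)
You should first note that the paper itself does not prove this statement: it is quoted as a black box from Huang and Prüss, so your attempt can only be measured against the standard proof in those references (the Gearhart--Huang--Prüss argument), which your outline indeed follows. The necessity half is correct (Laplace representation of the resolvent on $\Re\lambda>-\nu$), and so are the first steps of sufficiency: the promotion of the imaginary-axis bound to $\sup_{\Re\lambda\ge 0}\|(\lambda I-\mathbb{A})^{-1}\|\le M$ via Hille--Yosida away from the axis and a Neumann series near it, the identity $\widehat{f_\epsilon}(\mu)=((\epsilon+i\mu)I-\mathbb{A})^{-1}x$, the vector-valued Plancherel theorem, and the closing appeal to Datko--Pazy. (Note in passing that the Fourier identity holds for every $x\in H$, not just $x\in\mathcal{D}(\mathbb{A})$; the graph norm enters only through your integrability bound on the right-hand side.)

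The genuine gap is the last step, which you flag as ``bookkeeping'' but which is where your argument fails: the passage from $\int_0^\infty\|\mathbb{S}(t)x\|^2\,dt\le C\bigl(\|x\|^2+\|\mathbb{A}x\|^2\bigr)$ on $\mathcal{D}(\mathbb{A})$ to $\int_0^\infty\|\mathbb{S}(t)x\|^2\,dt\le C\|x\|^2$ on $H$ cannot be achieved by density plus a closed-graph or uniform-boundedness argument. The orbit map $x\mapsto\mathbb{S}(\cdot)x\in L^2(\mathbb{R}_+;H)$ is closed and densely defined, but a closed densely defined operator need not be bounded, and Banach--Steinhaus requires pointwise boundedness on a nonmeager set --- exactly what is unknown here. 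What your graph-norm estimate actually yields (using that $t\mapsto\|\mathbb{S}(t)x\|$ is nonincreasing for a contraction semigroup) is $\|\mathbb{S}(t)(I-\mathbb{A})^{-1}\|\le Ct^{-1/2}$, i.e.\ decay of classical solutions; since the growth bound of classical solutions can be strictly smaller than $\omega_0$, this information alone does not close the proof, so condition ($H_2$) must be reused. The standard repair avoids the graph norm entirely: fix $\beta>0$; for \emph{every} $x\in H$, contractivity gives $e^{-\beta t}\mathbb{S}(t)x\in L^2(\mathbb{R}_+;H)$, so Plancherel yields $\int_{\mathbb{R}}\|((\beta+i\mu)I-\mathbb{A})^{-1}x\|^2\,d\mu\le \pi\|x\|^2/\beta$; the resolvent identity
\begin{equation*}
((\epsilon+i\mu)I-\mathbb{A})^{-1}=\Bigl(I+(\beta-\epsilon)\,((\epsilon+i\mu)I-\mathbb{A})^{-1}\Bigr)\,((\beta+i\mu)I-\mathbb{A})^{-1},
\end{equation*}
combined with the uniform bound $M$ on the closed right half-plane, gives $\|((\epsilon+i\mu)I-\mathbb{A})^{-1}x\|\le(1+\beta M)\,\|((\beta+i\mu)I-\mathbb{A})^{-1}x\|$, hence $\int_0^\infty e^{-2\epsilon t}\|\mathbb{S}(t)x\|^2\,dt\le \frac{(1+\beta M)^2}{2\beta}\|x\|^2$ uniformly in $\epsilon\in(0,\beta]$; monotone convergence as $\epsilon\downarrow 0$ gives the $L^2$ orbit bound for all $x\in H$, and Datko--Pazy then concludes. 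With this substitution (and your $|\mu|\lessgtr 1$ splitting deleted as unnecessary), the proof is complete and is essentially the proof in the cited literature.
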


Applying \Cref{HuangPruss}, we can derive the following result. 

\begin{theorem}\label{expostability}
  Assume  $(A1)$, $(A2)$, and $(A3)$ hold true. Then the strongly continuous semigroup $\big(S(t) \big)_{t \ge 0}$ of contractions generated by $\cA$  is exponentially stable.
\end{theorem}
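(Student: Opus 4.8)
The plan is to apply \Cref{HuangPruss} to $\mathbb{A}=\cA$ on $H=\cH_0$, so it suffices to verify the two resolvent conditions \eqref{H1} and \eqref{H2}. Since the embedding $\D\hookrightarrow\cH_0$ is compact, $\cA$ has compact resolvent and its spectrum consists solely of isolated eigenvalues of finite multiplicity; hence \eqref{H1} reduces to showing that $\cA$ has no eigenvalue on the imaginary axis.

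For \eqref{H1} I would argue by contradiction: suppose $\cA\bU=i\lambda\bU$ for some $\lambda\in\mR$ and $\bU=(\bu,\bv,\boeta_1,\boeta_2,\bzeta_1)\in\D\setminus\{0\}$. Taking the real part of $\langle-\cA\bU,\bU\rangle_{\cH_0}$ and using \eqref{lem-A-cl2} of \Cref{lem-A} gives $\int_0^1 q|\bv|^2+c_1\alpha_1|\boeta_1|^2+c_3\gamma_1|\bzeta_1|^2=\Re\langle-\cA\bU,\bU\rangle_{\cH_0}=\Re(-i\lambda\|\bU\|_{\cH_0}^2)=0$. By (A2), (A3) this forces $\boeta_1=\bzeta_1=0$ and $\bv\equiv 0$ on $\omega$. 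The eigenvalue equations read $\bv=i\lambda\bu$ and $(a\bu_x)_x=-\lambda^2\bu-i\lambda q\bu$, together with $\bv(1)=\boeta_1=0$ and $\bv(0)=\bzeta_1=0$. For $\lambda\neq 0$ this yields $\bu\equiv 0$ and $\bu_x\equiv 0$ on $\omega$, so that Carathéodory uniqueness for the second-order ODE (whose coefficients lie in $L^\infty$ by (A1), (A2), written as a first-order system in $(\bu,a\bu_x)$) forces $\bu\equiv 0$ on $(0,1)$; then $\bv\equiv 0$ and $\boeta_2=\bu(1)=0$, whence $\bU=0$. The case $\lambda=0$ follows similarly, directly from $\cA\bU=0$ and the constraint $\bu(1)=\boeta_2$. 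Either way we reach a contradiction, proving \eqref{H1}.

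For \eqref{H2} I would again argue by contradiction, this being the technical core. Suppose the resolvent is not uniformly bounded; then there exist $\lambda_n\in\mR$ and $\bU_n=(\bu_n,\bv_n,\boeta_{1,n},\boeta_{2,n},\bzeta_{1,n})\in\D$ with $\|\bU_n\|_{\cH_0}=1$ and $\bF_n:=(i\lambda_n I-\cA)\bU_n\to 0$ in $\cH_0$. Taking the real part of $\langle\bF_n,\bU_n\rangle_{\cH_0}$ and invoking \eqref{lem-A-cl2} immediately gives the damping information $\int_0^1 q|\bv_n|^2\to 0$, $\boeta_{1,n}\to 0$, and $\bzeta_{1,n}\to 0$; in particular $\bv_n\to 0$ in $L^2(\omega)$. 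The goal is to upgrade this localized information into $\|\bU_n\|_{\cH_0}\to 0$, contradicting the normalization. If $(\lambda_n)$ admits a bounded subsequence, then $\cA\bU_n=i\lambda_n\bU_n-\bF_n$ is bounded in $\cH_0$, so $\bU_n$ is bounded in $\D$; by the compact embedding a subsequence converges in $\cH_0$ to some $\bU$ with $\|\bU\|_{\cH_0}=1$, and closedness of $\cA$ produces a nonzero imaginary-axis eigenvector, contradicting \eqref{H1}. The remaining, genuinely hard case is $|\lambda_n|\to\infty$: here I would run the same multiplier scheme as in the first subsection, using the cutoffs $\psi,\phi,\varphi$ and the auxiliary function $v$ solving \eqref{aeequ_v}, now applied to the stationary (resolvent) equations, to propagate $\bv_n\to 0$ from $\omega$ to the whole interval, thereby obtaining $\int_0^1|\bv_n|^2\to 0$ and $\int_0^1 a|\bu_{n,x}|^2\to 0$; finally $\boeta_{2,n}\to 0$ follows from $i\lambda_n\boeta_{2,n}-\boeta_{1,n}=f_{4,n}$, since the right-hand side tends to $0$ while $|\lambda_n|\to\infty$.

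I expect the decisive obstacle to be this high-frequency estimate in \eqref{H2}: transcribing the multiplier identities of \Cref{lem:S0}--\Cref{lemma16} from the time-dependent setting to the spectral equation, while carefully tracking the boundary contributions coming from the Wentzell conditions (the $\boeta_1$, $\boeta_2$, $\bzeta_1$ couplings at $x=0,1$) and absorbing the lower-order remainders generated by the non-constant coefficient $a$. Controlling $\boeta_{2,n}$ uniformly, and, for bounded $\lambda_n$, ensuring the limit genuinely violates \eqref{H1}, are the places where the structure of the boundary dynamics must be used most delicately.
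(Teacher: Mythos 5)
Your proposal is correct in its overall architecture and coincides with the paper's strategy: both verify \eqref{H1} and \eqref{H2} of \Cref{HuangPruss}, and your treatment of \eqref{H1} is essentially identical to \Cref{kerilambda-A} (compact resolvent via the compact embedding $\D \hookrightarrow \cH_0$, the dissipation identity \eqref{lem-A-cl2} forcing $\boeta_1=\bzeta_1=0$ and $\bv=0$ on $\omega$, then ODE uniqueness for the first-order system in $(\bu, a\bu_x)$; your separate handling of $\lambda=0$ via the constraint $\bu(1)=\boeta_2$ is sound, and your bounded-$\lambda_n$ dichotomy in \eqref{H2} actually fills in a step the paper only asserts, namely that necessarily $|\lambda_n|\to\infty$). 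Where you genuinely diverge is the high-frequency core of \Cref{pro-A}. You propose transplanting the time-domain machinery of Section 3.1 — the cutoffs $\psi,\phi,\varphi$ and the auxiliary elliptic function $v$ of \eqref{aeequ_v} — to the resolvent equation. The paper does something simpler and cutoff-free: it derives two integral identities from $\lambda_n^2 u_n + (au_{n,x})_x = o(1)$, one by testing with $h\bar u_n$ (giving \eqref{eqh:110}) and one by testing with $\varphi \bar u_{n,x}$ (giving \eqref{pro-A-p6}), combines them into the single relation $\int_0^1 (a\varphi_x - \varphi a_x)|u_{n,x}|^2\,dx = o(1)$ valid for every $C^2$ weight, and then kills the whole gradient at one stroke by choosing the exponential weight $\varphi = e^{Mx}$ with $M = (1+\Vert a_x\Vert_\infty)/\underline{a}$, so that $a\varphi_x - \varphi a_x \ge 1$. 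The boundary contributions you rightly identified as the delicate point are controlled exactly by the trace information $u_{n,x}(0), u_{n,x}(1) = \lambda_n o(1)$ and $\lambda_n u_n(0), \lambda_n u_n(1) = o(1)$ (see \eqref{eq25c'} and \eqref{pro-A-un01}, the latter obtained from $\bzeta_{1,n}\to 0$, $i\lambda_n \boeta_{2,n}=o(1)$, and the Sobolev embedding applied to $i\lambda_n u_n - v_n = o(1)$ in $H^1$), together with $\boeta_{2,n}\to 0$ exactly as you predicted.

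Two remarks on your localized scheme. First, in the spectral setting with $|\lambda_n|\to\infty$ the lower-order term that the auxiliary function $v$ was built to absorb in \Cref{lemma16} is free of charge: since $\Vert v_n\Vert_{L^2}\le \Vert U_n\Vert_{\cH_0}=1$ and $i\lambda_n u_n - v_n \to 0$ in $L^2$, one has $\Vert u_n\Vert_{L^2} = O(1/|\lambda_n|) \to 0$, so the elliptic auxiliary problem \eqref{aeequ_v} is superfluous here and your plan is more complicated than necessary. Second, a caveat if you do insist on reusing the Section 3.1 multipliers verbatim: the cutoffs $\psi,\phi,\varphi$ there are constructed for the specific geometry $\omega=[1-\varepsilon_3,1]$ with observation point $x_0=0$, so "the same multiplier scheme" would silently inherit that restriction, whereas the paper's exponential-weight argument is insensitive to the location of $\omega$ within $[0,1]$. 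Neither point is a fatal gap — the piecewise multiplier method does extend to interior $\omega$ — but the paper's route shows that at the resolvent level the propagation-from-$\omega$ step can be bypassed entirely, which is the main lesson to take from comparing the two proofs.
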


\Cref{expostability} is the main result of this subsection. It yields \eqref{th:mainth-cl1}, and \eqref{th:mainth-cl2} follows easily as shown in the previous subsection.  To be able to apply \Cref{HuangPruss}, we establish the following two results. Here is the first one.  

\begin{proposition}\label{kerilambda-A}
 Assume  $(A1)$, $(A2)$, and $(A3)$ hold true. Then $i \mathbb{R} \subset \rho(\mathcal{A})$.
\end{proposition}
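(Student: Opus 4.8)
The plan is to exploit the fact that $\cA$ has compact resolvent. Since the embedding $\D \hookrightarrow \cH_0$ is compact and $\rho(\cA)$ is nonempty (it contains the open right half-plane, $\cA$ being maximally dissipative by \Cref{wellposedness}), the spectrum of $\cA$ reduces to a discrete set of eigenvalues of finite multiplicity. Hence it suffices to show that $\cA$ has \emph{no} eigenvalue on the imaginary axis: if this holds, then for every $\lambda \in \mR$ the operator $i\lambda I - \cA$ is injective and, being Fredholm of index zero, also surjective, so that $i\lambda \in \rho(\cA)$.

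First I would write the eigenvalue equation $\cA \bU = i\lambda \bU$ componentwise for $\bU = (\bu,\bv,\boeta_1,\boeta_2,\bzeta_1) \in \D$: this yields $\bv = i\lambda \bu$, the interior relation $(a\bu_x)_x - q\bv = i\lambda \bv$, the coupling $\boeta_1 = i\lambda \boeta_2$, together with the two boundary relations produced by the third and fifth components of $\cA$. Pairing $\cA\bU = i\lambda\bU$ with $\bU$ in $\cH_0$ and taking real parts, \eqref{lem-A-cl2} of \Cref{lem-A} gives $\Re\langle -\cA\bU,\bU\rangle_{\cH_0} = -\Re\big(i\lambda\|\bU\|_{\cH_0}^2\big) = 0$, whence
\[
\int_0^1 q|\bv|^2\,dx + c_1\alpha_1|\boeta_1|^2 + c_3\gamma_1|\bzeta_1|^2 = 0.
\]
Since $q \ge 0$ and $c_1\alpha_1, c_3\gamma_1 > 0$ by \eqref{def-c} and (A3), this forces $\boeta_1 = 0$, $\bzeta_1 = 0$, and $\bv = 0$ a.e.\ on $\omega$ (where $q \ge \underline{q} > 0$).

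The heart of the argument is to propagate this vanishing to the whole interval. When $\lambda \neq 0$, from $\bv = i\lambda\bu$ we get $\bu = 0$ on the open interval $\omega$, hence also $\bu_x = 0$ there. Rewriting the interior equation as the linear first-order system $\bu_x = (a\bu_x)/a$, $(a\bu_x)_x = (-\lambda^2 + i\lambda q)\bu$, whose coefficients lie in $L^\infty(0,1)$, and invoking uniqueness for the Cauchy problem (a Grönwall/Carathéodory argument, which in one dimension replaces any PDE unique-continuation theorem) with zero Cauchy data $(\bu, a\bu_x) = (0,0)$ at a point of $\omega$, I conclude $\bu \equiv 0$ on $[0,1]$. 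Then $\bv = i\lambda\bu = 0$ and $\boeta_2 = \boeta_1/(i\lambda) = 0$, so $\bU = 0$. When $\lambda = 0$, we have $\bv = 0$ and $(a\bu_x)_x = 0$, so $a\bu_x$ is constant; the fifth relation $\mu_1\bu_x(0) = \gamma_1\bzeta_1 = 0$ forces this constant to be zero, so $\bu$ is constant, while the third relation gives $\alpha_2\boeta_2 = -\alpha_1\boeta_1 - \beta_1\bu_x(1) = 0$, hence $\boeta_2 = 0$; the constraint $\bu(1) = \boeta_2 = 0$ defining $\cH_0$ then kills the constant, so again $\bU = 0$.

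The main obstacle is precisely this propagation step under the low regularity of the coefficients ($a$ only Lipschitz, $q$ only $L^\infty$); the one-dimensional setting reduces it to uniqueness for a linear ODE system with bounded coefficients, which is routine. A secondary point requiring care is the algebraic coupling between $\boeta_2$ and $\bu(1)$ imposed by the definition of $\cH_0$, since it is this relation that eliminates the otherwise undamped constant mode in the case $\lambda = 0$.
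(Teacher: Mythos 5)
Your proposal is correct and follows essentially the same route as the paper: both reduce the claim to $\operatorname{Ker}(i\lambda I - \cA) = \{0\}$ via the compact embedding $\D \hookrightarrow \cH_0$ and Fredholm theory, extract $\boeta_1 = \bzeta_1 = 0$ and $\bv = 0$ on $\omega$ from the real part of $\langle \cA \bU, \bU\rangle_{\cH_0}$ using \Cref{lem-A}, and conclude by uniqueness for a first-order linear ODE system with $L^\infty$ coefficients (Cauchy--Lipschitz) in place of any unique-continuation machinery. Your explicit separation of the case $\lambda = 0$ (where $\boeta_1 = i\lambda\boeta_2$ gives no information and one must instead use $u_x(0)=0$, constancy of $a\bu_x$, and the constraint $\bu(1)=\boeta_2$ to kill the constant mode) is in fact slightly more careful than the paper, which asserts $\boeta_2 = 0$ and $u_x(1)=0$ without distinguishing that case.
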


\begin{proof} From the maximal dissipativity of $\cA$, we derive that $(I - \cA): \D \to \cH_0$ is continuously invertible. Using the Fredholm theory, it suffices to prove that 
 \begin{equation}\label{eq:NoeigenvalueiniR}
        \operatorname{Ker} (i \lambda I -\mathcal{A})=\{0\},\qquad \forall \, \lambda \in \mathbb{R}.
    \end{equation}
    
Let $\lambda\in\mathbb R$ and $U \in \operatorname{Ker} (i \lambda I -\mathcal{A})$. Then 
 \begin{equation}\label{eq:AUequalilambda}
     \mathcal{A} U= i \lambda U.
 \end{equation}    
This implies
    \begin{equation}
       i \lambda \|U\|_{\mathcal{H}_0}^2=  \langle \mathcal{A}U, U\rangle_{\mathcal{H}_0}. 
    \end{equation} 
Applying \Cref{lem-A}, we obtain 
\be
\int_0^1 q |v|^2 + c_1  \alpha_1 |\eta_1|^2 +  c_3 \gamma_1 |\zeta_1|^2 =0. 
\ee
We derive that
    \begin{equation}\label{kerilambda-A-eta1xi1=0}
        \eta_1=\zeta_1=0.
    \end{equation}
From \eqref{eq:AUequalilambda}, we obtain 
\begin{equation} \label{kerilambda-A-p1}
    \left\{ \begin{array}{rl}
     \lambda^2 u + (au_{x})_x &=0 \mbox{ in } (0, 1), \\
    u_x(1) &=0,\\
     u_x(0)&=0, 
           \end{array} \right.
\end{equation}
\begin{equation} \label{kerilambda-A-p2}
    i \lambda u -v =0 \mbox{ in } (0, 1), 
\end{equation}
and 
\begin{equation} \label{kerilambda-A-p3}
     \eta_2 =0. 
\end{equation}
Since $U \in \cH_0$,  we also have that $u(1) = 0$. Combining that with \eqref{kerilambda-A-p1}, one can see that, if we set $V=(u,u_x)$, then $V$ is solution of a Cauchy problem of the type $V_x=A(x)V$ with $V(x_0)=0$ for some $x_0 \in \omega$ and $x\mapsto A(x)$ bounded measurable map. By the Cauchy-Lipschitz theorem for ODEs, it follows that 
$V=0$ on $[0,1]$ and hence 
\be \label{kerilambda-A-p5}
u = 0 \mbox{ in } [0, 1]. 
\ee
This in turn implies, by \eqref{kerilambda-A-p2} that $v=0$ on $[0,1]$, which yields after considering \eqref{kerilambda-A-eta1xi1=0}, \eqref{kerilambda-A-p3}, and that
$U = 0$ on $[0,1]$.
The proof of the proposition is complete.
 \end{proof}

Proposition \ref{kerilambda-A} shows that hypothesis \eqref{H1} in  \Cref{HuangPruss} is satisfied. 
In the second result, we verify \eqref{H2} in  \Cref{HuangPruss}. The analysis is more delicate.

\begin{proposition} \label{pro-A}
Assume  $(A1)$, $(A2)$, and $(A3)$ hold true. Then     
\begin{equation}\label{resolventestimate}
\sup _{\lambda \in \mathbb{R}}\left\|(i \lambda I-\mathcal{A})^{-1}\right\|_{\mathcal{L}(\mathcal{H}_0)}<\infty.
\end{equation}
\end{proposition}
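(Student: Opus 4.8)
The plan is to establish \eqref{resolventestimate} by the standard contradiction argument that converts the uniform resolvent bound into a high-frequency observability statement, which is then closed by the very same multipliers as in \Cref{lem:S0}--\Cref{lemma16}. Suppose \eqref{resolventestimate} is false. Then there are $\lambda_n \in \mR$ and $U_n = (u_n, v_n, \eta_{1,n}, \eta_{2,n}, \zeta_{1,n}) \in \D$ with $\|U_n\|_{\cH_0} = 1$ and $F_n := (i\lambda_n I - \cA)U_n \to 0$ in $\cH_0$; I write $F_n = (f^1_n, f^2_n, g^1_n, g^2_n, h^1_n)$, each component vanishing in the appropriate norm. First I rule out bounded frequencies: if $\lambda_n \to \lambda$ along a subsequence, then from $(i\lambda I - \cA)U_n = F_n - i(\lambda_n - \lambda)U_n$, the fact that $i\lambda \in \rho(\cA)$ (\Cref{kerilambda-A}) together with $\|U_n\|_{\cH_0}=1$ would force $U_n \to 0$, a contradiction. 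Hence $|\lambda_n| \to \infty$, and the goal becomes to prove $\|U_n\|_{\cH_0} \to 0$.

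Taking real parts in $\langle F_n, U_n\rangle_{\cH_0} = i\lambda_n\|U_n\|_{\cH_0}^2 - \langle \cA U_n, U_n\rangle_{\cH_0}$ and using \eqref{lem-A-cl2} of \Cref{lem-A},
\be
\int_0^1 q|v_n|^2 + c_1\alpha_1|\eta_{1,n}|^2 + c_3\gamma_1|\zeta_{1,n}|^2 = \Re\langle F_n, U_n\rangle_{\cH_0} \le \|F_n\|_{\cH_0} \to 0,
\ee
so $\eta_{1,n}\to 0$, $\zeta_{1,n}\to 0$ and, since $q\ge\underline{q}$ on $\omega$, $v_n\to 0$ in $L^2(\omega)$. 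Reading $(i\lambda_n I - \cA)U_n = F_n$ componentwise gives $i\lambda_n u_n - v_n = f^1_n$ and $i\lambda_n v_n - (au_{n,x})_x + qv_n = f^2_n$, together with the scalar lines coming from \eqref{eq:xi1}, \eqref{eq:eta}, \eqref{eq:xi0}. From the $\eta_2$-line, $\lambda_n\eta_{2,n} = (\eta_{1,n} + g^2_n)/i \to 0$, so $\eta_{2,n}\to 0$ and $u_n(1) = \eta_{2,n}\to 0$. Moreover $i\lambda_n u_n = v_n + f^1_n$ shows $\lambda_n u_n$ is bounded in $L^2$, whence $\int_0^1|u_n|^2 = \lambda_n^{-2}\int_0^1|v_n + f^1_n|^2 \to 0$; in particular $u_n \to 0$ in $L^2(0,1)$, and $v_n(0) = \zeta_{1,n}\to 0$ forces $\lambda_n u_n(0)\to 0$.

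The core is to show the interior energy $\int_0^1(|v_n|^2 + a|u_{n,x}|^2)\to 0$, obtained by running the stationary analogues of \Cref{lemmaS0} and \Cref{lem:S0} on the Helmholtz-type equation
\be
(au_{n,x})_x + \lambda_n^2 u_n = qv_n - i\lambda_n f^1_n - f^2_n \quad\text{in }(0,1),
\ee
produced by eliminating $v_n$. Since $u_* = 0$ in this subsection and $Q_2\cap(0,1)\subset\omega$, the quantity $\mathbf{S}_1 = \int_{Q_2\cap(0,1)}|u_n|^2$ is already controlled by $\int_0^1|u_n|^2\to 0$. Testing the displayed equation against $\phi\bar u_n$ (the multiplier of \Cref{lemmaS0}) bounds the $Q_1$-energy $\mathbf{S}_0$ by $\mathbf{S}_1$ plus source and boundary contributions, while testing against $\overline{x\psi a u_{n,x}}$ (the multiplier of \Cref{lem:S0}), whose weight vanishes at both endpoints, bounds the full interior energy by $\mathbf{S}_0$ plus source contributions, the equipartition identity $\lambda_n^2|u_n|^2 = |v_n|^2 + o(1)$ (from $i\lambda_n u_n = v_n + f^1_n$) being used to convert $\lambda_n^2|u_n|^2$ into the kinetic density $|v_n|^2$. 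Chaining the two bounds gives the interior energy $\to 0$, and together with $\eta_{1,n},\eta_{2,n},\zeta_{1,n}\to 0$ this yields $\|U_n\|_{\cH_0}\to 0$, the sought contradiction.

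The main difficulty lies entirely in the high-frequency regime $|\lambda_n|\to\infty$, where the $\lambda_n$-weighted terms absent from the time-domain computation must be controlled by hand. The source term $-i\lambda_n f^1_n$ is handled by integrating by parts so as to move the derivative onto the smooth, endpoint-supported multipliers, which pairs $\lambda_n f^1_n$ with either $\lambda_n u_n$ (bounded in $L^2$) or $f^1_{n,x}\to 0$. The only surviving boundary contribution comes from the $\phi\bar u_n$ multiplier at $x=1$, namely $a(1)u_{n,x}(1)\bar\eta_{2,n}$ (the weight $x\psi$ vanishes at both endpoints and $\phi$ vanishes at $0$); using \eqref{eq:xi1} to substitute $\beta_1 u_{n,x}(1) = g^1_n - (i\lambda_n + \alpha_1)\eta_{1,n} - \alpha_2\eta_{2,n}$, this product reduces to combinations of $g^1_n,\eta_{1,n},\eta_{2,n}$ and $\eta_{1,n}\overline{(\lambda_n\eta_{2,n})}$, all of which vanish because $\eta_{1,n},\eta_{2,n}\to 0$ and $\lambda_n\eta_{2,n}\to 0$. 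Conceptually the whole argument is the stationary observability statement that damping on $\omega$ controls the entire interior energy at high frequency, which is exactly where the localization assumption $(A2)$, i.e. the one-dimensional geometric control condition, enters.
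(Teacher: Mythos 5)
Your proposal is correct, and while its outer shell coincides with the paper's proof of \Cref{pro-A} (same contradiction setup and normalization, same use of \Cref{lem-A} to obtain $\eta_{1,n},\zeta_{1,n}=o(1)$ and $\int_0^1 q|v_n|^2\,dx=o(1)$, same endpoint information $\lambda_n u_n(0),\lambda_n u_n(1)=o(1)$ and $\lambda_n\eta_{2,n}=o(1)$; your careful reduction to $|\lambda_n|\to\infty$ via \Cref{kerilambda-A} is the implicit content of the paper's one-line claim), the core multiplier scheme is genuinely different. The paper tests the reduced equation \eqref{eq-un} against $h\bar u_n$ (identity \eqref{eqh:110}) and against the \emph{global} Rellich multiplier $\varphi\bar u_{n,x}$, then eliminates the $\lambda_n^2|u_n|^2$ terms algebraically to reach $\int_0^1(a\varphi_x-\varphi a_x)|u_{n,x}|^2\,dx=o(1)$ (\eqref{eq:124}); the exponential weight $\varphi=e^{Mx}$ with $M=(1+\Vert a_x\Vert_\infty)/\underline{a}$ makes this weight $\geq 1$ and kills the whole gradient in one stroke. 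Notably, no cutoffs adapted to $\omega$ appear there, and the interior damping enters only through $\Vert qv_n\Vert_{L^2}=o(1)$. You instead transplant the Section 3.1 cutoff multipliers $x\psi a\bar u_{n,x}$ and $\phi\bar u_n$ into the frequency domain and exploit $q\geq\underline{q}$ on $\omega\supset Q_2$ to absorb $\lambda_n^2\int_0^1\phi|u_n|^2\,dx=\int_0^1\phi|v_n|^2\,dx+o(1)=o(1)$, chaining the stationary $\mathbf{S}_0$ and $\mathbf{S}_1$ estimates. What your route buys: since $x\psi a$ vanishes at both endpoints, no trace of $u_{n,x}$ at $x=0,1$ ever appears except the single term $a(1)u_{n,x}(1)\bar\eta_{2,n}$, which you correctly neutralize via $\lambda_n\eta_{2,n}=o(1)$; by contrast the paper must discard the boundary term $\left[(\varphi/a)|au_{n,x}|^2\right]_0^1$ in \eqref{pro-A-p5} knowing only $u_{n,x}(0),u_{n,x}(1)=\lambda_n o(1)$ from \eqref{eq25c'}, which is the least transparent step of the published argument. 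You are also more careful than the paper with the source $i\lambda_n f^1_n$: the paper's \eqref{eq-un} quietly absorbs it into $o(1)$ in $L^2$, whereas your integration-by-parts pairing with $\lambda_n u_n$ bounded in $L^2$, respectively with $f^1_{n,x}=o(1)$ in $L^2$ --- both legitimate precisely because \eqref{eq25a} holds in $H^1$ --- is the honest fix.

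Two touch-ups when you write this out. First, your remark that $\mathbf{S}_1=\int_{Q_2\cap(0,1)}|u_n|^2\,dx$ is ``already controlled'' by $\Vert u_n\Vert_{L^2}^2\to 0$ is true but not by itself what closes the argument: the $\phi\bar u_n$ test produces the $\lambda_n^2$-weighted version of $\mathbf{S}_1$, and the decisive estimate is $\int_{Q_2\cap(0,1)}|v_n|^2\,dx\leq\underline{q}^{-1}\int_0^1 q|v_n|^2\,dx=o(1)$ --- exactly where the localization in (A2) enters. Your plan does contain this mechanism (the equipartition conversion together with $v_n\to 0$ in $L^2(\omega)$), so present it as the main point rather than as an aside. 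Second, state explicitly that $f^1_n\to 0$ in $H^1(0,1)\hookrightarrow C^0([0,1])$, since this justifies both the trace step $f^1_n(0)\to 0$ (hence $\lambda_n u_n(0)=o(1)$) and the $f^1_{n,x}$ pairing.
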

\begin{proof}
    We argue by contradiction. Suppose that \eqref{resolventestimate} does not hold. Then, there exist a sequence $(\lambda_n)_{n \in \mathbb{N}}$ of real numbers and a bounded sequence $(F_n)_{n \in \mathbb{N}} \in \mathcal{H}_0$ such that
\begin{equation}\label{eq:contradiction}
\|(i \lambda_n I-\mathcal{A})^{-1}F_n\|_{\mathcal{H}_0}\xrightarrow[n\to \infty]{} \infty.
\end{equation}
%Since the function $\lambda\mapsto (i\lambda I-\mathcal{A})^{-1}$ is well-defined on $\mathbb{R}$ by Proposition~\ref{kerilambda-A} and continuous by the properties of the resolvent, 

Moreover, one necessarily has that $|\lambda_n|\to\infty$. On the other hand, it is immediate to deduce from \eqref{eq:contradiction} that
there exists a sequence $(U_n)_{n\in \mathbb N}$ in $\mathcal{D}(\mathcal{A})$ such that
\begin{equation*}
    (i \lambda_n I-\mathcal{A})^{-1}F_n=U_n.
\end{equation*}
Normalizing the previous equation with respect to $(U_n)_{n\in \mathbb N}$ gives that
\begin{gather}
    \left|\lambda_n\right| \rightarrow+\infty, \label{lambdangoestoinf}\\
    \|U_n\|_{\mathcal{H}_0}=1, \label{normequalsone}\\
    (i \lambda_n I - \mathcal{A}_n) U_n = o(1) \text{ in } \mathcal{H}_0.\label{systozero}
\end{gather}
Denote  $U_n=(u_n,v_n, \eta_{1,n}, \eta_{2,n}, \zeta_{1,n})^T$. Taking the real part of the inner product of \eqref{systozero}
 with $U_n$ in $\mathcal{H}_0$ gives that
\begin{equation}\label{eq:RAUU=o}
    \Re \langle \mathcal{A} U_n, U_n\rangle_{\mathcal{H}_0} =o(1).
\end{equation}
Applying \Cref{lem-A}, we obtain 
\begin{equation}\label{Edotpetito}
     \int_0^1 q |v_n|^2 \, dx +  \frac{a(1) \alpha_1}{\beta_1}|\eta_{1,n}|^2 + \frac{a(0)\gamma_1}{\mu_1} |\zeta_{1,n}|^2 =o(1).
\end{equation}
From \eqref{systozero}, we have
\begin{subnumcases}{}
     i \lambda_n u_n- v_n = o(1) \mbox{ in } H^1(0, 1) , \label{eq25a}\\
    i \lambda_n v_n -(au_{nx})_x = o(1) \mbox{ in } L^2(0, 1),  \label{eq25b}\\
    i \lambda_n \eta_{1,n}  +\alpha_2 \eta_{2,n} +\beta_1 u_{nx}(1)=o(1) \mbox{ in } \mC, \label{eq25c}\\
   i \lambda_n \eta_{2,n}  = o(1) \mbox{ in } \mC,  \label{eq25d} \\
    i \lambda_n \zeta_{1,n}  - \mu_1 u_{nx}(0) =o(1) \mbox{ in } \mC. \label{eq25e}
\end{subnumcases}
It follows from \eqref{Edotpetito} that 
\begin{equation} \label{eq25c'} 
u_{nx}(1) = \lambda_n o(1) \quad \mbox{ and } \quad u_{nx} (0) = \lambda_n o(1). 
\end{equation}
From \eqref{eq25a} and \eqref{eq25b}, we derive that 
\be \label{eq-un}
\lambda_n^2 u_n + (a u_{n, x})_x = o(1) \mbox{ in } L^2(0, 1). 
\ee
From \eqref{eq25a}, it follows since $v_n(0)=\xi_{1,n}$ that
\begin{equation*}
    | i \lambda_nu_n(0) - \zeta_{1,n}| \leq \| i \lambda_nu_n-v_n\|_{\infty} \leq  C  \| i \lambda_nu_n-v_n\|_{H^1(0, 1)} =o(1). 
\end{equation*}
This yields, by \eqref{Edotpetito}, that
\begin{equation} \label{pro-A-p1}
|i \lambda_nu_n(0)| \leq o(1)+ |\zeta_{1,n}| \mathop{=}^{\eqref{Edotpetito}} o(1). 
\end{equation}
Combining \eqref{eq25d} and \eqref{pro-A-p1} yields 
\begin{equation} \label{pro-A-un01}
|\lambda_nu_n(0)| + |\lambda_nu_n(1)| =  o(1). 
\end{equation}

Let $h \in C^1([0, 1],\mR)$. Multiplying \eqref{eq-un} by $h \bar u_n$ and integrating by parts, we obtain after using \eqref{eq25c'} and \eqref{pro-A-p1}, we obtain 
\begin{equation}\label{eqh:110}
\int_0^1 h\big( a|u_{nx}|^2 - \lambda_n^2 |u_n|^2 \big)\, dx +\int_0^1 h_xa u_{nx}\bar{u}_n \, dx = o(1).
\end{equation}

After choosing $h=1$, we derive from \eqref{eqh:110} that 
\begin{equation}\label{unequivn}
 \int_0^1 a|u_{nx}|^2 \, dx-\int_0^1 \lambda_n^2 |v_n|^2 \, dx =o(1).
\end{equation}

Let $\varphi \in C^1([0, 1],\mR)$. Multiplying \eqref{eq-un} by $\varphi \bar u_{n, x}$, we have 
\be
\int_0^1 \Big( \lambda_n^2 \varphi u_n \bar u_{n, x} + (a u_{n, x} ) \varphi \bar u_{n, x} \Big) \, dx = \int_0^1 o(1)_{L^2(0, 1)} \varphi \bar u_{n, x} \, dx.   
\ee
We have 
\be \label{pro-A-p4}
\Re \left\{ \int_0^1 \lambda_n^2 \varphi u_n \bar u_{n, x} \, dx \right\} = \frac{1}{2} \int_0^1 \lambda_n^2 \varphi \partial_x  |u_{n}|^2 \, dx = - \frac{\lambda_n^2}{2} \int_0^1 |u_n|^2 \varphi_x + o(1)  
\ee
by \eqref{eq25c'} and \eqref{pro-A-un01}. 
 
We also have 
\begin{multline}
   \Re \int_0^1 (au_{nx})_x \varphi \bar{u}_{nx} \, dx = \Re \int_0^1 (au_{nx})_x \frac{\varphi}{a}  a \bar{u}_{nx} \, dx = \frac{1}{2}\int_0^1 \frac{\varphi}{a} (|au_{nx}|^2)_x\, dx
\\[6pt]
= - \frac{1}{2}\int_0^1 (\varphi/a)_x |au_{nx}|^2\, dx + \left[ (\varphi/a) | a u_{nx}|^2\right]_0^1.
\end{multline} 
Using \eqref{eq25c'}, we derive that 
\begin{equation}\label{pro-A-p5}
  \Re \left(  \int_0^1 (a u_{nx})_x\varphi \bar{u}_{nx} \, dx\right) = - \frac{1}{2}\int_0^1 (\varphi/a)_x |au_{nx}|^2\, dx  + o(1). 
\end{equation}
Combining \eqref{pro-A-p4}  and \eqref{pro-A-p5} and using the fact $\| U_n \|_{\cH_0} =1$, we obtain 
\be \label{pro-A-p6}
\frac{\lambda_n^2}{2} \int_0^1 \varphi_x |u_n|^2 + \frac{1}{2}\int_0^1 (\varphi/a)_x |au_{nx}|^2\, dx =  o(1).
\ee
Using \eqref{eqh:110} with $h = \varphi_x$, we obtain 
\begin{equation}
\int_0^1 \varphi_x \big( a|u_{nx}|^2 - \lambda_n^2 |u_n|^2 \big)\, dx +\int_0^1 \varphi_{xx} a u_{nx}\bar{u}_n \, dx = o(1).
\end{equation}
Since 
$$
\int_0^1 \varphi_{xx} a u_{nx}\bar{u}_n \, dx = o(1)
$$
by \eqref{unequivn} and the fact that $\|U_n \|_{\cH_0} =1$, it follows that 
\begin{equation} \label{pro-A-p7}
\int_0^1 \varphi_x \big( a|u_{nx}|^2 - \lambda_n^2 |u_n|^2 \big)\, dx   = o(1).
\end{equation}
Multiplying \eqref{pro-A-p7} by $-1/2$ and adding the expression obtained with \eqref{pro-A-p6}, we obtain 
\be
\label{pro-A-p8}
\lambda_n^2 \int_0^1 \varphi_x |u_n|^2 - \int_0^1 \varphi a_x  |u_{nx}|^2\, dx =  o(1) \mbox{ in } \mR.
\ee
Combining \eqref{pro-A-p7} and \eqref{pro-A-p8} yields 
\begin{equation}\label{eq:124}
    \int_0^1 (a \varphi_x - \varphi a_x) |u_{nx}|^2 \, dx  = o(1). 
\end{equation}
Then \eqref{eq:124} holds true for every $C^2$ real-valued function $\varphi$ defined on 
$[0,1]$. Consider now $\varphi(x)=e^{Mx}$ for $x\in [0,1]$ where 
$M=\frac{1+\Vert a_x\Vert_{\infty}}{\underline{a}}$. It is immediate to check that, for a.e. $x\in [0,1]$,
\[
a(x) \varphi_x(x) - \varphi(x) a_x(x)=\varphi(x)(Ma(x)-a_x(x))\geq e^{Mx}(M\underline{a}-\Vert a_x\Vert_{\infty})\geq 1.
\]
From \eqref{eq:124} with such a function $\varphi$, we obtain that
\begin{equation}\label{ff148}
    \int_0^1  |u_{nx}|^2 \, dx =o(1).
\end{equation}

From \eqref{Edotpetito}, \eqref{eq25a}, and \eqref{eq25d}, we obtain a contradiction with the fact 
\begin{equation*}
    \|U_n\|_{\mathcal{H}_0}=o(1),
\end{equation*}
The proof is complete. %{\color{red} Rajouter une phrase explicative.}C'est bon
\end{proof}

\section{Stability analysis of a related system}  \label{sect-Related}

In this section we consider dynamical systems associated to \eqref{sys_main_int} in section $2$ as considered in \cite{chitour2023lyapunov}. We establish similar exponential stability results, under the weaker condition that the damping function $q$ is now localized , i.e., it satisfies Assumption~(A2). We will only deal with the system given by a wave equation with second order dynamical boundary conditions at both ends and we prove exponential convergence of solutions towards the attractor. Namely, we consider the following sytem
\begin{subnumcases}{}\label{system_without_integrator}
    u_{tt}-(au_{x})_x=-qu_t,\quad  (x,t)\in[0,1]\times \mathbb{R}_+,\\
    u_t(t,1)=\eta(t), \label{withIntetq}\\
    u_t(t,0)= \zeta(t),\label{withIntxi}\\
    \dot{\eta}(t)=-u_x(t,1)-q_1 \eta(t),\label{NIboundaryeta}\\
    \dot{\zeta}(t)=u_x(t,0)-q_0 \zeta(t), \label{NIboundaryxi}\\
    u(0,\cdot)=u_0,\quad u_t(0,\cdot)=u_1, \\
    \zeta(0)=\zeta_0, \quad \eta(0)=\eta_0. 
\end{subnumcases}

The energy of this system is given by 
\begin{align}\label{energy_without_integrator}
    E(t)=\underbrace{\frac{1}{2}\int_0^1 (u_t^2+au_x^2) \, dx }_{E_0(t)} + \underbrace{\frac{1}{2}\eta(t)^2+\frac{1}{2} \zeta(t)^2}_{E_b(t)}.
\end{align}
Similar to the previous section, along the strong solutions the energy functional $E(\cdot)$ is a non-increasing function of $t$ and its derivative satisfies 
\begin{align}\label{NIEdot}
    \dot{E}(t)=-\int_0^1qu_t^2\, dx-q_1\eta(t)^2-q_0\zeta(t)^2.
\end{align}
System \eqref{system_without_integrator} has been studied in the multidimensional setup  in \cite{buffe2017stabilization}, where the author established logarithmic decay rate using less restrictive hypotheses.
\begin{proposition}
Assume  $(A1)$ and $(A2)$ hold true and the constants $q_0$ and $q_1$ are positive. Then 
\begin{enumerate}
    \item there exist $M, \nu>0$, such that for every weak solution of \eqref{sys_main_int} and every $t \geq 0$, it holds that
\begin{equation*}
    E(t)\leq ME(0)e^{- \nu t},
\end{equation*}
where $E$ is the energy of system \eqref{system_without_integrator} defined in \eqref{energy_without_integrator}.
    
    \item it holds 
    $$
    \max_{x \in [0,1]} |u(t,x)-u_*|^2 \leq ME(0)e^{- \nu t},
    $$
where $u_*$ is given by
\begin{equation}\label{NIFormulaofu_*}
    u_*=\frac{1}{a(0)q_0+a(1)q_1} \left( \int_0^1 \big(u_1(x)+qu_0(x)\big) \, dx +a(0)\Big(\zeta_0+q_0 u_0(0)\Big) + a(1)\Big(\eta_0 +q_1 u_0(1)\Big)     \right).
\end{equation}
\end{enumerate}
\end{proposition}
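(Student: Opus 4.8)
The plan is to mirror the two-part structure of \Cref{th:mainth}: first identify the constant $u_*$ and prove the pointwise bound $|u(t,x)-u_*|^2\le CE(t)$ for all $t\ge 0$ and all $x$, and then establish the exponential decay of the energy $E$ defined in \eqref{energy_without_integrator} by the multiplier scheme of \Cref{sect-L2}. The present boundary conditions are in fact simpler than those of \eqref{sys_main_int}, since both $\eta$ and $\zeta$ are directly damped and no integrator component is present; the only genuinely new ingredient is the conservation law that pins down $u_*$.

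For the first part, I would integrate the first line of \eqref{system_without_integrator} over $(0,1)$ and substitute $u_x(t,1)=-\dot\eta-q_1\eta$ and $u_x(t,0)=\dot\zeta+q_0\zeta$ from \eqref{NIboundaryeta}--\eqref{NIboundaryxi}. Using $\int_0^1 qu_t\,dx=\tfrac{d}{dt}\int_0^1 qu\,dx$ together with $\eta=u_t(t,1)$ and $\zeta=u_t(t,0)$, every term becomes a total time derivative, so that
\[
I(t):=\int_0^1\big(u_t+qu\big)\,dx+a(1)\big(\eta+q_1u(t,1)\big)+a(0)\big(\zeta+q_0u(t,0)\big)
\]
is constant along trajectories. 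Evaluating $I$ on the attractor $u\equiv u_*$, $u_t=\eta=\zeta=0$ gives $I(t)=u_*\big(\int_0^1 q\,dx+a(0)q_0+a(1)q_1\big)$, and equating with $I(0)$ determines $u_*$; here $I(0)$ is exactly the parenthesized expression in \eqref{NIFormulaofu_*} (note that the balance forces the $\int_0^1 q\,dx$ term into the denominator). To obtain the pointwise bound I would split $u(t,x)-u_*=\big(u(t,x)-\bar u(t)\big)+\big(\bar u(t)-u_*\big)$ with $\bar u(t)=\int_0^1 u(t,\cdot)\,dx$: the first term is $O(E(t)^{1/2})$ by the fundamental theorem of calculus and $\|u_x(t,\cdot)\|_{L^2}\le CE(t)^{1/2}$, while for the second I would insert $\bar u(t)$ into $I(t)=I(0)$ and observe that $I(t)-\big(\int_0^1 q+a(0)q_0+a(1)q_1\big)\bar u(t)$ consists only of energy components ($\int_0^1 u_t$, $\eta$, $\zeta$) and of deviations of $u$ from its mean, all $O(E(t)^{1/2})$.

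For the exponential decay I would reproduce the three-lemma argument of \Cref{sect-L2} with the multipliers $x\psi au_x$, $\phi(u-u_*)$, and the auxiliary function $v$ solving an elliptic problem $(av_x)_x=\varphi(u-u_*)$ with boundary conditions adapted to \eqref{NIboundaryeta}--\eqref{NIboundaryxi} so that the problem is solvable and the spurious boundary terms vanish. The analogues of \eqref{eslemma22}, \eqref{eq36} and \eqref{aeS1} go through with the obvious modifications, the quantities $\eta^2$ and $\zeta^2$ being controlled by $-\dot E$ through \eqref{NIEdot} exactly as $\eta_1^2,\zeta_1^2$ were; the boundary term at $x=1$, which in \Cref{lemmaS0} produced the coupling with the integrator, is now treated by multiplying \eqref{NIboundaryeta} by the relevant trace and integrating by parts in time, yielding only $-\int\eta^2$ and time-boundary products of the form $(u(t,1)-u_*)\eta$ that are bounded by $CE$ thanks to the pointwise estimate already in hand. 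Assembling the three estimates and absorbing gives $\int_S^T E(t)\,dt\le CE(S)$ for all $0\le S\le T$, and \cite[Theorem 8.1]{komornik1994exact} yields the exponential decay, after which the second assertion follows by combining it with $|u(t,x)-u_*|^2\le CE(t)$. Alternatively, after reducing to $u_*=0$ by a constant shift, one could run the frequency-domain argument of \Cref{sec:frequency} on the invariant hyperplane $\{I=0\}$, on which $E$ is a genuine norm.

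The main obstacle, and the only real departure from \Cref{th:mainth}, is the identification and use of $u_*$: there the relation $u(t,1)-\eta_2=u_*$ exhibited $u_*$ directly as a state component, whereas here $u_*$ is recovered only from the global balance $I$, and one must verify that $u(t,\cdot)$ approaches this single constant with an error controlled by $E(t)^{1/2}$. Since this pointwise estimate is precisely what is needed to close the boundary terms in the multiplier computation, the two parts of the proof are logically intertwined and should be carried out in the order indicated above.
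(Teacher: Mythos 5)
Your proposal is correct, and on the identification of $u_*$ it takes a genuinely different route from the paper. For the energy decay you follow the paper's multiplier scheme, with one variant: you multiply by $\phi(u-u_*)$ and solve $(av_x)_x=\varphi(u-u_*)$, whereas the paper multiplies by $\phi\big(u-u(t,1)\big)$ and takes $v$ as in \eqref{PrNIEQ:eequ_v} with $v_x(0)=v(0)=0$. The paper's choice makes the boundary term at $x=1$ vanish identically, so no a priori information about $u_*$ is needed for the decay; your choice instead requires the pointwise bound $|u(t,x)-u_*|^2\le CE(t)$ beforehand, which you correctly supply, so your ordering is consistent (the key point being that you \emph{define} $u_*$ algebraically from $I(0)$, not as a limit, which removes any circularity). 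For $u_*$ itself, the paper first establishes exponential decay, then integrates \eqref{NIboundaryeta}--\eqref{NIboundaryxi} over $(0,+\infty)$ and evaluates the double integral $J$ of \eqref{formulofJ}; you instead exhibit the conserved functional $I(t)$, which is cleaner, identifies $u_*$ without any decay input, and in addition gives the estimate $\big|\int_0^1 u(t,\cdot)\,dx-u_*\big|\le CE(t)^{1/2}$ valid for all $t\ge 0$ — exactly what your multiplier variant needs.

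Moreover, your balance law exposes an error in the paper's own computation, and your parenthetical remark about the denominator should be stated as a correction rather than an aside. In \eqref{formulofJ} the paper evaluates $\int_0^1\int_0^{+\infty}qu_t\,dt\,dx$ as $-\int_0^1 qu_0\,dx$, dropping the limit term $\lim_{T\to+\infty}\int_0^1 q\,u(T,x)\,dx=u_*\int_0^1 q\,dx$, which does not vanish (it is justified by the uniform convergence $u(T,\cdot)\to u_*$). Carrying this term through \eqref{twoequofu_*} yields precisely your formula
\[
u_*=\frac{\int_0^1\big(u_1+qu_0\big)\,dx+a(0)\big(\zeta_0+q_0u_0(0)\big)+a(1)\big(\eta_0+q_1u_0(1)\big)}{\int_0^1 q\,dx+a(0)q_0+a(1)q_1},
\]
so the denominator of \eqref{NIFormulaofu_*} is missing the term $\int_0^1 q\,dx$. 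A steady state confirms this: for $u\equiv c$, $u_1=0$, $\eta_0=\zeta_0=0$, your formula returns $u_*=c$, while \eqref{NIFormulaofu_*} returns $c\big(\int_0^1 q\,dx+a(0)q_0+a(1)q_1\big)/\big(a(0)q_0+a(1)q_1\big)\neq c$, since $\int_0^1 q\,dx>0$ under (A2). Thus your proof is not only valid but corrects the constant identified in the statement.
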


The proof follows the same lines of the proof of Theorem \ref{th:mainth}. Hence, we give the main steps only.
\begin{proof}
The proof of the first statement is similar to the previous case and we only give the main steps.

Using the multiplier $x\psi a u_x$  and proceding in the same way as in Lemma \ref{lem:S0} we show that 
\begin{equation} \label{PreNIfirstlemma}
\int_S^T E(t) \, dt \leq CE(S)+C \underbrace{\int_S^T \int_{Q_1 \cap(0,1)}(u_t^2+au_x^2) \, dx  \, dt}_{\mathbf{S}_0}.
\end{equation}
Then, using the multiplier $\phi \big( u - u(t,1) \big)$ and followings the computations of Lemma \ref{lemmaS0} we show that 
    \begin{equation}\label{PrNIEQ:eq36}
\mathbf{S}_0 \leq  C \underbrace{\int_S^T \int_{(0,1)\cap Q_2}|u-u(t,1)|^2 \, dx  \, dt}_{\mathbf{S}_1}+C E(S).
\end{equation}

Finally, we consider $v$, the solution of
\begin{equation}\label{PrNIEQ:eequ_v}
\left\{\begin{array}{ll}
(av_{ x})_x=\varphi (u-u(t,1)), \quad & x \in(0,1) \\
v_x(0)=v(0)=0.
\end{array}\right.
\end{equation}
We mention that $v$ satisfies the estimates \eqref{v1andv0}-\eqref{aelemma_vtL}.
Using $v$ as a multiplier and following the same steps as in Lemma \ref{lemma16} we show that for every $\delta_2>0$ we have that
\begin{equation}\label{PrNILEM:eS1}
\mathbf{S}_1 \leq C \delta_2 \int_S^T E(t) \, dt+\frac{C}{\delta_2} E(S).
\end{equation}

Up to choosing $\delta_2$ small enough, we deduce that 
\begin{equation*}
    \int_S^T E(t) \, dt \leq CE(S),
\end{equation*}
and one then concludes by using \cite[Theorem 8.1]{komornik1994exact}. 
\end{proof}

We now prove the second statement. From \eqref{withIntetq} we have that 

\begin{equation}\label{IntegralOfEta}
    u(t,1)-u_0(1)=\int_0^t \eta(s) \, ds,
\end{equation}
since $\eta(\cdot)$ decays exponentially to zero the integral in the right hand converges and so does $u(t,1)$ towards a value that we denote $u_*$. From this it follows that for every $x\in [0,1]$, $u(t,x)$ converges exponentially towards $u_*$. Indeed, as we have that 
\begin{equation}
    \begin{aligned}
\left|u(t, x)-u_*\right|^2 & \leq 2|u(t, x)-u(t, 1)|^2+2 \eta_2^2(t)  \\
& \leq 2 \int_0^1 u_x^2(t, x)\, d x+2 \eta_2^2(t) \\
& \leq \frac{4}{\underline{a}} E(t)+2 \eta_2^2(t)\\
& \leq C E(t),
\end{aligned}
\end{equation}
where $E(\cdot)$ is exponentially convergente towards 0.

We now identify the value of $u_*$.  From \eqref{withIntxi} we have that 
\begin{equation}\label{IntegralOfXi}
    u(t,0)-u_0(0)=\int_0^t \zeta(s)\, ds, 
\end{equation}
since $\zeta(\cdot)$ also decays exponentially to zero the right hand side integral converges as $t$ tends to infinity and we have from  \eqref{IntegralOfEta} and \eqref{IntegralOfXi} that 
\begin{equation}\label{eq:135}
 u_*=\int_0^{+\infty} \eta(t)\, dt+u_0(1)=\int_0^{+\infty} \zeta(t)\, dt +u_0(0).
\end{equation}

Integrating \eqref{NIboundaryeta} and \eqref{NIboundaryxi} with respect to $t$ from $0$ to $+\infty$ and using the results in \eqref{eq:135} we obtain that 

\begin{equation}\label{twoequofu_*}
 u_*=\frac{-1}{q_1}\int_0^{+\infty} u_x(t,1)\, dt +\frac{\eta_0}{q_1} + u_0(1)=\frac{1}{q_0}\int_0^{+\infty} u_x(t,0)\, dt+\frac{\zeta_0}{q_0} +u_0(0).
\end{equation}

On the other hand, we have that 

\begin{equation}\label{formulofJ}
    \begin{aligned}
    J:&= a(1)\int_0^{+\infty} u_x(t,1)\, dt - a(0)\int_0^{+\infty} u_x(t,0)\, dt\\
    &= \int_0^{+\infty} \int_0^1 (au_x)_x \, dx \, dt \\
    &= \int_0^{+\infty} \int_0^1  u_{tt} \, dx \, dt +\int_0^{+\infty} \int_0^1   qu_t \, dx \, dt\\
     &=  \int_0^1\int_0^{+\infty}  u_{tt} \, dt \, dx +\int_0^1\int_0^{+\infty}    qu_t \, dt \, dx\\
     &=- \int_0^1  u_1(x) \, dx -\int_0^1 qu_0(x) \, dx,
    \end{aligned}
\end{equation}
where the last equality holds because

\begin{equation}
    \begin{aligned}
        \int_0^1\int_0^{+\infty}  u_{tt} \, dt \, dx&= \lim_{T\rightarrow + \infty} \int_0^1\int_0^{T}  u_{tt} \, dt \, dx\\
        &= \lim_{T\rightarrow + \infty} \int_0^1 u_t(T,x) \, dx - \int_0^1 u_1(x) \, dx, 
    \end{aligned}
\end{equation}

and 
\begin{equation}
    \begin{aligned}
        \Big|\int_0^1 u_t(T,x) \, dx\Big| \leq \left( \int_0^1 |u_t(T,x)|^2 \, dx \right)^{\frac{1}{2}} \leq C E(T)^\frac{1}{2} \xrightarrow[T\to\infty]{} 0.
    \end{aligned}
\end{equation}

Then, from \eqref{twoequofu_*} and \eqref{formulofJ} we deduce \eqref{NIFormulaofu_*}.

\begin{remark} \rm
    The results of Proposition 2 and Proposition 3 in \cite{chitour2023lyapunov}, which address related systems, remain valid. Their proofs are similar to those presented here and are therefore omitted.
\end{remark}

\section{Analysis  in the  $L^p$ setting - Proof of \Cref{thm-Lp}}

This section consisting of three subsections is devoted to the proof of \Cref{thm-Lp}. In the first and second subsections, 
we establish the well-posedness in the $L^p$-setting for $p \ge 1$ via the D'Alembert formula and via the well-posedness of the corresponding hyperbolic system coupled with a differential system using Riemann invariants. The proof of \Cref{thm-Lp} is given in the third subsection.

\subsection{Well-posedness of \eqref{sys_main_int} in an $L^p$ framework for  $p\in [1,\infty]$ via d'Alembert formula} \label{sect-WP}

We have first of all to define the concept of weak (strong, respectively) solution of \eqref{sys_main_int} in an $L^p$ framework. The main ingredient to achieve such a goal relies on the d'Alembert formula coupled with an appropriate change of functions  yielding Dirichlet Boundary conditions.

The first step consists in considering 
the control system associated with \eqref{sys_main_int} defined as the following inhomogeneous system
\begin{subnumcases}{\label{sys_main_int_cont}}
z_{tt} - z_{xx}  = -qz_t,\quad (t,x)\in \mathbb{R}^+ \times (0, 1), 
\label{sys_main_int:cont} \\
z_t(t,1)=h_1(t),\\
z_t(t,0)=h_0(t),\\
z(0, \cdot) = z_0, \quad z_t (0, \cdot) = z_1 \quad \mbox{ on } (0, 1), 
\end{subnumcases}
where the inputs $h_0,h_1$ belong to $W^{1,p}_{loc}(\mathbb{R}_+,\mathbb{C})$ and the initial conditions $z_0,z_1$ belong to $W^{1,p}([0,1],\mathbb{C})$ and $L^p([0,1],\mathbb{C})$ respectively, with $p\in [1,\infty]$. By performing the change of functions
\[
z(t,x)=w(t,x)+xL_0(t)+(1-x)L_1(t), \quad {(t,x)\in \mathbb{R_+}\times [0,1]},
\]
where $L_0,L_1$ are defined by 
\[
L_0(t)=z_0(1)+\int_0^th_0(\tau)\, d\tau, \quad L_1(t)=z_0(0)+\int_0^th_1(\tau)\, d\tau,
\] 
it is immediate to see that $z$ is solution of \eqref{sys_main_int} if and only if 
$w$ is solution of the inhomogeneous problem on $\mathbb{R}^+ \times (0, 1)$
\begin{subnumcases}{\label{sys_main_int-w}}
w_{tt}(t,x) - w_{xx}(t,x)  = -q(x)w_t(t,x)-x[\dot h_1(t)+q(x)h_1(t)]-(1-x)[\dot h_0(t)+q(x)h_0(t)],
\label{sys_main_int:cont} \\
w(t,1)=0, \ t\geq 0,\\
w(t,0)=0,\ t\geq 0,\\
w(0,x) = z_0(x)-xz_0(1)-(1-x)z_0(0),\ x\in [0, 1],\\
w_t (0,x) = z_1(x)-xh_1(0)-(1-x)h_0(0),\ x\in [0, 1].
\end{subnumcases}
We first define the real-valued functions $e_0,e_1$ on $\mathbb{R}$ as the $2$-periodic of the odd extensions to $[-1,1]$ of the functions $x\mapsto x$ and $x\mapsto 1-x$ respectively, as well as $\widetilde{q}$, the $2$-periodic of the even extension to $[-1,1]$ of the function $q$. 
Then, using the standard reflexion-extension procedure as described in \cite[page 63]{strauss2007} ($2$-periodic of the odd extensions to $[-1,1]$ of $w$, $z_0$, $z_1$ still denoted as they stand), we can now express the solutions of \eqref{sys_main_int-w} almost eveywhere on 
$\mathbb{R}_+\times \mathbb{R}$ 
as 
\begin{align*}
w(t,x)=\frac12[w_0(x+t)+w_0(x-t)]+\frac12\int_{x-t}^{x+t}w_1(s)\, ds\\
+\frac12\int_0^t\int_{x-(t-\tau)}^{x+(t+\tau)}[-\widetilde{q}(s)w_t(\tau,s)+f(\tau,s)]\, d\tau\, ds,
\end{align*}
where the initial conditions are given by
\[
w_0(x)=z_0(x)-e_0(x)z_0(1)-e_1(x)z_0(0),\quad w_1(x)=z_1(x)-e_0(x)h_1(0)-e_1(x)h_0(0),
\]
and the inhomogeneous term $f$ is defined as
\[
f(t,x)=-e_0(x)[\dot h_1(t)+\widetilde{q}(x)h_1(t)]-e_1(x)[\dot h_0(t)+\widetilde{q}(x)h_0(t)], \quad {(t,x)\in \mathbb{R_+}\times [0,1]}.
\]
By an easy computation, one gets that, a.e. on $\mathbb{R}_+\times \mathbb{R}$, 
\begin{align}
z(t,x)&=\frac12[z_0(x+t)+z_0(x-t)]+\frac12\int_{x-t}^{x+t}z_1(s)\, ds
-\frac12\int_0^t\int_{x-(t-\tau)}^{x+(t-\tau)}\widetilde{q}(s)z_t(\tau,s)\, d\tau\, ds\nonumber\\
& +E^0(x,t)z_0(0)+E^1(x,t)z_1(0)\nonumber\\
& +F^0(x,t)h_0(0)+F^1(x,t)h_1(0)+\int_0^t[G^0(x,t,\tau)h_0(\tau)+G^1(x,t,\tau)h_1(\tau)]\, d\tau,\label{eq:ztx}
\end{align}
for some piecewise differentiable $2$-periodic functions $E^0,E^1,F^0,F^1,G^0,G^1$. 

We derive at once the following  expressions for the partial derivative of $z$ namely,
for a.e. on $\mathbb{R}_+\times \mathbb{R}$
\begin{align}
z_t(t,x)&=\frac12[z_0'(x+t)+z_0'(x-t)]+\frac12[z_1(x+t)-z_1(x-t)]\nonumber\\
&-\frac12\int_0^t[\widetilde{q}(x+t-\tau)z_t(t,x+t-\tau)-\widetilde{q}(x-t+\tau)z_t(t,x-t+\tau)]\, d\tau\nonumber\\
& +E^0_t(x,t)z_0(0)+E^1_t(x,t)z_1(0)+F^0_t(x,t)h_0(0)+F^1_t(x,t)h_1(0)\nonumber\\
&+G^0(x,t,t)h_0(t)+G^1(x,t,t)h_1(t)\nonumber\\
&+\int_0^t[G^0_t(x,t,\tau)h_0(\tau)+G^1_t(x,t,\tau)h_1(\tau)]\, d\tau,\label{eq:ztx-t}
\end{align}
and 
\begin{align}
z_x(t,x)&=\frac12[z_0(x+t)+z_0(x-t)]+\frac12[z_1(x+t)+z_1(x-t)]\nonumber\\
&-\frac12\int_0^t[\widetilde{q}(x+t-\tau)z_t(t,x+t-\tau)-\widetilde{q}(x-t+\tau)z_t(t,x-t+\tau)]\, d\tau\nonumber\\
& +E^0_x(x,t)z_0(0)+E^1_x(x,t)z_1(0)+F^0_x(x,t)h_0(0)+F^1_x(x,t)h_1(0)\nonumber\\
&+\int_0^t[G^0_x(x,t,\tau)h_0(\tau)+G^1_x(x,t,\tau)h_1(\tau)]\, d\tau.
\label{eq:ztx-x}
\end{align}
By a fixed point argument as in \cite{ChitourMarxPrieur2020} applied to \eqref{eq:ztx-t} (where $z_t$ is the function whose existence has to be shown), we get existence and uniqueness of weak and strong solutions for \eqref{sys_main_int_cont} as provided next.
\begin{proposition}\label{prop:inhomo-1}
Let $p\in [1,\infty]$. For every $(z_0,z_1)$ in $W^{1,p}([0,1],\mathbb{C})\times L^p([0,1],\mathbb{C})$ ($W^{2,p}([0,1],\mathbb{C})\times W^{1,p}([0,1],\mathbb{C})$ respectively) and $(h_0,h_1)$ in $W^{1,p}_{loc}(\mathbb{R}_+,\mathbb{C})^2$
($W^{2,p}_{loc}(\mathbb{R}_+,\mathbb{C})^2$ respectively),
there exists a unique weak (strong respectively) solution $(z,z_t)$ of \eqref{sys_main_int_cont} which belongs to  $C([0, + \infty); W^{1, p}(0, 1)) \cap C^1([0, + \infty); L^p(0, 1))$ 
($C([0, + \infty); W^{2, p}(0, 1)) \cap C^1([0, + \infty); W^{1, p}(0, 1))$ respectively). 
\end{proposition}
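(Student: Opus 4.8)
The representation \eqref{eq:ztx-t} expresses $z_t$ implicitly in terms of itself, so the plan is to set up a Banach fixed point argument with $z_t$ as the unknown, exactly along the lines of \cite{ChitourMarxPrieur2020}. For $T>0$ I would work in the Banach space $Y_T := C([0,T];L^p(0,1))$ (with the usual convention $L^\infty$ when $p=\infty$) and define the map $\Phi$ sending $g\in Y_T$ to
\[
\Phi(g)(t,x) = \Psi(t,x) - \frac12\int_0^t\big[\widetilde q(x+t-\tau)\,g(\tau,x+t-\tau)-\widetilde q(x-t+\tau)\,g(\tau,x-t+\tau)\big]\,d\tau,
\]
where $\Psi$ collects every term of \eqref{eq:ztx-t} depending only on the data $(z_0,z_1,h_0,h_1)$ and on the piecewise-differentiable $2$-periodic kernels $E^i,F^i,G^i$; a fixed point of $\Phi$ is precisely $z_t$. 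The first routine check is that $\Psi\in Y_T$, which follows from $(z_0,z_1)\in W^{1,p}\times L^p$, $(h_0,h_1)\in W^{1,p}_{loc}$ and the continuity of translations in $L^p$.

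The heart of the matter is the contraction estimate. The reflection--extension construction replaces $g$ by its $2$-periodic odd extension, so every spatial shift appearing in $\Phi$ is an isometry of $L^p$ up to a fixed factor, namely $\|g(\tau,\cdot+c)\|_{L^p(0,1)}\le 2^{1/p}\|g(\tau,\cdot)\|_{L^p(0,1)}$ for all $c$, uniformly in $p\in[1,\infty]$. Together with $\widetilde q\in L^\infty$ (guaranteed by $q\in L^\infty$ in (A2)), this gives
\[
\big\|\Phi(g_1)-\Phi(g_2)\big\|_{Y_T}\le C\,T\,\|\widetilde q\|_{L^\infty}\,\|g_1-g_2\|_{Y_T}
\]
for a universal constant $C$. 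Hence $\Phi$ is a contraction as soon as $T<(C\|\widetilde q\|_{L^\infty})^{-1}$, and Banach's theorem yields a unique $z_t\in Y_T$. Since this bound on $T$ is independent of the initial time and of the data, I would iterate on the successive intervals $[kT,(k+1)T]$, $k\in\mathbb{N}$, to produce a unique global $z_t\in C([0,\infty);L^p(0,1))$.

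With $z_t$ in hand, $z$ is recovered from \eqref{eq:ztx} and $z_x$ from \eqref{eq:ztx-x}; substituting the fixed point into these identities shows $z_x\in C([0,\infty);L^p(0,1))$ and, using the $W^{1,p}$ regularity of the data, $z\in C([0,\infty);W^{1,p}(0,1))\cap C^1([0,\infty);L^p(0,1))$, the asserted weak-solution regularity. For strong solutions I would bootstrap: assuming $(z_0,z_1)\in W^{2,p}\times W^{1,p}$ and $(h_0,h_1)\in W^{2,p}_{loc}$, one differentiates \eqref{eq:ztx-t} formally in $x$ and $t$ and reruns the identical contraction for $z_{tx}$ and $z_{tt}$, the extra regularity of the data absorbing the derivatives landing on $\Psi$; this upgrades the solution to $C([0,\infty);W^{2,p})\cap C^1([0,\infty);W^{1,p})$.

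The main obstacle is to carry the whole scheme uniformly across the full range $p\in[1,\infty]$, in particular at the endpoints $p=1$ and $p=\infty$ where Hilbert-space machinery is unavailable; this works only because both the translations and multiplication by the bounded function $\widetilde q$ act boundedly on every $L^p$ with a $p$-independent contraction threshold. The most delicate secondary point is the time-continuity of $t\mapsto z_t(t,\cdot)$ into $L^p$: it is automatic from continuity of translations when $1\le p<\infty$, but at $p=\infty$ it is genuinely borderline and must be handled with care, leaning on the continuity in $x$ afforded by the $W^{1,\infty}$ structure of $z$ rather than on the raw $L^\infty$ data.
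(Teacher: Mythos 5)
Your proposal follows exactly the paper's route: the paper proves Proposition~\ref{prop:inhomo-1} in one line, by invoking ``a fixed point argument as in \cite{ChitourMarxPrieur2020} applied to \eqref{eq:ztx-t} (where $z_t$ is the function whose existence has to be shown)'', which is precisely the contraction scheme you spell out --- the map built from \eqref{eq:ztx-t}, the small-time contraction constant $C\,T\,\|\widetilde q\|_{L^\infty}$ with the $2^{1/p}$ factor from the periodic odd extension, concatenation in time, recovery of $z$ and $z_x$ from \eqref{eq:ztx} and \eqref{eq:ztx-x}, and differentiation of the scheme for strong solutions. One caveat at the endpoint $p=\infty$: your assertion that $\Psi\in C([0,T];L^p)$ ``follows from the continuity of translations in $L^p$'' fails there (translations are not continuous in $L^\infty$, so for data merely in $W^{1,\infty}\times L^\infty$ one should run the contraction in $L^\infty((0,T)\times(0,1))$ rather than in $C([0,T];L^\infty)$) --- a genuinely borderline point that you correctly flag, and that the paper's one-line proof does not address either.
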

%For weak and strong solutions of \eqref{sys_main_int_cont}, set 
%\begin{equation}\label{eq:Z}
%(z)(t)=\begin{pmatrix}z_x(t,1)\\ z_x(t,0)\end{pmatrix},\quad t\geq 0.
%\end{equation}
We need the following proposition for subsequent analysis, which is an immediate 
consequence of \eqref{eq:ztx-x}
\begin{lemma}\label{lem:trace}
Let $p\in [1,\infty]$. 
For every $(z_0,z_1)$ in $W^{1,p}([0,1],\mathbb{C})\times L^p([0,1],\mathbb{C})$, $(c_0,c_1)\in\mathbb{C}^2$, consider the application ${\mathcal{T}}$ which associates to every $(h_0,h_1)$ in $W^{1,p}_{loc}(\mathbb{R}_+,\mathbb{C})^2$ so that $h_0(0)=c_0$ and $h_1(0)=c_1$ the pair of functions $(z_x(\cdot,1), z_x(\cdot,0))$. Then ${\mathcal{T}}$ is a linear bounded operator taking values in $L^p_{loc}(\mathbb{R}_+,\mathbb{C})$ and there exists $C_p>0$ (independent of $z_0,z_1,c_0,c_1$) such that, for every $t\in (0,\frac12)$, one has 
\begin{equation}\label{eq:lip}
\Vert {\mathcal{T}}(h_0,h_1)-{\mathcal{T}}(\widetilde{h}_0,\widetilde{h}_1)\Vert_{L^p(0,t)}\leq C_pt(\Vert h_0-\widetilde{h}_0\Vert_{L^p(0,t)}+\Vert h_1-\widetilde{h}_1\Vert_{L^p(0,t)}).
\end{equation}
\end{lemma}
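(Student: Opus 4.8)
The plan is to read off the trace estimate \eqref{eq:lip} directly from the explicit representation \eqref{eq:ztx-x} for $z_x$, specialized to the boundary points $x=1$ and $x=0$. First I would fix $(z_0,z_1)$ and the initial data $(c_0,c_1)$ and observe that, when we form the difference ${\mathcal{T}}(h_0,h_1)-{\mathcal{T}}(\widetilde h_0,\widetilde h_1)$, every term in \eqref{eq:ztx-x} that depends only on the initial data $z_0,z_1,z_0(0),z_1(0)$ and on the values $h_0(0)=\widetilde h_0(0)=c_0$, $h_1(0)=\widetilde h_1(0)=c_1$ cancels exactly. This is the key simplification: the operator ${\mathcal{T}}$ is affine, but its affine part is killed by the difference, so only the genuinely $h$-dependent pieces survive, namely the convolution integral $\int_0^t[G^0_x(x,t,\tau)h_0(\tau)+G^1_x(x,t,\tau)h_1(\tau)]\,d\tau$ together with the contribution of the memory term $-\tfrac12\int_0^t[\widetilde q(\cdots)z_t(\cdots)-\widetilde q(\cdots)z_t(\cdots)]\,d\tau$, where the dependence on $h$ enters through $z_t$.

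Next I would estimate these surviving terms in $L^p(0,t)$ for $t\in(0,\tfrac12)$. The functions $G^0_x,G^1_x$ are piecewise differentiable and $2$-periodic, hence uniformly bounded on the relevant range; so the convolution term contributes a bound of the form $\int_0^t |h_0-\widetilde h_0|+|h_1-\widetilde h_1|$, and after taking $L^p$-norms in the outer variable over an interval of length $t<\tfrac12$ and invoking Jensen/Hölder, one picks up exactly one factor of $t$ in front of $\|h_0-\widetilde h_0\|_{L^p(0,t)}+\|h_1-\widetilde h_1\|_{L^p(0,t)}$, which is the shape of \eqref{eq:lip}. The restriction $t<\tfrac12$ is what keeps the characteristics $x\pm(t-\tau)$ within one period and prevents the reflected boundary data from re-entering, so that the kernels stay bounded and the representation is the clean one written above.

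The step I expect to be the genuine obstacle is controlling the memory term, i.e. the dependence of $z_t$ itself on $(h_0,h_1)$. Since $z_t$ solves the fixed-point equation \eqref{eq:ztx-t}, the difference $z_t-\widetilde z_t$ is not simply bounded by $\|h-\widetilde h\|$; one must first establish the Lipschitz dependence of the map $(h_0,h_1)\mapsto z_t$ in $C([0,t];L^p)$, with a Lipschitz constant that is $O(t)$ on short time intervals. This is precisely the content of the fixed-point argument of Proposition \ref{prop:inhomo-1} (the contraction in \eqref{eq:ztx-t} on $(0,\tfrac12)$), so I would invoke that well-posedness together with its quantitative short-time contraction estimate to bound $\|z_t-\widetilde z_t\|_{C([0,t];L^p)}$ by $C t(\|h_0-\widetilde h_0\|_{L^p(0,t)}+\|h_1-\widetilde h_1\|_{L^p(0,t)})$. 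Feeding that back into the boundary representation \eqref{eq:ztx-x} at $x=0,1$ and absorbing the $\widetilde q$ factor (bounded by $(A2)$) yields the claimed estimate.

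Finally I would assemble the pieces: linearity and boundedness of ${\mathcal{T}}$ into $L^p_{loc}$ follow from \eqref{eq:ztx-x} and Proposition \ref{prop:inhomo-1}, and the three surviving contributions (the $G_x$-convolution and the two halves of the memory integral) each carry the factor $t$, so their sum gives \eqref{eq:lip} with a constant $C_p$ depending only on $p$, on $\|\widetilde q\|_{\infty}$, and on uniform bounds for the kernels $G^j_x$ — in particular independent of $z_0,z_1,c_0,c_1$, as required. Because the affine part cancels in the difference, the independence of $C_p$ from the data is automatic, which is exactly why the statement is phrased as a Lipschitz (rather than merely bounded-affine) estimate.
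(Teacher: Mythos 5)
Your overall strategy is exactly the one the paper intends: Lemma~\ref{lem:trace} is stated there as ``an immediate consequence of \eqref{eq:ztx-x}'' with no further detail, and your reading is the right one --- in the difference ${\mathcal{T}}(h_0,h_1)-{\mathcal{T}}(\widetilde h_0,\widetilde h_1)$ all terms depending only on $z_0,z_1,c_0,c_1$ cancel, the $G^j_x$-convolution is handled by H\"older (a factor $t^{1-1/p}$ from the inner $\tau$-integral, a factor $t^{1/p}$ from the outer $L^p(0,t)$-norm), and the only real work is the memory term, whose dependence on $(h_0,h_1)$ goes through $z_t$.

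However, your treatment of that last step contains a genuine error. You claim that the fixed-point argument gives $\Vert z_t-\widetilde z_t\Vert_{C([0,t];L^p)}\leq Ct\,(\Vert h_0-\widetilde h_0\Vert_{L^p(0,t)}+\Vert h_1-\widetilde h_1\Vert_{L^p(0,t)})$, i.e.\ a Lipschitz constant that is $O(t)$. This is false, for two reasons. First, the representation \eqref{eq:ztx-t} contains the \emph{instantaneous} terms $G^0(x,t,t)h_0(t)+G^1(x,t,t)h_1(t)$, which are not integrated in time: at each time $s$ they contribute to $z_t-\widetilde z_t$ a quantity of size $|h_j(s)-\widetilde h_j(s)|$, so the solution map $(h_0,h_1)\mapsto z_t$ is Lipschitz with constant $O(1)$, not $O(t)$, no matter how small the time interval. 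Second, the norms in your claim are mismatched: for finite $p$, a bound on $\sup_{s\leq t}\Vert z_t(s,\cdot)\Vert_{L^p_x}$ in terms of merely the $L^p$-in-time norm of $h_j$ cannot hold, again because of those pointwise-in-time boundary terms. The correct (and sufficient) input is the $O(1)$ estimate in time-integrated norms, e.g.\ $\sup_{x}\Vert (z_t-\widetilde z_t)(\cdot,x)\Vert_{L^p(0,t)}\leq C\big(\Vert h_0-\widetilde h_0\Vert_{L^p(0,t)}+\Vert h_1-\widetilde h_1\Vert_{L^p(0,t)}\big)$, which does follow from the contraction/Gronwall argument behind Proposition~\ref{prop:inhomo-1}. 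This weaker bound still proves \eqref{eq:lip}: in \eqref{eq:ztx-x} the memory term sits under $\int_0^t\cdots\,d\tau$, so H\"older in $\tau$ along the characteristics $x\pm(t-\tau)$ and the outer $L^p(0,t)$-norm together supply the full factor $t$, exactly as in your estimate of the $G^j_x$-convolution. In other words, the factor $t$ in \eqref{eq:lip} comes entirely from the $\tau$-integrations in the trace formula, not from any smallness of the solution map itself; with your $O(t)$ claim replaced by the $O(1)$ bound above, the argument closes and coincides with the paper's intended proof.
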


For the second step towards the concept of solution of \eqref{sys_main_int} in an $L^p$ framework, one considers the three-dimensional linear control system
\begin{equation}\label{eq:finitedim}
\dot y=Ay+Bv,\quad y\in\mathbb{C}^3,\quad v\in \mathbb{C}^2,
\end{equation}
with \[
A=\begin{pmatrix}0&1&0\\-\alpha_1&-\alpha_2&0\\0&0&-\gamma_1\end{pmatrix},
\quad  B=\begin{pmatrix}0&0\\-\beta_1&0\\0&\mu_1\end{pmatrix},
\]
 and the (linear) output  map $F$ as 
 \begin{equation}\label{eq:F}
 F(v)=(y_2,y_3).
 \end{equation}
 
Hence $\eta_1,\eta_2,\zeta_1$ defined in \eqref{eq:xi1}, \eqref{eq:eta} and \eqref{eq:xi0} can be seen as solutions of \eqref{eq:finitedim} with control input $v=(u_x(t,1),u_x(t,0))$ and can be written for $t\geq 0$
\begin{equation}\label{eq:int-finite}
\begin{pmatrix}\eta_2(t)\\ \eta_1(t)\\ \zeta_1(t)\end{pmatrix}=e^{At}\begin{pmatrix}\eta_2(0)\\ \eta_1(0)\\ \zeta_1(0)\end{pmatrix}+\int_0^t e^{A(t-\tau)}B
\begin{pmatrix}u_x(\tau,1)\\ u_x(\tau,0)\end{pmatrix}\, d\tau.
\end{equation}
It is now immediate to see that \eqref{sys_main_int} is the interconnection of the two control systems \eqref{sys_main_int_cont} and \eqref{eq:finitedim} where the control inputs
 $(h_0,h_1)$  and $v$ are equal to $(\eta_1,\zeta_1)$ and $(u_x(\cdot,1), u_x(\cdot,0))$ respectively.

 We can now define the notion of solution of  \eqref{sys_main_int} in an $L^p$ framework: a $u$ is a solution of  \eqref{sys_main_int_cont} with initial condition $(z_0,z_1)$ and control functions $(h_0,h_1)$ if and only if $u$ is a solution of  \eqref{sys_main_int} and 
 $(h_0,h_1)=F((u_x(\cdot,1), u_x(\cdot,0)))$. By choosing $p\in [1,\infty]$ and the appropriate functional space, we have a well-defined concept of weak and strong solutions.
 
 \medskip 
 We can now state the existence result for the solution of  \eqref{sys_main_int} in an $L^p$ framework.
 \begin{proposition}\label{prop:existence-p}
 Let $p\in [1,\infty]$. For every $(z_0,z_1)$ in $W^{1,p}([0,1],\mathbb{C})\times L^p([0,1],\mathbb{C})$ ($W^{2,p}([0,1],\mathbb{C})\times W^{1,p}([0,1],\mathbb{C})$ respectively) and $(c_0,c_1,c_2)$ in $\mathbb{C})^3$,
there exists a unique weak (strong respectively) solution $(z,z_t)$ of \eqref{sys_main_int_cont} which belongs to  $C([0, + \infty); W^{1, p}(0, 1)) \cap C^1([0, + \infty); L^p(0, 1))$ \\
($C([0, + \infty); W^{2, p}(0, 1)) \cap C^1([0, + \infty); W^{1, p}(0, 1))$ respectively). 
 \end{proposition}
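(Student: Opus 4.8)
The plan is to realize \eqref{sys_main_int} as the feedback interconnection of the two control systems already treated: the inhomogeneous wave system \eqref{sys_main_int_cont}, whose boundary inputs are $(h_0,h_1)$ and whose relevant outputs are the traces $(z_x(\cdot,1),z_x(\cdot,0))$, and the finite-dimensional system \eqref{eq:finitedim}, which reads those traces as its control $v=(u_x(\cdot,1),u_x(\cdot,0))$ and returns $(\eta_1,\zeta_1)$ through the output map $F$ of \eqref{eq:F}. Producing a solution of \eqref{sys_main_int} is then equivalent to solving the fixed-point equation $(h_0,h_1)=(\zeta_1,\eta_1)$, where $(\eta_1,\zeta_1)$ is obtained from $(h_0,h_1)$ by first solving the PDE (\Cref{prop:inhomo-1}) and then feeding its traces into the variation-of-constants formula \eqref{eq:int-finite}. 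Since $(z_0,z_1)$ and $(c_0,c_1,c_2)$ are prescribed, I would work on the closed affine subset of $W^{1,p}_{loc}(\mathbb{R}_+,\mathbb{C})^2$ consisting of pairs with $h_0(0)=c_0$ and $h_1(0)=c_1$, which is precisely the class on which \Cref{lem:trace} applies.

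Next I would define the solution map $\Phi(h_0,h_1)=(\zeta_1,\eta_1)$ on a short interval $[0,t]$ and show it is a contraction for $t$ small. The PDE block is controlled by the Lipschitz estimate \eqref{eq:lip}, which gains a factor $t$; the finite-dimensional block contributes the affine map $v\mapsto \int_0^{\cdot} e^{A(\cdot-\tau)}Bv(\tau)\,d\tau$, whose Lipschitz constant on $[0,t]$ is bounded by $t\,\sup_{0\le s\le t}\|e^{As}B\|$ and is therefore also $O(t)$. Composing the two blocks gives $\|\Phi(h_0,h_1)-\Phi(\widetilde h_0,\widetilde h_1)\|_{L^p(0,t)}\le C_p\, t\,\|(h_0,h_1)-(\widetilde h_0,\widetilde h_1)\|_{L^p(0,t)}$, so for $t<t_0$, with $t_0<1/2$ chosen small enough that $C_p t_0<1$ (respecting the range in \Cref{lem:trace}), the Banach fixed-point theorem yields a unique $(h_0,h_1)$ and hence a unique local solution on $[0,t_0]$. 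Inserting this fixed point back into \Cref{prop:inhomo-1} and \eqref{eq:int-finite} delivers $(z,z_t,\eta_1,\eta_2,\zeta_1)$ in the asserted regularity class, the strong case following from the strong-solution part of \Cref{prop:inhomo-1} together with the fact that the linear ODE \eqref{eq:finitedim} preserves the extra derivative.

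Finally, I would pass from local to global existence by iteration. The key point is that the contraction length $t_0$ depends only on $p$ and on $\sup_{0\le s\le t_0}\|e^{As}B\|$, not on the size of the data, so restarting the construction at $t=t_0$ with $\big(z(t_0,\cdot),z_t(t_0,\cdot)\big)$ and $\big(\eta_1(t_0),\eta_2(t_0),\zeta_1(t_0)\big)$ as new initial conditions extends the solution by another step of the same length, and repeating covers all of $\mathbb{R}_+$; uniqueness on each step propagates to uniqueness on $[0,\infty)$ in the standard way. I expect the only genuinely delicate point to be the contraction estimate itself, namely verifying that after the feedback loop is closed the boundary traces still inherit the small-time gain \eqref{eq:lip}; once that is secured, the continuation and the verification of the function-space memberships are routine.
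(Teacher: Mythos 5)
Your proposal is correct and is essentially the paper's own proof: the paper solves the conjugate fixed-point equation $w=\mathcal{T}(F(w))$ for the traces $w=(u_x(\cdot,1),u_x(\cdot,0))$ in $L^p([0,T],\mathbb{C}^2)$ (in $W^{1,p}$ for strong solutions), using the small-time Lipschitz gain \eqref{eq:lip} of \Cref{lem:trace} to make $\mathcal{T}\circ F$ a $\tfrac12$-contraction on a data-independent interval $[0,t_*]$ and then concatenating, so iterating instead on $(h_0,h_1)=F(\mathcal{T}(h_0,h_1))$ as you do is the same argument with the composition taken in the opposite order. One small repair: the affine subset of $W^{1,p}_{loc}$ pairs with $h_0(0)=c_0$, $h_1(0)=c_1$ is not complete under the $L^p(0,t)$ metric, so you should pose the fixed-point equation in the complete space $L^p(0,t)^2$, treating $c_0,c_1$ as fixed parameters of $\mathcal{T}$ (which \eqref{eq:ztx-x} permits, since $z_x$ depends on $(h_0,h_1)$ only through their $L^p$ values and the constants $h_0(0),h_1(0)$), and then recover the $W^{1,p}$ regularity and initial values of the fixed point a posteriori from the representation $(h_0,h_1)=F(\mathcal{T}(h_0,h_1))$.
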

 \begin{proof}
 With the notations of the proposition, the argument is based on the fact that the definition of solution of  \eqref{sys_main_int} in an $L^p$ framework is equivalent to the existence of a solution of the functional equation $w={\mathcal{T}}(F(w))$, in $L^p([0,T],\mathbb{C}^2)$ for weak solutions and 
 $W^{1,p}([0,T],\mathbb{C}^2)$ for strong solutions, for every $T>0$. 
Here $w$ stands for $(u_x(\cdot,1), u_x(\cdot,0)))$. Thanks to Lemma~\ref{lem:trace}, the affine mapping ${\mathcal{T}}\circ F$ is $\frac12$-Lipschitz for some $t_*>0$ small enough but independent of the data of the problem.  We thus get such solution on $[0,t_*]$. By concatenating such solutions, one gets a global solution on $[0,T]$ for any $T>0$.
  \end{proof}
 
\subsection{Well-posedness of \eqref{sys_main_int} in an $L^p$ framework for $p\in [1,\infty]$ using the Riemann invariants} \label{sect-WP}

In this section, we give another proof on the well-posedness of the solutions in the $L^p$-setting. The analysis is based on the Riemann invariants of the system. Set, for $(t, x) \in [0, + \infty) \times (0, 1)$,  
$$
\rho (t, x) = u_t(t, x) + u_x(t, x) \quad \mbox{ and } \quad \xi (t, x) =  u_t(t, x) - u_x(t, x). 
$$
System \eqref{sys_main_int} becomes (recall that $a \equiv 1$)
\be\label{sys-R-p1}
\left\{ \begin{array}{l}
\xi_t - \xi_x = - 2q (\rho + \xi)  \quad \mbox{ for } (t,x)\in \mathbb{R}_+ \times (0, 1), \\[6pt]
\rho_t + \rho_x = - 2q (\rho + \xi)\quad \mbox{ for } (t,x) \in \mathbb{R}_+ \times (0, 1), \\[6pt]
(\rho + \xi)(t,1)= 2\eta_1(t) \quad \mbox{ for } t\geq 0,\\[6pt]
(\rho + \xi)(t,0)= 2\zeta_1(t) \quad\mbox{ for }  t\geq 0, 
\end{array} \right.
\ee
coupled with an ordinary differential system
\be\label{sys-R-p2}
\left\{ \begin{array}{l}
\dot\eta_1(t)=-\alpha_1\eta_1(t)-\alpha_2\eta_2(t)- \frac{1}{2} \beta_1 (\rho - \xi)(t,1) \quad \mbox{ for } t\geq 0,\\[6pt]
\dot\eta_2(t)=\eta_1(t) \quad \mbox{ for } t\geq 0,\\[6pt]
\dot\zeta_1(t)=-\gamma_1\zeta_1(t)+ \frac{1}{2}\mu_1 (\rho - \xi) (t,0) \quad \mbox{ for } t\geq 0, 
\end{array} \right.
\ee 
with the initial data 
\be \label{sys-R-initial}
\rho(0, \cdot) = \rho_0, \quad \xi (0, \cdot) =\xi_0 \quad \mbox{ in } (0, 1), \quad  \eta_1(0)=\eta_{1,0},\quad \eta_2(0)=\eta_{2,0}, \quad \zeta_1(0)=\zeta_{1,0},  
\ee
where 
$$
\rho_0 = u_1 + u_{0, x} \quad \mbox{ and } \quad \xi_{0} = u_1 - u_{0, x}. 
$$

We will establish the well-posedness of \eqref{sys-R-p1} and \eqref{sys-R-p2}, which will imply the well-posedness in the $L^p$-setting of \eqref{sys_main_int}. The well-posedness of \eqref{sys-R-p1} when $\eta_1$ and $\zeta_1$ are given in $L^p$ is quite standard. 

\begin{proposition}\label{pro-R1} Let $T > 0$, $q \in L^\infty((0, T) \times (0, 1))$, and let $\rho_0, \xi_0 \in L^p(0, 1)$ and $f, g \in L^p(0, T)$. There exists a unique broad solution $(\rho, \xi) \in \Big(C([0, T]; L^p(0, 1)) \cap C([0, 1]; L^p(0, T)) \Big)^2$ of the system  
\be\label{sys-R-p1-1}
\left\{ \begin{array}{l}
%\begin{subnumcases}{\label{sys_main_int-R}}
\xi_t - \xi_x = - q (\rho + \xi)  \quad \mbox{ for } (t,x)\in (0, T) \times (0, 1), \\[6pt]
\rho_t + \rho_x = q (\rho + \xi)\quad \mbox{ for } (t,x)\in (0, T) \times (0, 1), \\[6pt]
(\rho + \xi)(t,1)= f(t) \quad \mbox{ for } t \in (0, T),\\[6pt]
(\rho + \xi)(t,0)= g(t) \quad\mbox{ for }  t \in (0, T),\\[6pt]
\rho(0, \cdot) = \rho_0, \quad \xi(0, \cdot) = \xi_0 \quad \mbox{ in } (0, 1).  
\end{array} \right.
\ee
Moreover, 
$$
\| (\rho, \xi) \|_{C([0, T]; L^p(0, 1)) \cap C([0, 1]; L^p(0, T))} \le C \left(\| (\rho_0, \xi_0)\|_{L^p(0, 1)} +  \|(f, g) \|_{L^p(0, T)} \right), 
$$
for some positive constant $C$ depending only on $T$ and $\| q\|_{L^\infty((0, T) \times (0, 1))}$. 
\end{proposition}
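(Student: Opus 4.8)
The plan is to construct the \emph{broad} solution of \eqref{sys-R-p1-1} by the method of characteristics and then close the construction with a contraction argument in the Banach space
$$
Y_T := \Big( C([0, T]; L^p(0, 1)) \cap C([0, 1]; L^p(0, T)) \Big)^2.
$$
Since $a \equiv 1$, the invariant $\rho$ propagates rightward along the lines $s \mapsto x + (s-t)$ and $\xi$ leftward along $s \mapsto x - (s-t)$. Integrating the two transport equations along these characteristics yields the Duhamel representation
\begin{align*}
\rho(t, x) &= \rho_\sharp(t, x) + \int_{s_\rho(t,x)}^t q\,(\rho + \xi)\big(s, x - (t - s)\big) \, ds, \\
\xi(t, x) &= \xi_\flat(t, x) - \int_{s_\xi(t,x)}^t q\,(\rho + \xi)\big(s, x + (t - s)\big) \, ds,
\end{align*}
where the free terms $\rho_\sharp, \xi_\flat$ and the starting times $s_\rho, s_\xi$ are read off from the data: the right-going characteristic through $(t, x)$ originates from the initial line (contributing $\rho_0(x - t)$ when $x \ge t$) or from $x = 0$ (contributing $\rho(t-x, 0) = g(t - x) - \xi(t - x, 0)$ when $x < t$), and symmetrically the left-going characteristic originates from $\xi_0(x + t)$ when $x + t \le 1$ or from $x = 1$, contributing $f(t - (1-x)) - \rho(t - (1 - x), 1)$. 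Thus the boundary conditions close the system transversally: at $x = 1$ the incoming invariant $\xi$ receives the datum $f - \rho(\cdot, 1)$, and at $x = 0$ the incoming invariant $\rho$ receives $g - \xi(\cdot, 0)$.

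First I would set up the solution map $\Phi : Y_T \to Y_T$ given by the right-hand sides above, with the space-traces of the \emph{input} pair used inside $\rho_\sharp$ and $\xi_\flat$. The key point is that $\Phi$ maps $Y_T$ into itself with simultaneous control of both scales: the traces $\rho(\cdot, 1)$ and $\xi(\cdot, 0)$ feeding the boundary data are precisely the $C([0,1]; L^p(0,T))$ components, which is why the time-trace norm is built into $Y_T$. Using that translation is continuous in $L^p$ and that $q \in L^\infty$ acts as a bounded multiplier, each Duhamel integral defines a function continuous in $t$ with values in $L^p(0,1)$ and continuous in $x$ with values in $L^p(0,T)$; moreover the change of variables along characteristics is measure-preserving, so the $L^p$ norms of the free terms are bounded by $\| (\rho_0, \xi_0) \|_{L^p(0,1)} + \| (f, g) \|_{L^p(0, T)}$.

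For the contraction I would estimate $\Phi(\rho_1, \xi_1) - \Phi(\rho_2, \xi_2)$ in $Y_\tau$ on a short interval $[0, \tau]$: the free terms cancel, and the difference is governed by $L^p$ norms of $q(\rho_1 - \rho_2 + \xi_1 - \xi_2)$ together with the boundary couplings $\rho_i(\cdot,1)$, $\xi_i(\cdot,0)$. Since $\| q \|_\infty < \infty$ and each characteristic integral carries a factor of the integration length, choosing $\tau$ small (depending only on $\| q\|_\infty$) makes $\Phi$ a contraction on $Y_\tau$, giving the unique broad solution on $[0,\tau]$. As $\tau$ is independent of the data, concatenating over $[0,T]$ produces the global solution, and a Gronwall argument applied to $t \mapsto \| (\rho, \xi)(t, \cdot) \|_{L^p(0,1)}^p + \|\rho(\cdot,1)\|_{L^p(0,t)}^p + \|\xi(\cdot,0)\|_{L^p(0,t)}^p$ yields the stated bound with constant $C = C(T, \| q\|_\infty)$.

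The main obstacle is the simultaneous control of the two continuity scales under the boundary coupling: because the boundary datum of each Riemann invariant is the interior time-trace of the \emph{other} invariant, the estimate cannot be closed in $C([0,T]; L^p(0,1))$ alone, and one is forced to work in the joint norm $Y_T$ and to check that the hidden-regularity traces $\rho(\cdot,1)$, $\xi(\cdot,0)$ depend Lipschitz-continuously, with small constant on short intervals, on the data. Tracking the piecewise structure of $\rho_\sharp, \xi_\flat$ across the separating characteristics $x = t$ and $x + t = 1$, and confirming that no compatibility condition is needed at the corners since the $L^p$ topology is insensitive to the resulting jumps, is the remaining technical point.
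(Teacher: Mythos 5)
Your overall architecture coincides with the paper's: the paper also constructs the broad solution by a fixed point argument built on the characteristic (Duhamel) representation, in the same two-scale space, the only difference being that instead of your short-time contraction plus concatenation it runs a single global fixed point on $[0,T]$ for the exponentially weighted norm $\max\big\{\sup_{t\in[0,T]}e^{-Lt}\|(\rho,\xi)(t,\cdot)\|_{L^p(0,1)},\ \sup_{x\in[0,1]}\|e^{-L\cdot}(\rho,\xi)(\cdot,x)\|_{L^p(0,T)}\big\}$ with $L$ large depending on $\|q\|_\infty$, and defers the details to the cited references of Coron and Nguyen.

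There is, however, a genuine flaw in your contraction step. As you define $\Phi$ — with the time-traces of the \emph{input} pair feeding the free terms — the map contains the boundary reflections $\rho(t,0)=g(t)-\xi(t,0)$ and $\xi(t,1)=f(t)-\rho(t,1)$, whose coefficients have modulus $1$; shrinking $\tau$ shrinks only the Duhamel integrals, not these terms. Concretely, take $q\equiv 0$ and two inputs whose difference is $\delta\xi(t,x)=\theta(t)\chi(x)$ with $\theta$ continuous and bounded, $\|\theta\|_{L^p(0,\tau)}=1$, and $\chi$ a continuous cutoff with $\chi(0)=1$ and $\|\chi\|_{L^p(0,1)}$ tiny: the input difference has $Y_\tau$-norm $1$ (carried by the trace component), while the output difference is $-\theta(t-x)\mathbf{1}_{\{x<t\}}$ in the $\rho$-slot, whose $C([0,\tau];L^p(0,1))$-norm is again $\|\theta\|_{L^p(0,\tau)}=1$. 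Hence the Lipschitz constant of $\Phi$ on $Y_\tau$ is at least $1$ for \emph{every} $\tau>0$, and Banach's theorem does not apply to $\Phi$ directly; the same unit coefficient also defeats the naive estimate in the paper's weighted norm at one reflection. The repair is standard and cheap, and is what both your scheme and the paper's implicitly require: for $\tau<1$ a backward characteristic can meet a lateral boundary at most once before reaching $t=0$, so unfold the reflected traces once, substituting for $\xi(t-x,0)$ and $\rho(t-(1-x),1)$ their own characteristic representations in terms of $\rho_0,\xi_0,f,g$ and Duhamel integrals of the input; the resulting map depends on the unknown only through terms carrying the factor $C\|q\|_\infty\tau$. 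Equivalently, $\Phi^2$ is a contraction on $Y_\tau$ (and its unique fixed point $U$ satisfies $\Phi U=U$, since $\Phi U$ is also a fixed point of $\Phi^2$). In the weighted-norm variant the same mechanism appears as the transit delay: a doubly reflected contribution is retarded by at least time $1$ and thus gains the factor $e^{-L}$. With this correction, your concatenation over $[0,T]$ and the Gronwall argument for the a priori bound (which also yields uniqueness of broad solutions directly) go through and give the stated estimate with $C=C(T,\|q\|_{L^\infty})$.
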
  

The meaning of broad solutions of hyperbolic systems in one dimensional space can be found in, e.g.,  \cite[Chapiter 3]{bressan2000hyperbolic} in the whole line and in \cite{coron2019optimal} where the boundary conditions are involved. The proof is based on standard fixed point argument involving the definition of broad solution and using the following norm: 
\be
\vvvert (\rho, \xi) \vvvert: = \max\Big\{ \sup_{t \in [0, T]} e^{-Lt} \| (\rho, \xi)(t, \cdot) \|_{L^p(0, 1)}, \sup_{x \in [0, 1]} \| (\rho, \xi)(\cdot, x) e^{- L \cdot}  \|_{L^p(0, T)} \Big\} 
\ee
for sufficiently large $L$, the largness depends only on $q$. 
See also \cite[Lemma 3.2]{coron2019optimal}  and \cite[Appendix A]{coron2021optimal} for a more general context in $L^\infty$ and $L^2$ settings and the analysis. The details are omitted. 

We next deal with the well-posedness of \eqref{sys-R-p1} and \eqref{sys-R-p2}. We first note that \eqref{sys-R-p2} can be written under the form
\begin{equation}\label{eq:int-finite}
\begin{pmatrix}\eta_2(t)\\ \eta_1(t)\\ \zeta_1(t)\end{pmatrix}=e^{At}\begin{pmatrix}\eta_2(0)\\ \eta_1(0)\\ \zeta_1(0)\end{pmatrix}+\int_0^t e^{A(t-\tau)}B
\begin{pmatrix} (\rho - \xi)(\tau,1)\\ (\rho - \xi)(\tau,0)\end{pmatrix}\, d\tau, 
\end{equation}
where 
 \[
A=\begin{pmatrix}0&1&0\\-\alpha_1&-\alpha_2&0\\0&0&-\gamma_1\end{pmatrix},
\quad  B=\begin{pmatrix}0&0\\- \frac{1}{2}\beta_1&0\\0& \frac{1}{2}\mu_1\end{pmatrix},
\]
We have, for $0 < T \le 1$, 
\be \label{eq:int-finite-1}
\| h \|_{L^p(0, T)} \le C T \Big(\|(\rho, \xi)(\cdot, 1)\|_{L^p}(0, T)\|+ \|(\rho, \xi)(\cdot, 0)\|_{L^p}(0, T)\| \Big), 
\ee
with 
$$
h(t) =\int_0^t e^{A(t-\tau)}B
\begin{pmatrix} (\rho - \xi)(\tau,1)\\ (\rho - \xi)(\tau,0)\end{pmatrix}\, d\tau.
$$

Combining \eqref{eq:int-finite} and \eqref{eq:int-finite-1}, applying \Cref{pro-R1}, and using a fixed point argument, we obtain the following result. 

\begin{proposition}\label{pro-R2} Let $1 \le p \le + \infty$,  $T > 0$, $q \in L^\infty((0, T) \times (0, 1))$, and let $\rho_0, \xi_0 \in L^p(0, 1)$, $\eta_{1, 0}, \eta_{2, 0}, \xi_{1, 0} \in \mathbb{R}$. There exists a unique weak solution $(\rho, \xi, \eta_1, \eta_2, \zeta_1)$  in  $\Big(C([0, T]; L^p(0, 1)) \cap C([0, 1]; L^p(0, T)) \Big)^2 \times C([0, T])^3$ of the system \eqref{sys-R-p1}, \eqref{sys-R-p2}, and \eqref{sys-R-initial}.  
Moreover, 
$$
\| (\rho, \xi) \|_{C([0, T]; L^p(0, 1)) \cap C([0, 1]; L^p(0, T))} + \|(\eta_1, \eta_2, \zeta_1) \|_{C([0, T])} \le C \left(\| (\rho_0, \xi_0)\|_{L^p(0, 1)} +  |(\eta_{1,0}, \eta_{2, 0}, \zeta_{1, 0})| \right), 
$$
for some positive constant $C$ depending only on $T$ and $\| q\|_{L^\infty((0, T) \times (0, 1))}$. 
\end{proposition}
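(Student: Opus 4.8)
The plan is to treat \eqref{sys-R-p1}--\eqref{sys-R-initial} as a feedback interconnection of two subsystems that have already been solved separately, and to close the loop by a short-time contraction argument followed by concatenation. On the one hand, the hyperbolic part \eqref{sys-R-p1} is, for \emph{given} boundary inputs $f = 2\eta_1$ and $g = 2\zeta_1$, exactly the system solved in \Cref{pro-R1}; this produces $(\rho, \xi) \in \big(C([0,T]; L^p(0,1)) \cap C([0,1]; L^p(0,T))\big)^2$ together with the boundary traces $(\rho - \xi)(\cdot, 1)$ and $(\rho - \xi)(\cdot, 0)$ in $L^p(0,T)$. On the other hand, the ODE part \eqref{sys-R-p2} is solved explicitly by the Duhamel representation \eqref{eq:int-finite} once these traces are regarded as inputs. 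I would therefore define, for fixed $T \in (0,1]$, the map
\[
\Phi : C([0,T])^3 \to C([0,T])^3, \qquad (\eta_1, \eta_2, \zeta_1) \mapsto (\tilde\eta_1, \tilde\eta_2, \tilde\zeta_1),
\]
where one first solves \eqref{sys-R-p1} with $f = 2\eta_1$, $g = 2\zeta_1$ via \Cref{pro-R1}, then inserts the resulting traces into \eqref{eq:int-finite}. A fixed point of $\Phi$, together with the associated $(\rho,\xi)$, is precisely a weak solution of \eqref{sys-R-p1}, \eqref{sys-R-p2}, \eqref{sys-R-initial}, and conversely.

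For the contraction estimate I would compare the images of two inputs sharing the same initial data $(\rho_0, \xi_0, \eta_{1,0}, \eta_{2,0}, \zeta_{1,0})$. Since the initial data coincide, the free term $e^{At}(\eta_{2,0}, \eta_{1,0}, \zeta_{1,0})^T$ in \eqref{eq:int-finite} cancels in the difference, so the difference of the outputs equals the Duhamel term evaluated at the difference of the traces, to which \eqref{eq:int-finite-1} applies with its crucial prefactor $T$. Combining this with the linear, bounded dependence of the traces on $(f,g)$ furnished by \Cref{pro-R1}, and noting that the difference of the boundary data is $2(\eta_1 - \eta_1', \zeta_1 - \zeta_1')$, the whole loop gains an overall factor of order $T$. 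Hence there exists $t_* \in (0,1]$, depending only on $\|q\|_{L^\infty}$, on $A$, $B$, and on $p$ (but \emph{not} on the initial data), such that $\Phi$ is a strict contraction on $C([0, t_*])^3$. Banach's fixed point theorem then yields existence and uniqueness on $[0, t_*]$, and the quantitative bound on $[0, t_*]$ follows by chaining the estimate of \Cref{pro-R1} with \eqref{eq:int-finite}.

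Because $t_*$ is independent of the data, I would then globalize by concatenation: the solution provides $(\rho, \xi)(t_*, \cdot) \in L^p(0,1)$ and $(\eta_1, \eta_2, \zeta_1)(t_*) \in \mathbb{C}^3$, which serve as admissible new initial data for the same problem on $[t_*, 2t_*]$, and so on; finitely many steps cover $[0,T]$. The claimed global estimate is obtained by accumulating the per-step bounds, producing a constant that depends on $T$ and $\|q\|_{L^\infty((0,T) \times (0,1))}$, equivalently via a Gr\"onwall argument applied directly to the coupled integral representation.

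The main obstacle is verifying that the feedback loop genuinely contracts, i.e. that the composite gains a small factor uniformly in the data rather than merely being bounded. This rests on two points: first, \Cref{pro-R1} must supply not only $(\rho,\xi)$ but also the boundary traces with the correct $L^p(0,T)$ regularity, so that the inputs to \eqref{eq:int-finite} are well defined; second, the prefactor $T$ in \eqref{eq:int-finite-1} must be used to absorb the (generally not small) constant $C$ of \Cref{pro-R1}, which requires checking that this constant stays bounded as $T \to 0$. Once these are in place, the remainder is the standard interconnection-and-concatenation scheme.
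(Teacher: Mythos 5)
Your proposal is correct and follows essentially the same route as the paper, which proves \Cref{pro-R2} exactly by combining the Duhamel representation \eqref{eq:int-finite} with the estimate \eqref{eq:int-finite-1}, applying \Cref{pro-R1}, and closing the feedback loop by a fixed point argument on a short time interval whose length is independent of the data, followed by concatenation. In fact, your write-up supplies the details (the contraction map $\Phi$, the cancellation of the free term $e^{At}(\eta_{2,0},\eta_{1,0},\zeta_{1,0})^T$ in differences, the uniform-in-$T$ behavior of the constant from \Cref{pro-R1}, and the globalization step) that the paper leaves implicit in its one-sentence proof sketch.
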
   

\begin{remark} \rm The weak solutions given in \Cref{pro-R2} is understood as follows. The pair $(\rho, \xi)$ is a broad solution of \eqref{sys-R-p1} and the triple $(\eta_1, \eta_2, \zeta_1)$ is a solution of \eqref{sys-R-p2} with the corresponding initial data given by \eqref{sys-R-initial}. 
\end{remark}

As a direct consequence of \Cref{pro-R2}, by considering Riemann invariants, we obtain the following result on the well-posedness of \eqref{sys_main_int} in $L^p$-setting for $1 \le p \le + \infty$. 

\begin{proposition} \label{pro-WP} Let $1 \le p \le + \infty$,  $T > 0$, $q \in L^\infty((0, T) \times (0, 1))$, and let $\rho_0, \xi_0 \in L^p(0, 1)$, $\eta_{1, 0}, \eta_{2, 0}, \xi_{1, 0} \in \mathbb{R}$ There exists a unique weak solution of 
$ (u, \eta_1, \eta_2, \zeta_1) \in X_{p} \times \big(C^0[0, \infty) \big)^3$ of \eqref{sys_main_int} such that 
$$
\partial_t u, \partial_x u \in C([0, 1]; L^p(0, T)) \mbox{ for all } T > 0. 
$$ 
Moreover, 
$$
\| (\partial_t u, \partial_x u) \|_{C^0([0, T]; L^p(0, 1))} + \|(\eta_1, \eta_2, \zeta_1) \|_{C([0, T])} \le C_T \left( \|u_0\|_{W^{1, p}(0, 1)} + \| u_1\|_{L^p(0,1)} +  |(\eta_{1,0}, \eta_{2, 0}, \zeta_{1, 0})| \right),  
$$
for some positive constant $C_T$ independent of the intial data. 
\end{proposition}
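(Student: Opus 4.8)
The plan is to build $u$ from the Riemann invariants $(\rho,\xi)$ furnished by \Cref{pro-R2} and to transfer to $u$ all the regularity and the estimate already available for $(\rho,\xi)$. Given initial data with $u_0\in W^{1,p}(0,1)$ and $u_1\in L^p(0,1)$, set $\rho_0=u_1+u_{0,x}$ and $\xi_0=u_1-u_{0,x}$, so that $\rho_0,\xi_0\in L^p(0,1)$ with $\|(\rho_0,\xi_0)\|_{L^p(0,1)}\le C(\|u_0\|_{W^{1,p}(0,1)}+\|u_1\|_{L^p(0,1)})$. Applying \Cref{pro-R2} with these data yields, for every $T>0$, a unique weak solution $(\rho,\xi,\eta_1,\eta_2,\zeta_1)$ of \eqref{sys-R-p1}--\eqref{sys-R-p2}--\eqref{sys-R-initial}, with $(\rho,\xi)\in\big(C([0,T];L^p(0,1))\cap C([0,1];L^p(0,T))\big)^2$, $(\eta_1,\eta_2,\zeta_1)\in C([0,T])^3$, and the accompanying bound in terms of $\|(\rho_0,\xi_0)\|_{L^p(0,1)}$ and $|(\eta_{1,0},\eta_{2,0},\zeta_{1,0})|$. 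It therefore remains to reconstruct $u$ so that $\partial_t u=\tfrac12(\rho+\xi)$ and $\partial_x u=\tfrac12(\rho-\xi)$; the estimate and all regularity claims will then be inherited from those of $(\rho,\xi)$.

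I would define
\[
u(t,x):=u_0(x)+\frac12\int_0^t(\rho+\xi)(\tau,x)\,d\tau ,
\]
which is meaningful since, for each $x$, $(\rho+\xi)(\cdot,x)\in L^p(0,T)\hookrightarrow L^1(0,T)$ and depends continuously on $x$ in $L^p(0,T)$. By construction $u(0,\cdot)=u_0$ and $\partial_t u=\tfrac12(\rho+\xi)$, so $u\in C^1([0,T];L^p(0,1))$ and $\partial_t u\in C([0,1];L^p(0,T))$ are immediate. The crucial identity is $\partial_x u=\tfrac12(\rho-\xi)$. At $t=0$ the two sides agree, since $\tfrac12(\rho_0-\xi_0)=u_{0,x}$. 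For smooth data one differentiates the defining formula in $x$ and uses that the two transport equations in \eqref{sys-R-p1} carry the same right-hand side; this equality of right-hand sides is exactly the integrability condition $\partial_x(\partial_t u)=\partial_t(\partial_x u)$ which guarantees that $\tfrac12(\rho+\xi)$ and $\tfrac12(\rho-\xi)$ are the partial derivatives of one and the same function, and hence yields $\partial_x u=\tfrac12(\rho-\xi)$ for all $t$.

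For general $(\rho,\xi)$ in the broad/weak class this identity must be justified without classical derivatives. I would do so either by approximating $(u_0,u_1)$ by smooth data, running the previous computation, and passing to the limit using the linearity of the reconstruction together with the continuity estimate of \Cref{pro-R2} (this handles $p<\infty$), or, uniformly in $1\le p\le\infty$, directly from the integral representation of broad solutions along the (reflected) characteristics, in which the compatibility relation holds in integrated form. This is the step I expect to be the \emph{main obstacle}: because $\rho$ and $\xi$ have no pointwise derivatives and the characteristics reflect at $x=0$ and $x=1$, the relation $\partial_x u=\tfrac12(\rho-\xi)$ has to be extracted from the characteristic formula rather than from termwise differentiation. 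Once it holds, $\partial_x u=\tfrac12(\rho-\xi)\in C([0,T];L^p(0,1))\cap C([0,1];L^p(0,T))$, so $u\in C([0,T];W^{1,p}(0,1))\cap C^1([0,T];L^p(0,1))=X_p$ on $[0,T]$ with $\partial_t u,\partial_x u\in C([0,1];L^p(0,T))$; letting $T\to\infty$ gives the global solution, and combining the bound of \Cref{pro-R2} with $\|(\rho_0,\xi_0)\|_{L^p(0,1)}\le C(\|u_0\|_{W^{1,p}(0,1)}+\|u_1\|_{L^p(0,1)})$ gives the stated inequality.

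Finally, for uniqueness, if $(u,\eta_1,\eta_2,\zeta_1)$ is any weak solution of \eqref{sys_main_int} in the stated class, then $(\rho,\xi):=(\partial_t u+\partial_x u,\ \partial_t u-\partial_x u)$ together with $(\eta_1,\eta_2,\zeta_1)$ solves \eqref{sys-R-p1}--\eqref{sys-R-p2}--\eqref{sys-R-initial}; the uniqueness part of \Cref{pro-R2} then pins down $(\rho,\xi,\eta_1,\eta_2,\zeta_1)$, and $\partial_t u=\tfrac12(\rho+\xi)$ with $u(0,\cdot)=u_0$ determines $u$ uniquely. This closes the argument.
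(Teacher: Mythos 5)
Your proposal is correct and takes essentially the same route as the paper: the paper obtains \Cref{pro-WP} as a direct consequence of \Cref{pro-R2} by passing through the Riemann invariants, leaving implicit exactly the reconstruction of $u$ (via $\partial_t u=\tfrac12(\rho+\xi)$, $\partial_x u=\tfrac12(\rho-\xi)$) and the uniqueness transfer that you spell out. The step you flag as the main obstacle --- justifying $\partial_x u=\tfrac12(\rho-\xi)$ for broad solutions without classical derivatives, by approximation or from the characteristic representation --- is precisely what the paper treats as routine and omits, so your write-up is a faithful (indeed more detailed) version of the paper's argument.
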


\begin{remark} \rm  In \Cref{pro-WP}, a weak considered  solution  means that $u_{tt} (t, x) - u_{xx}(t, x)  = - q(x) u_t(t, x)$ holds in the distributional sense, and  the boundary and the initial conditions are understood as usual thanks to the regularity imposing condition on  the solutions.  
\end{remark}

\begin{remark} \rm It is worth noting that the solutions in the case $p=2$ are the same as the one obtained via the semigroup theory again by the approximation arguments and by the uniqueness in the semigroup setting. 
\end{remark}

\subsection{Proof of \Cref{thm-Lp}}

This section is devoted to the proof of \Cref{thm-Lp}.  One of the main ingredients of its proof is the following lemma. 
\begin{lemma}\label{lem-R} Assume $(A2)$ holds true. Let $1< p < + \infty$.  Consider the system
\be\label{lem-R-sys}
\left\{\begin{array}{c}
\xi_t - \xi_x = - q (\rho + \xi), \\
\rho_t + \rho_x = - q (\rho + \xi), \\
\rho (t, 0)+ \xi (t, 0) = h_1, \\
\rho (t, 1)+ \xi (t, 1) = h_2.  
\end{array}\right.
\ee
There exist positive constants $C_p,\alpha_p$
such that, for $\rho(0, \cdot), \xi(0, \cdot) \in L^p(0, 1)$ and $h_1, h_2 \in L^p(0, + \infty)$, it holds, for the corresponding solution of \eqref{lem-R-sys}, for every $t\geq 0$,  and  for $0 < \eps < 2$, 
\begin{equation}\label{lem-R-Ep}
\hat E_{p}(t) \le C_p e^{-\alpha_p t} \hat E_p(0) + \frac{C_p e^{\eps t}}{\eps^{p-1}} \int_0^t |(h_1, h_2)|^p \, dt. 
\end{equation}
where 
$$
\hat E_p (t) = \int_0^1 (|\rho(t, x)|^p + |\xi(t, x)|^p) \, dx.
$$
\end{lemma}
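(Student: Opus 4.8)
The plan is to use the linearity of \eqref{lem-R-sys} and split the solution into a part that transports the initial data under homogeneous boundary conditions and a part that responds to the boundary forcing from zero initial data. Write $(\rho,\xi)=(\rho^{\mathrm h},\xi^{\mathrm h})+(\rho^{\mathrm f},\xi^{\mathrm f})$, where $(\rho^{\mathrm h},\xi^{\mathrm h})$ solves \eqref{lem-R-sys} with $h_1=h_2=0$ and initial data $(\rho(0,\cdot),\xi(0,\cdot))$, and $(\rho^{\mathrm f},\xi^{\mathrm f})$ solves it with the given $h_1,h_2$ and zero initial data. Since $|a+b|^p\le 2^{p-1}(|a|^p+|b|^p)$, one gets $\hat E_p(t)\le 2^{p-1}\big(\hat E_p^{\mathrm h}(t)+\hat E_p^{\mathrm f}(t)\big)$ with the obvious notation, and $\hat E_p^{\mathrm h}(0)=\hat E_p(0)$; it then suffices to bound the two pieces separately.

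For the homogeneous piece I would establish the exponential decay $\hat E_p^{\mathrm h}(t)\le C_p e^{-\alpha_p t}\hat E_p(0)$. Here $\rho^{\mathrm h}$ is right-moving and $\xi^{\mathrm h}$ is left-moving, and the relations $\rho+\xi=0$ at $x=0,1$ act as reflections with coefficient $-1$; thus the method of characteristics turns the homogeneous problem into transport along a billiard flow, coupled only through the source $-q(\rho^{\mathrm h}+\xi^{\mathrm h})$. Following the argument of \cite{chitour2024exponential}, the heart of the matter is an observability-type estimate over a fixed window $T_0$: because $q\ge\underline q>0$ on $\omega$ by Assumption (A2), every characteristic meets $\omega$ within time $T_0$ and loses a definite fraction of its $L^p$ amplitude there, while the reflections are $L^p$ isometries. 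This yields a contraction $\|(\rho^{\mathrm h},\xi^{\mathrm h})(T_0)\|_{L^p}\le\theta\,\|(\rho^{\mathrm h},\xi^{\mathrm h})(0)\|_{L^p}$ with $\theta<1$; iterating over successive windows gives the stated decay and, in particular, shows that the homogeneous solution operator $S(t)$ is uniformly bounded on $L^p(0,1)^2$.

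For the forced piece I would use a Duhamel superposition over the unit time intervals $I_k=[k,k+1]$ (the last one truncated at $t$). By causality and linearity, the boundary data restricted to $I_k$ produces, from a zero state at $t=k$, a state $g_k$ at $t=k+1$ with $\|g_k\|_{L^p}\le C\,\|(h_1,h_2)\|_{L^p(I_k)}$ by the well-posedness estimate of \Cref{pro-R1}, and the full forced solution is $\sum_k S(t-k-1)g_k$. Using the uniform boundedness of $S$ this gives $\|(\rho^{\mathrm f},\xi^{\mathrm f})(t)\|_{L^p}\le C\sum_{k=0}^{\lfloor t\rfloor}\|(h_1,h_2)\|_{L^p(I_k)}$. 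The prefactor in \eqref{lem-R-Ep} is then produced by a Hölder inequality with the weight $e^{\eps k/p}$: with $p'=p/(p-1)$,
\begin{multline*}
\sum_{k=0}^{\lfloor t\rfloor}\|(h_1,h_2)\|_{L^p(I_k)}\le\Big(\sum_{k\ge 0}e^{-\eps k/(p-1)}\Big)^{1/p'}\Big(\sum_{k=0}^{\lfloor t\rfloor}e^{\eps k}\|(h_1,h_2)\|_{L^p(I_k)}^{p}\Big)^{1/p}\\
\le C_p\,\eps^{-(p-1)/p}\,e^{\eps t/p}\Big(\int_0^t|(h_1,h_2)|^p\,d\tau\Big)^{1/p},
\end{multline*}
where I used $\sum_{k\ge 0}e^{-\eps k/(p-1)}\le C_p/\eps$ for $\eps\in(0,2)$, $e^{\eps k}\le e^{\eps t}$ for $k\le t$, and $\sum_k\|(h_1,h_2)\|_{L^p(I_k)}^p=\int_0^t|(h_1,h_2)|^p$. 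Raising to the power $p$ gives $\hat E_p^{\mathrm f}(t)\le C_p\,\eps^{-(p-1)}e^{\eps t}\int_0^t|(h_1,h_2)|^p$, and combining with the homogeneous bound yields \eqref{lem-R-Ep}.

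I expect the main obstacle to be the homogeneous $L^p$ decay. The clean dissipation identity \eqref{ELocdot} is special to $p=2$, and the $L^p$ energy is not known to be monotone for $p\neq 2$, so no Lyapunov/multiplier argument of the type used in \Cref{sect-L2} is available; the decay must be read off along the characteristics. The delicate point, carried out in \cite{chitour2024exponential}, is to quantify the amplitude a characteristic loses as it crosses $\omega$ and to control the transfer between $\rho$ and $\xi$ through the coupling source $-q(\rho+\xi)$ over a full period. By contrast, the forced estimate is essentially bookkeeping once $S$ is known to be uniformly bounded (and exponentially stable): the weight $e^{\eps k/p}$ and the resulting factor $e^{\eps t}/\eps^{p-1}$ merely record the mild time growth with a free parameter $\eps$, which is precisely what makes the estimate usable in the proof of \Cref{thm-Lp}, where the traces $h_1,h_2$ (the values of $\rho+\xi=2u_t$ at the endpoints, governed by $\zeta_1$ and $\eta_1$) will themselves decay.
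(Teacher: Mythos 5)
Your argument is correct, but for the boundary-forced part it takes a genuinely different route from the paper. Both proofs handle the homogeneous part identically, by invoking \cite[Theorem 4.1]{chitour2024exponential} (your characteristics-lose-amplitude sketch glosses over the $\rho$--$\xi$ coupling through $-q(\rho+\xi)$, but you acknowledge this and defer to the same reference the paper cites, so no gap there), and both reduce to the complementary case by linearity. For the forced part, however, the paper never leaves the PDE: it multiplies the equations by $\rho|\rho|^{p-2}$ and $\xi|\xi|^{p-2}$, absorbs the incoming boundary traces via the elementary inequality of \Cref{lem-ineq} (the source of its factor $\eps^{-(p-1)}$), converts the outgoing traces $|\rho(s,1)|^p+|\xi(s,0)|^p$ into interior norms by integrating along characteristics, and closes with a Gronwall argument on $f(t)=\int_0^t\hat E_p(s)\,ds$, the factor $e^{\eps t}$ coming from Gronwall together with the monotonicity of $g(t)=\int_0^t|(h_1,h_2)|^p$. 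You instead run an input-to-state, semigroup-style superposition: block the boundary data over unit intervals, bound each block's state by \Cref{pro-R1}, propagate with the homogeneous solution operator (uniformly bounded thanks to the cited decay), and recover the factor $e^{\eps t}/\eps^{p-1}$ from the weighted H\"older inequality and the geometric series $\sum_{k\ge 0}e^{-\eps k/(p-1)}\le C_p/\eps$; your bookkeeping here (truncated last interval, $p'/p=1/(p-1)$, the $e^{\eps k}\le e^{\eps t}$ step) is correct. What each approach buys: the paper's forced estimate is self-contained --- it uses neither the homogeneous decay nor any semigroup formalism for that part --- whereas yours is modular and actually stronger in potential: replacing uniform boundedness by the decay $\|S(\tau)\|\le C e^{-\alpha_p\tau/p}$ in your sum would yield a growth-free convolution bound
\begin{equation*}
\hat E_{p}^{\mathrm f}(t)\;\le\; C_p\int_0^t e^{-\alpha_p'(t-s)}\,|(h_1,h_2)(s)|^p\,ds,
\end{equation*}
which implies \eqref{lem-R-Ep} and dispenses with the free parameter $\eps$ altogether. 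The only steps you should make explicit are those licensing the superposition --- linearity and uniqueness of broad solutions (\Cref{pro-R1}) and time-invariance of the system (since $q=q(x)$), so that each block, once its boundary data switches off, evolves under the homogeneous solution operator --- and these are routine.
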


\begin{proof} We will only consider smooth solutions $(\rho, \xi)$ \footnote{We thus assume that $q$ is smooth. Nevertheless, concerning $q$, the constants in the estimates will depend only on $\| q\|_{L^\infty}$}. The general case will follow by regularizing arguments.

The conclusion is a consequence of \cite[Theorem 4.1]{chitour2024exponential} when $h_1 \equiv 0$ and $h_2 \equiv 0$.  Since the system is linear, it then suffices to consider the case 
\be \label{thm-Lp-u0u1}
(u_1, u_0) = (0, 0), 
\ee
this will be assumed later on.  Multiplying the equation of $\xi$ with $\xi |\xi|^{p-2}$ and the equation of  $\rho$ with $\rho |\rho|^{p-2}$, and integrating the expressions with respect to $x$, after using the boundary conditions, we obtain, for $t > 0$,
\begin{multline*}
\frac{1}{p} \frac{d}{dt} \int_0^1 (|\rho(t, x)|^p + |\xi (t, x)|^p) \, dx + \frac{1}{2} \int_0^1 q (\rho - \xi) (\rho|\rho|^{p-2} - \xi |\xi|^{p-2}) (t, x) \, dx  \\[6pt]
=  - \frac{1}{p} \Big(|\rho(1, t)|^p - |\rho(0, t)|^p \Big) + \frac{1}{p} \Big(|\xi(1, t)|^p - |\xi(0, t)|^p \Big).
\end{multline*}
Using the boundary condition, we derive that  
\begin{multline*}
\frac{1}{p} \frac{d}{dt} \int_0^1 (|\rho(t, x)|^p + |\xi (t, x)|^p) \, dx + \frac{1}{2} \int_0^1 q (\rho - \xi) (\rho|\rho|^{p-2} - \xi |\xi|^{p-2}) (t, x) \, dx  \\[6pt]
=  -  \frac{1}{p} \Big(|\rho(1, t)|^p - |h_1(t) - \xi(0, t)|^p \Big) + \frac{1}{p} \Big(|h_2(t) - \rho(1, t)|^p - |\xi(0, t)|^p \Big).
\end{multline*}
Applying \Cref{lem-ineq} below, we obtain 
$$
\frac{d}{dt} \int_0^1 (|\rho(t, x)|^p + |\xi (t, x)|^p) \, dx \le    \eps \Big(|\rho(1, t)|^p+  |\xi(0, t)|^p \Big) + \frac{C}{\eps^{p-1}}  \Big(|h_1(t)|^p + |h_2(t)|^p \Big).
$$
This yields 
\be \label{thmH-energy}
\int_0^1 (|\rho(t, x)|^p + |\xi (t, x)|^p) \, dx  \\[6pt] 
\le  \frac{C}{\eps^{p-1}} \int_0^t  \Big(|h_1(s)|^p + |h_2(s)|^p \Big) \, ds +  \eps \int_0^t \Big(|\rho(s, 1)|^p+  \xi(s, 0)|^p \Big) \, ds. 
\ee
Using the characteristic, we have 
\be  \label{thmH-energy1}
 \int_0^t \Big(|\rho(s, 1)|^p+  \xi(s, 0)|^p \Big) \, ds \le C  \int_0^t \int_0^1 \Big(|\rho(s, x)|^p+  \xi(s, x)|^p \Big) \, ds \, dx. 
\ee
Set 
$$
f(t)= \int_0^t \int_0^1  (|\rho(s, x)|^p + |\xi (s, x)|^p) \, dx \, ds \quad \mbox{ and } \quad g(t) = \int_0^t  \Big(|h_1(s)|^p + |h_2(s)|^p \Big) \, ds. 
$$
Combining \eqref{thmH-energy} and  \label{thmH-energy1} yields   
$$
f'(t) \le \eps f(t) + \frac{C}{\eps^{p-1}} g(t). 
$$
Since $f(0) = 0$ by \eqref{thm-Lp-u0u1}, it follows  by Gronwall's inequality that, for $t \ge 0$,  
$$
f(t) \le \frac{C}{\eps^{p-1}} \int_0^t e^{\eps (t - s)} g(s) \, ds. 
$$
Since $g$ is increasing, we derive that, for $t \ge 0$,  
$$
\int_0^t e^{\eps (t - s)} g(s) \, ds \le \frac{1}{\eps} g(t) e^{\eps t}. 
$$
Thus 
$$
\eps f(t) \le \frac{C e^{\eps t}}{\eps^{p-1}} g(t).  
$$
The conclusion follows in the case $(u_1, u_0) = (0, 0)$. 
\end{proof}

In the proof of \Cref{lem-R}, we used the following inequality.

\begin{lemma} \label{lem-ineq} Let $1< p<+ \infty$.  There exists $C > 0$ depending only on $p$ such that for $0<\eps < 2$, 
\be \label{lem-ineq-cl2}
|a + b|^p \le (1 + \eps) |a|^p +  \frac{C}{\eps^{p-1}} |b|^p \quad \mbox{ for all } a, b \in \mR.  
\ee
\end{lemma}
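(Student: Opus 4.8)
The plan is to reduce to nonnegative $a,b$ and then invoke the convexity of $t\mapsto t^p$. Since $|a+b|\le |a|+|b|$ and $t\mapsto t^p$ is nondecreasing on $[0,\infty)$ for $p\ge 1$, we have $|a+b|^p\le (|a|+|b|)^p$; replacing $(a,b)$ by $(|a|,|b|)$ it therefore suffices to establish
\[
(a+b)^p\le (1+\eps)\,a^p+C\eps^{1-p}\,b^p\qquad\text{for all }a,b\ge 0.
\]

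Next, for any $\lambda\in(0,1)$ I would write $a+b=\lambda\cdot\frac{a}{\lambda}+(1-\lambda)\cdot\frac{b}{1-\lambda}$ and apply Jensen's inequality to the convex function $t\mapsto t^p$, obtaining
\[
(a+b)^p\le \lambda\Big(\frac{a}{\lambda}\Big)^p+(1-\lambda)\Big(\frac{b}{1-\lambda}\Big)^p=\lambda^{1-p}a^p+(1-\lambda)^{1-p}b^p.
\]
I then choose $\lambda=(1+\eps)^{-1/(p-1)}\in(0,1)$, so that $\lambda^{1-p}=1+\eps$ exactly, and the inequality reduces to
\[
(a+b)^p\le(1+\eps)\,a^p+(1-\lambda)^{1-p}b^p.
\]
It thus remains only to bound the coefficient $(1-\lambda)^{1-p}$ by $C\eps^{1-p}$ with $C=C(p)$.

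This last estimate is the only nontrivial point, and it amounts to the lower bound $1-(1+\eps)^{-1/(p-1)}\ge c\,\eps$ for all $\eps\in(0,2)$, with $c=c(p)>0$. I would obtain it from the mean value theorem: the function $f(\eps)=1-(1+\eps)^{-1/(p-1)}$ satisfies $f(0)=0$ and $f'(\eps)=\frac{1}{p-1}(1+\eps)^{-p/(p-1)}\ge \frac{1}{p-1}\,3^{-p/(p-1)}=:c$ on $[0,2]$, whence $f(\eps)\ge c\eps$. Since $1-p<0$, this gives $(1-\lambda)^{1-p}=f(\eps)^{1-p}\le c^{1-p}\eps^{1-p}$, so $C=c^{1-p}$ works; the degenerate case $a=0$ is automatically covered because $(1-\lambda)^{1-p}\ge 1$ forces $C\eps^{1-p}\ge 1$.

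The whole argument is elementary; the only care needed is to keep the constant $c$ (hence $C$) uniform over the full range $\eps\in(0,2)$ rather than merely for small $\eps$, which is exactly what the global lower bound on $f'$ provides. An alternative route leading to the same conclusion is to start from $(a+b)^p\le a^p+pb(a+b)^{p-1}$ and apply Young's inequality with exponents $p$ and $p/(p-1)$ to the cross term, optimizing the Young parameter; the convexity argument above is cleaner and I would prefer it.
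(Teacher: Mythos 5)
Your proof is correct, but it takes a genuinely different route from the paper's. After the same harmless reduction to $a,b\ge 0$, you apply the weighted convexity inequality $(a+b)^p\le \lambda^{1-p}a^p+(1-\lambda)^{1-p}b^p$ and choose $\lambda=(1+\eps)^{-1/(p-1)}$ so that the coefficient of $a^p$ is exactly $1+\eps$; the whole lemma then collapses to the one-variable estimate $1-(1+\eps)^{-1/(p-1)}\ge c(p)\,\eps$ on $(0,2)$, which your uniform lower bound $f'(\eps)\ge \frac{1}{p-1}\,3^{-p/(p-1)}$ on $[0,2]$ delivers. The paper instead normalizes to $a=1$ by homogeneity, disposes of $b\le 0$ and $b\ge 1$ by inspection, and for $0\le b\le 1$ bounds $(1+b)^p-1\le p\,2^{p-1}b$ by integrating the derivative, then splits this linear term with Young's inequality (exponents $p$ and $p/(p-1)$) weighted by powers of $\eps$ --- essentially the ``alternative route'' you sketch in your final paragraph. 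Your argument buys a case-free proof, an explicit and essentially optimal coefficient $K(\eps)=\bigl(1-(1+\eps)^{-1/(p-1)}\bigr)^{1-p}$ before the final crude bound, and a transparent view of where the hypothesis $\eps<2$ enters (only through the lower bound on $f'$, so the same argument works on any bounded $\eps$-range); the paper's proof is comparably elementary but produces a looser constant through its case analysis. One tiny remark: your closing sentence about the degenerate case $a=0$ is superfluous, since the convexity step already holds verbatim when $a=0$ (the term $\lambda(a/\lambda)^p$ simply vanishes).
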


\begin{proof} The conclusion follows easily in the case $a=0$. We next assume that $a \neq 0$. By normalizing, without loss of generality, one might assume that $a =1$. One thus needs to prove 
\be \label{lem-ineq-p1}
|1 + b|^p \le (1 + \eps)  +  \frac{C}{\eps^q} |b|^p \quad \mbox{ for all } b \in \mR.  
\ee
The inequality is clear in the case $b \le 0$ since 
$$
|1 + b|\le \max\{1, |b|\} \mbox{ for } b \le 0. 
$$

It remains to deal with the case $0 \le b \le 1$ since the conclusion is clear for $b \ge 1$.

Set 
$$
f(t) =(1 + t)^p \mbox{ for } t \ge 0. 
$$
We have 
$$
f'(t) = p (1 + t)^{p-1}. 
$$
It follows that, for $0 \le b \le 1$,  
$$
(1+ b)^p - 1 = f(b) - f(0) = \int_0^b p (1 + t)^{p-1} \, dt \le p b 2^{p-1} = \frac{p b 2^{p-1}}{ \eps^{\frac{p-1}{p}}} \eps^{\frac{p-1}{p}} 
$$
Applying Young's inequality, we obtain, for $0 \le b \le 1$,  
$$
(1+ b)^p - 1  \le \frac{1}{p} \frac{p^b b^p 2^{p(p-1)}}{\eps^{p-1}} +   \frac{p-1}{p} \eps. 
$$
The conclusion then follows in the case $0 \le b \le 1$. The proof is complete. 
\end{proof}

We are ready to give the proof of \Cref{thm-Lp}. 

\begin{proof}[Proof of \Cref{thm-Lp}]

Applying the result in the case $p=2$, we have 
$$
E_{b, 2} \le C e^{-\alpha t} E(0). 
$$
This implies
$$
|\eta_1(t)|^2 + |\eta_2(t)|^2 + |\zeta_1(t)|^2 \le C e^{-\alpha t} \left( \int_0^1 |u_0(x)|^2 + |u_1(x)|^2 + |\eta_{1, 0}|^2 + |\eta_{2, 0}|^2 + |\xi_{1, 0}|^2 \right). 
$$
Since $p > 2$, we derive that 
\be \label{thm-Lp-p1}
|\eta_1(t)|^p + |\eta_2(t)|^p + |\zeta_1(t)|^p \le C e^{- \beta t} E_p(0)  
\ee
for some $\beta > 0$ and $C > 0$ independent of $t$ and the initial data. 

By \Cref{lem-R}, there exist $C > 0$ and $\alpha > 0$ such that,  for $\eps > 0$, and $t > 0$, 
\begin{multline}
\int_0^1 \Big( |u_t(t, x)|^p + |u_{x}(t, x)|^p \Big) \, dx  \\[6pt]
\le C e^{-\gamma t}  \int_0^1 \Big( |u_t(0, x)|^p + |u_{x}(0, x)|^p \Big) \, dx + C \frac{e^{\eps t}}{\eps^{p-1}} \int_0^t |(\eta_1, \zeta_1)(s)|^p \, ds. 
\end{multline}
This implies  
\begin{multline*}
\int_0^1 \Big( |u_t(2t, x)|^p + |u_{x}(2t, x)|^p \Big) \, dx  \\[6pt] \le C e^{-\gamma t}  \int_0^1 \Big( |u_t(t, x)|^p + |u_{x}(t, x)|^p \Big) \, dx + C \frac{e^{\eps t}}{\eps^{p-1}} \int_t^{2t} |(\eta_1, \zeta_1)(s)|^p \, ds \\[6pt]
\le C e^{-\gamma t} \Big( C e^{-\gamma t}  \int_0^1 \Big( |u_t(0, x)|^p + |u_{x}(0, x)|^p \Big) \, dx + C \frac{e^{\eps t}}{\eps^{p-1}} \int_0^t |(\eta_1, \zeta_1)(s)|^p \, ds \Big) \\[6pt] + C \frac{e^{\eps t}}{\eps^{p-1}} \int_t^{2t} |(\eta_1, \zeta_1)(s)|^p \, ds. 
\end{multline*}
We thus obtain, for $t \ge 0$,  
\begin{multline} \label{thm-Lp-p2}
\int_0^1 \Big( |u_t(2t, x)|^p + |u_{x}(2t, x)|^p \Big) \, dx  
\le C e^{- 2 \gamma t} \int_0^1 \Big( |u_t(0, x)|^p + |u_{x}(0, x)|^p \Big) \, dx \\[6pt] + C e^{-\gamma t} \frac{e^{\eps t}}{\eps^{p-1}} \int_0^t |(\eta_1, \zeta_1)(s)|^p \, ds  + C  \frac{e^{\eps t}}{\eps^{p-1}} \int_t^{2t} |(\eta_1, \zeta_1)(s)|^p \, ds. 
\end{multline}
Using \eqref{thm-Lp-p1}, we derive from \eqref{thm-Lp-p2} that 
\begin{multline}
\int_0^1 \Big( |u_t(2t, x)|^p + |u_{x}(2t, x)|^p \Big) \, dx \le C e^{-2 \gamma t} E_p(0) +  \frac{C}{\eps^{p-1}} e^{-\gamma t + \eps t} E_{p}(0) + C \frac{e^{\eps t }}{\eps^{p-1}} e^{-\beta t} E_p(0) . 
\end{multline}
We thus obtain 
\be \label{thm-Lp-p3}
\int_0^1 \Big( |u_t(2t, x)|^p + |u_{x}(2t, x)|^p \Big) \, dx \le C \Big( e^{-2 \gamma t} + \frac{1}{\eps^{p-1}} e^{-\gamma t + \eps t} + \frac{e^{\eps t }}{\eps^{p-1}} e^{-\beta t}  \Big) E_p(0). 
\ee

Let $\delta_0 > 0$ be a small positive constant defined later. Set 
$$
\eps_0 = \min\{\gamma /2, \beta/ 2, 1\}, 
$$
and fix $T > 0$ such that 
$$
\frac{1}{\eps_0^p} e^{- \eps_0 T} \le \delta_0. 
$$ 
We then derive from \eqref{thm-Lp-p1} and \eqref{thm-Lp-p3} with $\eps = \eps_0$ that 
$$
|\eta_1(2T)|^p + |\eta_2(2T)|^p + |\zeta_1(2T)|^p + \int_0^1 \Big( |u_t(2 T, x)|^p + |u_{x}(2 T, x)|^p \Big) \, dx \le C \delta_0 E_p(0). 
$$
In other words, 
\be
E_p(2T) \le C \delta_0 E_p(0). 
\ee

By choosing $\delta_0$ such that $C \delta_0 < 1/2$, we thus obtain 
\be
E_p(2T) \le \frac{1}{2} E_p(0). 
\ee
The conclusion follows. 
\end{proof}

\addcontentsline{toc}{section}{Acknowledgment} 
\section*{Acknowledgment}
Part of this work was conducted during a visit by Abdelhakim Dahmani to the University of Paris-Saclay. This visit was supported by the EMBRACE Center for International Researchers at Aalen University, a project within the HAW International program of the German Academic Exchange Service (DAAD), funded by the German Federal Ministry of Education and Research (BMBF).

\addcontentsline{toc}{section}{References} 
\bibliography{bib}
\bibliographystyle{abbrv}
\end{document}